\documentclass{article}

\usepackage[utf8]{inputenc}     
\usepackage[english]{babel}     
\usepackage[a4paper, left=1.5in, right=1.5in, top=1.5in, bottom=1.5in]{geometry}

\usepackage{color,hyperref}
\usepackage{titlesec}
\setcounter{secnumdepth}{4}

\titleformat{\paragraph}
{\normalfont\normalsize\bfseries}{\theparagraph}{1em}{}
\titlespacing*{\paragraph}
{0pt}{3.25ex plus 1ex minus .2ex}{1.5ex plus .2ex}

\usepackage{tikz}
\usetikzlibrary{arrows,positioning,shapes.geometric}
\usepackage[useregional]{datetime2}             
             
\usepackage{graphicx}           
\usepackage{amsmath,amssymb}    
\usepackage{amsthm}             
\usepackage{amsmath}
\usepackage{mathtools}          
\usepackage{enumitem}           
\usepackage{listings}           
\usepackage{todonotes}          
\usepackage{hyperref}  
\usepackage{bbold}

\newtheorem{theorem}{Theorem}[section]

\newtheorem{prop}[theorem]{Proposition}
\newtheorem{lemma}[theorem]{Lemma}

\newtheorem{remark}{Remark}
\title{Random multiplicative functions and typical size of character in short intervals}
\author{Rachid Caich }
\date{\today}

\begin{document}

\maketitle
\begin{abstract}
We examine the conditions under which the sum of random multiplicative functions in short intervals, given by $\sum_{x<n \leqslant x+y} f(n)$, exhibits the phenomenon of \textit{better than square-root cancellation}. We establish that the point at which the square-root cancellation diminishes significantly is approximately when the ratio $\log\big(\frac{x}{y}\big)$ is around $\sqrt{\log\log x}$. By modeling characters by random multiplicative functions, we give a sharp bound of $\frac{1}{r-1}\sum_{\chi \!\!\!\mod r} \big|\sum_{x<n\leqslant x+y}\chi(n)\big|$, where $r$ is a large prime and $x+y\leqslant r $. This extends the result of Harper \cite{Harper_charac}.\\

   \noindent\textbf{Keywords:} Random multiplicative functions, mean values of multiplicative functions, Rademacher functions, Steinhaus  functions, martingales, multiplicative chaos, Dirichlet character.\\
   \textbf{2000 Mathematics Subject Classification:} 11N37, (11K99, 60F15).
\end{abstract}

\section{Introduction}
The aim of this article is to investigate the typical size of random multiplicative functions in short intervals. Recent years have witnessed significant interest directed towards the study of random multiplicative functions. Within the realm of number theory and probability, two prominent models of random multiplicative functions have gained considerable attention. 

\noindent Let $\mathcal{P}$ be the set of prime numbers. One of these models is referred to as the \textit{Steinhaus random multiplicative function}. It is constructed by defining $(f(p))_{p\in \mathcal{P}}$ as a sequence of independent Steinhaus random variables, which are uniformly distributed on the unit circle $|z|=1$. Subsequently, $f(n)$ is defined for each $n \in \mathbb{N}$ as follows: $f(n):=\prod_{p^{a}||n}f(p)^{a}$, where $p^{a}||n$ indicates that $p^{a}$ is the highest power of the prime $p$ dividing $n$.

\noindent Another model is the \textit{Rademacher multiplicative function}, obtained by letting $(f(p))_{p\in \mathcal{P}}$ be independent Rademacher random variables, which take values $\pm 1$ with equal probability of $\tfrac{1}{2}$ each, and we set
$$
f(n)=\left\{
  \begin{array}{@{}ll@{}}
    \prod_{p |n} f(n) , & \text{if}\ n \text{ is squarefree} \\
    0, & \text{otherwise.}
  \end{array}\right.
$$
The Rademacher model was introduced by Wintner \cite{Wintner} in 1944 as a heuristic model for the Möbius function $\mu$ (see the introduction \cite{Tenenbaum}). With a slight modification, one can derive a probabilistic model for a real primitive Dirichlet character (see Granville--Soundararajan \cite{GranvilleSound2}). The Steinhaus random multiplicative functions serve as models for randomly chosen Dirichlet characters $\chi$ or continuous characters $n \mapsto n^{it}$; for more details, refer to section 2 of Granville--Soundararajan \cite{GranvilleSound} and Harper \cite{Harper_charac}. 

\noindent Given such a random multiplicative function, an important goal
is to understand the partial sums $\sum_{n \leqslant x} f(n)$. Since $\mathbb{E}\big[f(n)\overline{f(m)}\big]=\mathbb{1}_{n=m}$, we have then 
$$
\mathbb{E}\bigg[\bigg|\sum_{n \leqslant x}f(n)\bigg|^2\bigg] \asymp x.
$$
One of the most remarkable results in the study of random multiplicative functions can arguably be attributed to Harper's work \cite{Harper}, which is: when $x \to +\infty$
$$
\mathbb{E}\bigg[ \bigg| \sum_{n\leqslant x}f(n)\bigg| \bigg] \asymp \frac{\sqrt{x}}{(\log_2 x)^{1/4}}.
$$
Here and in the sequel $\log_k$ denotes the k-fold iterated logarithm. Harper's theorem established Helson's conjecture \cite{helson_conj}, which says that the partial sums of random multiplicative functions generally manifest more than square-root cancellation than what is expected from square-roots. The veracity of Helson's conjecture appeared considerably uncertain during that period. In fact, prior research conducted by Harper, Nikeghbali, and Radziwiłł \cite{HNR} had suggested the opposite  Helson's conjecture.

\noindent More recently, Harper \cite{Harper_charac} proved an interesting
result about character sums, in fact, he proved, if $x$ and $r/x$ tend to infinity with $r$, then 
$$
\frac{1}{r-1}\sum_{\chi\!\!\!\!\! \mod r} \bigg|\sum_{n\leqslant x}\chi(n)\bigg| = o(\sqrt{x}).
$$
The proof relies on establishing a link between the sum of characters and the sum of Steinhaus multiplicative functions. After this connection is established, the phenomenon of \textit{better than square-root cancellation} arises.

\noindent The aim of this paper is to study the typical size of random multiplicative functions in short intervals $]x,x+y]$. In suitable ranges, we shall establish that the sum of a random multiplicative function has more than square-root cancellation. In a prior work by Chatterjee and Soundararajan \cite{sound_chat}, they established, in the case of Rademacher functions, that when $y=o\big(\frac{x}{\log x}\big)$, the sum $\sum_{x<n\leqslant x+y}f(n)$ satisfies to a central limit theorem. More recently, Soundararajan and Xu \cite{Sound_Xu} improved this result \cite{sound_chat} to a range $y\leqslant \frac{x}{(\log x)^{\alpha+\varepsilon}}$ where $\alpha=2\log 2 -1$ for any $\varepsilon>0$. 
Thus, in this range, one can easily deduce that when $y$ tends to infinity, we have for any $0 \leqslant  q\leqslant 1$
$$
\mathbb{E}\bigg[\frac{1}{(A(x,y))^{q}}\bigg|\sum_{x<n\leqslant x+y}f(n)\bigg|^{2q}\bigg]\sim \frac{\Gamma(\frac{2q+1}{2})2^{\frac{2q+1}{2}}}{\sqrt{2\pi}}
$$
where $A(x,y):= \mathbb{E}\bigg[\bigg|\sum_{x<n\leqslant x+y}f(n)\bigg|^{2}\bigg]$ in particular $A(x,y)$ is equal to $y +O(1)$ in Steinhaus case. In Rademacher case $A(x,y)$ is equal to $ |\{ n\in [x,x+y]: \,\, n \,\,\,\text{ square-free integer} \}|$. Alternatively, according to Harper's \cite{Harper}, it is proven that the probability distribution $\sum_{x<n\leqslant x+y}f(n)$ deviates from a Gaussian distribution for $y \geqslant \frac{x}{(\log_2 x)^{1/2-\varepsilon }}$.

\noindent For the typical size problem, Xu \cite{Xu} had investigated the phenomena of the better than square-root cancellation when the sum is restricted to $n\leqslant x$ of $R$ rough numbers. For $\mathcal{A}_R(x)=\{ n\leqslant x : p|n \implies p >R\}$, he showed that  $$
\mathbb{E}\bigg[ \frac{1}{\sqrt{\mathcal{A}_R(x)}}\bigg|\sum_{n\in \mathcal{A}_R(x)}f(n)\bigg|\bigg]=o(1)
$$
when $\log_2 R =o(\sqrt{\log_2 x})$ and when $\log_2 R \gg \sqrt{\log_2 x}$
and 
 $$
\mathbb{E}\bigg[\frac{1}{\sqrt{\mathcal{A}_R(x)}}\bigg|\sum_{n\in \mathcal{A}_R(x)}f(n)\bigg|\bigg]\asymp 1,
$$
which means that $ R \approx \sqrt{\log\log x}$ is the threshold.

\noindent In our case : $\sum_{x< n\leqslant x+y}f(n)$, we anticipate observing the following behavior at certain thresholds. When the value of $y$ is sufficiently large, the range under consideration approximates $[x,2x]$ and since $\sum_{x< n\leqslant 2x} f(n) = \sum_{n\leqslant 2x}f(n) -\sum_{n\leqslant x}f(n)$, the better than square-root cancellation appears as done by Harper \cite{Harper}.  On the other hand, when $y$ takes on sufficiently small values, the presence of weak dependence might even lead to the emergence of a central limit theorem. This phenomenon is explored, as mentioned before, in the works of Chatterjee and Soundararajan as \cite{sound_chat} and Soundararajan and Xu \cite{Sound_Xu}. Our results in Theorem~\ref{theoreme_principal_1} and Theorem \ref{theoreme_principal_2} provide a conclusive response to this inquiry. We establish that the threshold for this behavior is characterized by $\log\big(\frac{x}{y}\big) \approx \sqrt{\log_2 x}$.
Define 
\begin{equation}\label{def_M}
    M(x,y;f):= \sum_{x<n\leqslant x+y}f(n).
\end{equation}
The main results of this paper are the following three theorems.
\begin{theorem}\label{theoreme_principal_1}
Let $f$ be a Rademacher or Steinhaus random multiplicative function. Let $y\leqslant x $ large numbers, we define $\theta$ by $\frac{x}{y}=\log^{\theta} x$. We have uniformly for $\theta > 0$ and $ 0\leqslant q \leqslant 1$
\begin{equation*}
    \mathbb{E}\bigg[ \big| M(x,y;f)\big|^{2q}\bigg] \ll \bigg( y \min\bigg\{ 1, \theta \sqrt{\log_2 x} + \frac{1}{(1-q)\sqrt{\log_2 x}}\bigg\} \bigg)^q.
\end{equation*}
In particular for $\theta = o(\frac{1}{\sqrt{\log_2 x}})$, we have 
$$
 \mathbb{E}\bigg[ \big|M(x,y;f)\big|\bigg] =o(\sqrt{y}).
$$
\end{theorem}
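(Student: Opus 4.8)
The plan is to adapt Harper's treatment of $\sum_{n\le x}f(n)$ in \cite{Harper, Harper_charac}: reduce the moment to a $q$-th moment of a critical multiplicative chaos attached to the interval $]x,x+y]$, and then run the relevant ballot/barrier estimate with the width $\log(x/y)$ of the short interval built in.

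\emph{Step 1: trivial bound and reduction of the range of $(\theta,q)$.} By Jensen's inequality and the concavity of $t\mapsto t^{q}$ on $[0,\infty)$,
\[
\mathbb{E}\big[|M(x,y;f)|^{2q}\big]\;\leqslant\;\big(\mathbb{E}\big[|M(x,y;f)|^{2}\big]\big)^{q}\;=\;A(x,y)^{q}\;\ll\;y^{q},
\]
where $A(x,y)=y+O(1)$ (Steinhaus) and $A(x,y)\leqslant y$ (Rademacher). This already proves the theorem whenever $\theta\sqrt{\log_2 x}+\frac{1}{(1-q)\sqrt{\log_2 x}}\geqslant1$, in particular whenever $\theta\geqslant\frac{1}{\sqrt{\log_2 x}}$ or $1-q\leqslant\frac{1}{\sqrt{\log_2 x}}$. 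Hence one may assume $\theta\sqrt{\log_2 x}+\frac{1}{(1-q)\sqrt{\log_2 x}}<1$, so that in particular $\theta<\frac{1}{\sqrt{\log_2 x}}$, $1-q>\frac{1}{\sqrt{\log_2 x}}$, and $T:=x/y=\log^{\theta}x$ satisfies $1<T<\exp(\sqrt{\log_2 x})$; it then remains to prove $\mathbb{E}[|M(x,y;f)|^{2q}]\ll\big(y\,(\theta\sqrt{\log_2 x}+\tfrac{1}{(1-q)\sqrt{\log_2 x}})\big)^{q}$. The last displayed assertion of the theorem is the special case $q=\tfrac12$, since for $\theta=o(1/\sqrt{\log_2 x})$ both $\theta\sqrt{\log_2 x}$ and $\tfrac{2}{\sqrt{\log_2 x}}$ tend to $0$.

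\emph{Step 2: conditioning on small primes and passage to a multiplicative chaos.} Condition on the $\sigma$-algebra generated by $(f(p))_{p\leqslant\sqrt{x}}$ and split $M(x,y;f)$ according to the largest prime factor: each $n\in]x,x+y]$ with $P^{+}(n)=p>\sqrt{x}$ factors as $n=pm$ with $m<\sqrt{x}$, hence
\[
M(x,y;f)\;=\;\sum_{\substack{x<n\leqslant x+y\\ P^{+}(n)\leqslant\sqrt{x}}}f(n)\;+\;\sum_{\sqrt{x}<p\leqslant x+y}f(p)\,M\!\big(\tfrac{x}{p},\tfrac{y}{p};f\big),
\]
the second sum being, conditionally, a sum of independent mean-zero terms, whose conditional $2q$-th moment is $\asymp\big(\sum_{\sqrt{x}<p\leqslant x+y}|M(x/p,y/p;f)|^{2}\big)^{q}$ by Khintchine's inequality, while the first, $\sqrt{x}$-smooth, term is lower order and can be iterated or absorbed. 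Iterating this $\asymp\log_2 x$ times (thresholds $\sqrt x, x^{1/4},\dots$ down to $O(1)$, the ratio $x/y$ being preserved at each stage) converts the problem into a bound for the $q$-th moment of a multiplicative-chaos functional $Z=Z(x,x/y)$ with $\mathbb{E}[Z]\asymp1$, built from the random Euler product $F_x(s)=\prod_{p\le x}(1-f(p)p^{-s})^{-1}$ (resp.\ $\prod_{p\le x}(1+f(p)p^{-s})$ in the Rademacher case) and the geometry of $]x,x+y]$; equivalently, via Parseval as in \cite{Harper}, $Z$ is a suitably normalised version of $\int_{-T}^{T}|F_x(\tfrac12+it)|^{2}\,dt$, and the case $y\asymp x$ recovers Harper's partial-sum theorem.

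\emph{Step 3: the ballot estimate with the interval width built in, and the main obstacle.} It remains to show $\mathbb{E}[Z^{q}]\ll\big(\theta\sqrt{\log_2 x}+\tfrac{1}{(1-q)\sqrt{\log_2 x}}\big)^{q}$. Feeding the tower of conditionings into Harper's critical-chaos machinery, one decomposes the primes into $\asymp\log_2 x$ blocks on which the increments of $\sum_{p}\mathrm{Re}(f(p)p^{-it})/\sqrt p$ behave like Gaussian random-walk steps, and one needs a ballot-type estimate for the probability that these walks stay below a slowly growing barrier uniformly over the relevant ranges of $t$ and of the factorisation variable. Because the interval is short, that range has logarithmic width $\asymp\log(x/y)=\theta\log_2 x$ greater than in Harper's $y\asymp x$ case, which raises the effective barrier by $\asymp\theta\log_2 x$; since the probability that a walk of $\asymp\log_2 x$ steps stays below a barrier shifted up by $h$ is $\asymp h/\sqrt{\log_2 x}$, this is what yields the extra term $\theta\sqrt{\log_2 x}=\frac{\theta\log_2 x}{\sqrt{\log_2 x}}$ on top of Harper's $\frac{1}{(1-q)\sqrt{\log_2 x}}$; where $\theta\sqrt{\log_2 x}$ would exceed $1$ one simply uses Step 1. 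The hard part is precisely this last estimate: one cannot treat the $\asymp x/y$ unit sub-intervals of the $t$-range as independent and apply subadditivity $(\sum_j a_j)^q\le\sum_j a_j^q$, because that costs a factor $(x/y)^{1-q}$ which for $x/y$ up to $\exp(\sqrt{\log_2 x})$ destroys the saving; the ballot argument has to be run once over the whole width, and the various crude splittings (between $t$ near $0$ and $t$ away from $0$, between large and small prime factors, between the smooth and the chaotic parts) must be arranged so that the bound $\theta\sqrt{\log_2 x}$, rather than the cheaper but weaker $\theta\log_2 x$, is obtained uniformly in $\theta$ and $q$. The Rademacher case requires the usual care with squarefree support and signs of Euler factors, but no essentially new idea beyond \cite{Harper}.
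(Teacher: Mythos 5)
Your plan coincides with the paper's strategy (Harper's method, with the barrier raised by the width $\log(x/y)=\theta\log_2 x$ of the $t$-range and the ballot estimate converting this into $\theta\sqrt{\log_2 x}$), and your Step 1 reduction and the deduction of the ``in particular'' statement are fine. But as written this is an outline, not a proof: the two places where essentially all of the work lies are exactly the places you defer. In Step 2 the passage from $M(x,y;f)$ to an Euler-product average is only asserted. The paper does \emph{not} iterate the conditioning $\asymp\log_2 x$ times with the ratio $x/y$ preserved --- note that after one splitting the inner sums $M(x/p,y/p;f)$ run over intervals of length $y/p<y/\sqrt{x}$, typically containing at most one integer, so such an iteration does not make sense as stated. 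Instead one decomposes according to $P(n)\in(x_k,x_{k-1}]$ for only $K\asymp\log_3 x$ ranges, conditions once per range, smooths over the rough divisor $d$, and applies Parseval, which produces the kernel $\big|(1+1/\delta)^{1/2+it-k/\log x}-1\big|^2\big/\big|1/2+it-k/\log x\big|^2$ and forces the splitting of the $t$-integral into $|t|\leqslant\delta$, $\delta<|t|\leqslant\delta^5$, $|t|>\delta^5$; moreover the leftover $x_K$-smooth part needs a short-interval smooth-number estimate (valid because $\log(x/y)\ll\sqrt{\log_2 x}$ after your Step 1) and the prime part needs the prime number theorem --- none of this appears in your sketch beyond ``lower order and can be absorbed''.

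More seriously, Step 3 is precisely the new content of the theorem and you state yourself that it is ``the hard part'' without carrying it out. What is needed is: a barrier event $\mathcal{G}_q(k,t)$ for all $1/\sqrt{\log_2 x}<|t|\leqslant\delta^5$ whose failure probability is $\ll{\rm e}^{-2C/(1-q)}$, where the union bound over the $\asymp\delta\,{\rm e}^{j}j$ discretised values of $t$ is exactly what forces the barrier shift $5\theta\log_2 x$ (so the shift is a consequence of a counting argument, not merely of ``the range being wider''); a tilted-measure computation (the measure $\widetilde{\mathbb{P}}_t$ built from $|F|^2$, compared with a Gaussian walk via Berry--Esseen) giving $\widetilde{\mathbb{P}}_t[\mathcal{G}_q(k,t)]\ll\frac{C}{(1-q)\sqrt{\log_2 x}}+\theta\sqrt{\log_2 x}+\frac{D(t)}{\sqrt{\log_2 x}}$; and the iteration $q\mapsto q'=(1+q)/2$ with H\"older's inequality to control the contribution of the event failing without losing the saving, down to $1-q\asymp1/\sqrt{\log_2 x}$. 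You correctly identify that naive subadditivity over $\asymp x/y$ unit subintervals loses a factor $(x/y)^{1-q}$, but identifying the obstacle is not the same as overcoming it; since none of these estimates is proved or even precisely formulated, the proposal has a genuine gap at the heart of the argument.
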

\begin{theorem}\label{theoreme_principal_2}
 Let $f$ be a Rademacher or Steinhaus random multiplicative function and $x$ be large. For $  \sqrt{ \log_2 x} \ll \log (\frac{x}{y}) $, we have uniformly for $ 0\leqslant q\leqslant 1$
 $$
 \mathbb{E}\bigg[ \big|M(x,y;f)\big|^{2q}\bigg] \asymp y^q.
$$
\end{theorem}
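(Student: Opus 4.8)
The plan is to dispose of the upper bound in one line and to concentrate all the work on the lower bound, which I will first reduce to a single anticoncentration estimate.

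\emph{Upper bound and reduction.} By Jensen's inequality (concavity of $t\mapsto t^{q}$ for $q\in[0,1]$) and the orthogonality $\mathbb{E}[f(n)\overline{f(m)}]=\mathbb{1}_{n=m}$,
\[
\mathbb{E}\big[|M(x,y;f)|^{2q}\big]\ \le\ \big(\mathbb{E}\,|M(x,y;f)|^{2}\big)^{q}\ =\ A(x,y)^{q}\ \ll\ y^{q},
\]
uniformly for $q\in[0,1]$; this also follows from Theorem~\ref{theoreme_principal_1}, whose minimum equals $1$ once $\log(x/y)\gg\sqrt{\log_{2}x}$. So the whole content is the lower bound $\mathbb{E}\big[|M(x,y;f)|^{2q}\big]\gg y^{q}$. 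Since $|M|^{2q}\ge (cy)^{q}\mathbb{1}_{\{|M|^{2}\ge cy\}}$ and $(cy)^{q}\ge c\,y^{q}$ uniformly for $q\in[0,1]$ when $c\in(0,1)$, it suffices to prove
\[
\mathbb{P}\big(|M(x,y;f)|^{2}\ge c\,y\big)\ \gg\ 1
\]
for some absolute $c>0$ whenever $\log(x/y)\gg\sqrt{\log_{2}x}$ (with $x$ large; in the Rademacher case one also uses $A(x,y)\asymp y$, which for small $y$ rests on standard lower bounds for squarefrees in short intervals).

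\emph{The range $\log(x/y)\gtrsim\log_{2}x$: plain second moment.} The first attempt is the Paley--Zygmund inequality: if $\mathbb{E}\,|M|^{4}\ll y^{2}$ then, together with $\mathbb{E}\,|M|^{2}\asymp y$, one gets $\mathbb{P}(|M|^{2}\ge\tfrac12\mathbb{E}|M|^{2})\gg1$ and we are done. In the Steinhaus case $\mathbb{E}\,|M|^{4}$ counts $n_{1},n_{2},n_{3},n_{4}\in\,]x,x+y]$ with $n_{1}n_{2}=n_{3}n_{4}$ (Rademacher being analogous, with squarefree constraints and an extra multiplicative sign that only helps): the diagonal $\{n_{1},n_{2}\}=\{n_{3},n_{4}\}$ contributes $\asymp y^{2}$, while writing $g=\gcd(n_{1},n_{3})$, $n_{1}=ga$, $n_{3}=gb$ with $(a,b)=1$ (so $n_{2}=bc$, $n_{4}=ac$), a genuine off-diagonal solution forces $x/y<g\le y$, and summing over $g$ gives an off-diagonal contribution $\asymp (y^{3}/x)\log(y^{2}/x)$. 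Hence $\mathbb{E}\,|M|^{4}\ll y^{2}$ holds precisely when $(y/x)\log(y^{2}/x)\ll1$, i.e.\ when $\log(x/y)\ge\log_{2}x-O(1)$; in that sub-range the bare second moment already proves the theorem.

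\emph{The range $\sqrt{\log_{2}x}\ll\log(x/y)\lesssim\log_{2}x$: truncated second moment.} Here the unrestricted fourth moment is too large, the excess coming from a rare event on which a partial Euler product over the ``medium'' primes $p\in\,]x/y,y]$ is atypically big. The plan is to run Paley--Zygmund on a good event $\mathcal{G}$ on which a suitable truncation of that Euler product is of its expected size --- equivalently, on which the total mass of the associated multiplicative chaos is controlled: after a smoothed Perron/Parseval step this chaos is essentially $\int_{|t|\le x/y}|F_{z}(\tfrac12+it)|^{2}\,dt$ with $F_{z}(s)=\prod_{p\le z}(1-f(p)p^{-s})^{-1}$ (Steinhaus) for a well chosen truncation level $z$. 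One then needs (i) $\mathbb{P}(\mathcal{G})\gg1$; (ii) $\mathbb{E}\big[|M|^{2}\mathbb{1}_{\mathcal{G}}\big]\gg y$; (iii) $\mathbb{E}\big[|M|^{4}\mathbb{1}_{\mathcal{G}}\big]\ll y^{2}$, the restriction to $\mathcal{G}$ suppressing the offending off-diagonal. Granting these, Paley--Zygmund applied to $|M|^{2}\mathbb{1}_{\mathcal{G}}$ yields $\mathbb{P}(|M|^{2}\ge cy)\ge\mathbb{P}(|M|^{2}\ge cy,\ \mathcal{G})\gg1$, finishing the proof.

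The hard part is exactly (i) and (iii): making precise the passage from the short-interval sum to the Euler product on the short frequency window $|t|\le x/y$, and then carrying out the multiplicative-chaos estimates --- ballot/barrier bounds for the walk $t\mapsto\sum_{p\le e^{t}}\operatorname{Re}\big(f(p)p^{-1/2-i\tau}\big)$ and the attendant conditional moment computations --- that show this chaos is subcritical, hence of its mean order with probability bounded below, precisely when $\log(x/y)\gg\sqrt{\log_{2}x}$; this is where the threshold $\sqrt{\log_{2}x}$ is forced. I expect most of this apparatus to be already available from the proof of Theorem~\ref{theoreme_principal_1}, so that proving Theorem~\ref{theoreme_principal_2} mainly amounts to assembling these ingredients.
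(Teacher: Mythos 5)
Your upper bound, the reduction to anticoncentration, and the plain fourth-moment Paley--Zygmund argument in the regime $\log(x/y)\geqslant \log_2 x-O(1)$ are sound in the Steinhaus case (the paper instead quotes Soundararajan--Xu \cite{Sound_Xu} there). The genuine gap is in the middle range $\sqrt{\log_2 x}\ll\log(x/y)\lesssim\log_2 x$: your items (i)--(iii) are exactly the content of the theorem, you do not prove them, and your closing claim that ``most of this apparatus is already available from the proof of Theorem~\ref{theoreme_principal_1}'' is not correct. The upper-bound machinery only yields upper bounds for the chaos on barrier events; the lower bound requires two further ingredients which the paper develops separately: a lower bound for the barrier probability under the tilted measure $\widetilde{\mathbb{P}}_t$ (the two-sided ballot estimate \eqref{lower_bound_for_part_2}, \eqref{equation_P_asymp}), and, crucially, a two-point correlation bound for the restricted Euler products, $\mathbb{E}\big[\mathbb{1}_{L(0)}|F_x(\sigma)|^2\,\mathbb{1}_{L(t)}|F_x(\sigma+it)|^2\big]\ll {\rm e}^{2a}(\log x)^2$ for $|t|\leqslant 1$ together with decorrelation for $|t|>1$ (Lemma~\ref{lemma_harper_01}, adapted from Harper's Key Proposition 5, and Proposition~\ref{prop002}). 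The threshold emerges from the quantitative competition you only allude to: averaging over the long window $|t|\leqslant\delta$ gains a factor $1/\delta={\rm e}^{-\log(x/y)}$ against the ${\rm e}^{2a}$ loss on $|t|\leqslant1$, while the ballot estimate forces barrier height $a\gg\sqrt{\log_2 x}$ for the event to have probability $\gg1$; these are compatible precisely when $\log(x/y)\gg\sqrt{\log_2 x}$. None of this is contained in, or follows formally from, the Theorem~\ref{theoreme_principal_1} proof, so the heart of the argument is missing. (The paper's endgame is not Paley--Zygmund on $|M|^2\mathbb{1}_{\mathcal G}$ but the inequality $\mathbb{E}[Z^q]\geqslant(\mathbb{E}Z)^{2-q}/(\mathbb{E}[Z^2])^{1-q}$ for the restricted chaos $Z$, after symmetrization, Khintchine and Parseval; see Lemma~\ref{lemma_important1} and Propositions~\ref{prop001}--\ref{prop002}. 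Your variant can be organized to work, but only in that conditional form.)

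Two concrete defects you would hit when implementing the sketch. First, (iii) as literally stated, $\mathbb{E}[|M|^4\mathbb{1}_{\mathcal G}]\ll y^2$, is not deliverable by an Euler-product barrier event: $\mathcal G$ constrains products over primes $p\leqslant\sqrt{2x}$, but $|M|^4$ contains the fourth power of the smooth sum $M^{+}(x,y,\sqrt{2x};f)$, a function of those same small-prime variables whose unrestricted fourth moment is of order $y^2+(y^3/x)\log(y^2/x)$, hence $\gg y^2$ in part of this range, and the barrier gives no evident control of it. The fix (and the paper's route) is to discard $M^{+}$ by conditional symmetry and prove anticoncentration for $M^{-}=\sum_{p>\sqrt{2x}}f(p)M(x/p,y/p;f)$ conditionally on the small primes, so that what you actually need are truncated first and second moments of the chaos, $\mathbb{E}[\mathbb{1}_{\mathcal G}V]\gg y$ and $\mathbb{E}[\mathbb{1}_{\mathcal G}V^2]\ll y^2$ with $V$ the conditional variance. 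Second, in the easy range your Rademacher aside (``an extra multiplicative sign that only helps'') is wrong: the Rademacher fourth moment counts all squarefree quadruples with $n_1n_2n_3n_4$ a perfect square, a strictly larger set than $\{n_1n_2=n_3n_4\}$, so that case needs its own count or a citation (Chatterjee--Soundararajan \cite{sound_chat}, Soundararajan--Xu \cite{Sound_Xu}); relatedly, $A(x,y)\asymp y$ for the Rademacher normalization is itself not automatic in very short intervals and needs to be addressed rather than called standard.
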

\begin{theorem}\label{theoreme_principal_3}
    Let $r$ be a large prime. Let $x,y$ be such that $x+y\leqslant r$ and \,\,\,$\log(\frac{x}{y})\ll ~\log\log x$, and let $\theta$ as in Theorem \ref{theoreme_principal_1}. We set $L:= \min\{ x+y,r/(x+y)\}+3$. We have for $L$ large enough and uniformly for $\theta > 0$ and $ 0\leqslant q \leqslant 1$
    \begin{equation}\label{inequality_theorem3}
        \frac{1}{r-1}\sum_{\chi \!\!\!\!\!\mod r}\big|M(x,y;\chi)\big|^{2q} \ll \bigg( y \min\bigg\{ 1, \theta \sqrt{\log_2 x} + \frac{1}{(1-q)\sqrt{\log_2 L}}\bigg\} \bigg)^q.
    \end{equation}
\end{theorem}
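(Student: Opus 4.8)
The plan is to run the proof of the Steinhaus case of Theorem~\ref{theoreme_principal_1}, replacing the expectation $\mathbb{E}$ over the random multiplicative function by the average $\frac{1}{r-1}\sum_{\chi\bmod r}$, and the independence of the values $(f(p))_{p}$ by the orthogonality relations of Dirichlet characters modulo $r$. This is legitimate because that proof uses the law of $f$ only through low multiplicative moments $\mathbb{E}\big[\prod_{i} f(p_{i})^{a_{i}}\big]=\prod_{i}\mathbb{1}_{a_{i}=0}$, and for distinct primes $p_{i}<r$ one has the \emph{exact} identity $\frac{1}{r-1}\sum_{\chi\bmod r}\prod_{i}\chi(p_{i})^{a_{i}}=\prod_{i}\mathbb{1}_{a_{i}=0}$ as soon as $\prod_{i}p_{i}^{|a_{i}|}<r$. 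Hence, provided every Dirichlet polynomial and Euler product occurring in the argument is truncated at length $r^{1-\varepsilon}$ — which is harmless, since one checks that in the ranges considered all of them have length $\le r^{1/2+o(1)}$ — every second–moment and Euler–product estimate transfers verbatim to the character average, the only extra terms being pairs $(m,n)$ with $m\equiv n\pmod r$ and $m\ne n$, which contribute nothing since $|m-n|\ge r$. Two features of the character setting then modify the conclusion: the length constraint restricts the multiplicative chaos to dyadic prime scales below $\asymp\min\{x+y,r/(x+y)\}\asymp L$ instead of below $x$; and one needs the completion method of Harper~\cite{Harper_charac} to make this visible when $x+y>\sqrt{r}$. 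Throughout I use the hypothesis $\log(x/y)\ll\log\log x$, which gives $y\asymp x$ up to a factor $(\log x)^{O(1)}$ and hence $\log_{2}(x+y)\sim\log_{2}x$, together with ``$L$ large'' to make all the various double logarithms mutually comparable.

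First I would dispose of the $1$ in the minimum. Since $x+y\le r$ and $r$ is prime, two integers of $(x,x+y]$ congruent modulo $r$ coincide, so orthogonality gives $\frac{1}{r-1}\sum_{\chi\bmod r}|M(x,y;\chi)|^{2}=\#\{x<n\le x+y:\ r\nmid n\}=y+O(1)$; by concavity of $t\mapsto t^{q}$ on $[0,\infty)$ for $0\le q\le1$ (Jensen's inequality), $\frac{1}{r-1}\sum_{\chi\bmod r}|M(x,y;\chi)|^{2q}\le\big(y+O(1)\big)^{q}\ll y^{q}$. It remains to prove \eqref{inequality_theorem3} with the minimum replaced by $\theta\sqrt{\log_{2}x}+\frac{1}{(1-q)\sqrt{\log_{2}L}}$, and one may assume this quantity is $<1$, so that $\theta$ is small and $q$ is bounded away from $1$.

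For the main term I would distinguish two regimes. If $x+y\le\sqrt{r}$, then $M(x,y;\chi)$ is already a character sum whose natural prime range is $\asymp x+y\asymp L$, all below $\sqrt{r}$; running the proof of Theorem~\ref{theoreme_principal_1} through the transfer above, every Euler product over primes $p\le x+y$ involves only integers below $r$, the chaos lives on $\asymp\log_{2}L$ scales, and one recovers the bound of Theorem~\ref{theoreme_principal_1} with $x$ replaced by $L$. If $x+y>\sqrt{r}$ I would first complete: taking Beurling–Selberg majorants and minorants $\phi^{\pm}$ of $\mathbb{1}_{(x,x+y]}$ whose Fourier transforms on $\mathbb{Z}/r\mathbb{Z}$ are supported on $|h|\le H$, with $H\asymp(r/y)\log r$, and using the Gauss sum identity $\sum_{n\bmod r}\chi(n)e(hn/r)=\overline{\chi}(h)\tau(\chi)$ (where $e(t):=e^{2\pi i t}$), one obtains for non-principal $\chi$
\[
M(x,y;\chi)=\frac{\tau(\chi)}{r}\sum_{0<|h|\le H}\widehat{\phi}(h)\,\overline{\chi}(h)+\mathrm{err}(\chi),\qquad |\tau(\chi)|=\sqrt{r},
\]
with $\frac{1}{r-1}\sum_{\chi}|\mathrm{err}(\chi)|^{2q}\ll(y/\log r)^{q}$, which is negligible against the target. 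Here $|\widehat{\phi}(h)|\asymp\min\{y,r/|h|\}$, so the dual polynomial $N(\chi):=\sum_{0<|h|\le H}\widehat{\phi}(h)\,\overline{\chi}(h)$ has length $r^{1/2+o(1)}$, satisfies $\sum_{0<|h|\le H}|\widehat{\phi}(h)|^{2}\asymp ry$, and has effective length $\asymp r/y\asymp r/(x+y)\asymp L$; applying the proof of Theorem~\ref{theoreme_principal_1} to $N$ — the full-modulus additive twist $e(-hx/r)$ affects neither the second moments nor the chaos structure and is simply carried along — the chaos now runs over $\asymp\log_{2}(r/y)\asymp\log_{2}L$ scales, and since $\big(|\tau(\chi)|/r\big)^{2q}(ry)^{q}=y^{q}$ the normalisation is correct. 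In both regimes the comparison of the short window with a window of length $\asymp x$ contributes the term $\theta\sqrt{\log_{2}x}$ exactly as in the proof of Theorem~\ref{theoreme_principal_1} (in fact the argument gives the a priori stronger $\theta\sqrt{\log_{2}L}$, which one then weakens to $\theta\sqrt{\log_{2}x}$ using $L\le x$), and \eqref{inequality_theorem3} follows.

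The main obstacle is to make all of this genuinely quantitative and uniform over $\theta>0$ and $0\le q\le1$. The delicate point is that truncating every Dirichlet polynomial at length $r^{1-\varepsilon}$ must cost nothing on the critical scales — this is precisely the step that forces $\log_{2}x$ down to $\log_{2}L$, and it requires controlling the contribution of primes and smooth numbers in the discarded range by crude moment estimates that still transfer to characters. One must also carry out the completion without destroying the short-interval cancellation: the naive splitting $M(x,y;\chi)=\sum_{n\le x+y}\chi(n)-\sum_{n\le x}\chi(n)$ already loses it, each piece being typically of size $\sqrt{x}\gg\sqrt{y}$, so the difference of endpoint sums must be completed simultaneously. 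Finally one must glue the two regimes and the trivial bound of the second paragraph with absolute constants, using the ``$+3$'' and the hypothesis ``$L$ large'' to absorb the degenerate cases in which $\min\{x+y,r/(x+y)\}$ stays bounded.
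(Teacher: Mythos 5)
There is a genuine gap at the very first step. You claim that the proof of Theorem \ref{theoreme_principal_1} ``uses the law of $f$ only through low multiplicative moments'' and therefore transfers to the character average by orthogonality once every object is truncated at length $r^{1-\varepsilon}$. That is not the case: the heart of that proof consists of fractional moments $\mathbb{E}[(\cdots)^q]$ of integrals of $|F_k|^2$, probabilities of the barrier events $\mathcal{G}_q(k)$, the tilted measures $\widetilde{\mathbb{P}}_t$, Gaussian approximation of $\log|I_{u_m}|$ via Berry--Esseen, and an iteration over $q$. None of these are short Dirichlet polynomials in the values $f(p)$, and truncating Euler products does not turn indicator functions of events or $q$-th powers into quantities one can average over $\chi$ by orthogonality; this failure occurs even in the ``easy'' regime $x+y\le \sqrt{r}$, since the Euler products are supported on all $P$-smooth (or $\sqrt{2x}$-smooth) integers with unbounded multiplicity. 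The paper's Section 5, following Harper \cite{Harper_charac}, is built precisely to circumvent this: one inserts the partition of unity $\sum_{\mathbf{j}}\prod_i g_{j_i}(S_i(\chi))$, which depends on $\chi$ only at primes $p\le P=\exp((\log L)^{1/10})$, applies H\"older so that after this conditioning only genuine (weighted) second moments $\mathbb{E}^{\mathbf{j}}\big[|\sum\chi(n)|^2\big]$ have to be transferred to the random model, and then invokes Harper's transfer result (Lemma \ref{proposition-harper_approx_character}) under the length condition $(x+y)P^{400(Y/\lambda)^2\log(N\log P)}<r$. The fact that $P$ must be taken this small relative to $L$ is exactly why the saving degrades from $\sqrt{\log_2 x}$ to $\sqrt{\log_2 L}$. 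Your proposal contains no substitute for this conditioning mechanism, so the ``transfers verbatim'' step is not a proof but a restatement of the main difficulty.

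The dichotomy at $x+y=\sqrt{r}$ with Beurling--Selberg completion and Gauss sums is also not how the theorem is proved, and as described it does not go through. The multiplicative-chaos analysis of Theorem \ref{theoreme_principal_1} uses that the coefficients are the indicator of a short interval: this is what yields $A(s)=((1+1/\delta)^s-1)F_k(s)$ in the Parseval step and the decomposition into $\sum_d |M^{+}(x/d,y/d,P;f)|^2$ after conditioning on small primes. The dual coefficients $\widehat{\phi}(h)$ have slowly decaying moduli and rapidly oscillating phases of the shape $e(hx/r)$, and your assertion that this twist ``affects neither the second moments nor the chaos structure and is simply carried along'' is precisely what would require an argument (the bound $\frac{1}{r-1}\sum_\chi|\mathrm{err}(\chi)|^{2q}\ll (y/\log r)^q$ is likewise unjustified for $q<1$). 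In the paper no completion is used at all: the conditioning argument treats the whole range $x+y\le r$ uniformly, with $P$ (hence the number of chaos scales, $\asymp\log_2 L$) shrinking as $x+y$ approaches $r$, and this is the sole source of the $\min\{x+y,\,r/(x+y)\}$ in the definition of $L$. Your opening reduction of the ``$1$'' in the minimum via exact orthogonality of the second moment is fine, but the rest of the argument needs to be rebuilt around the conditioning device rather than around completion and direct transfer.
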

\begin{remark}
    Note that from Theorem \ref{theoreme_principal_1}, when $\theta = o(\frac{1}{\sqrt{\log_2 x}})$, we have 
    $$
    \mathbb{E}\bigg[ \big|M(x,y;f)\big|\bigg] =o(\sqrt{y}).
    $$
    In this domain, the quantity $ M(x,y;f)$ doesn't behave as gaussian. This extends Soundararajan and Xu's results \cite{Sound_Xu} of limiting distribution of 
    $$\frac{M(x,y;f)}{\sqrt{\mathbb{E}\big[ | M(x,y;f)|^{2}\big]}}$$ when $y$ is large comparing to $x$ (For more details, see \cite[Some corollaries 1.2.]{Harper}).
\end{remark}
\section{Sketch of the proof}
There are some technical distinctions between the Steinhaus and Rademacher cases. However the majority of their behavior and essential proof concepts align. Therefore, in this introductory discussion, we focus on the Steinhaus case. Theorem~ \ref{theoreme_principal_1} and Theorem~\ref{theoreme_principal_2} are established through the application of Harper's resilient approach in~\cite{Harper} to short intervals problem. The initial stage involves transforming the $L^1$ norm estimation into a specific average of squared random Euler products. Essentially, let $f$ be a Steinhaus multiplicative function and set $\delta:= \frac{x}{y}$. We prove that roughly speaking
\begin{equation}\label{price_bound}
     \mathbb{E}\bigg[ \bigg|\frac{1}{\sqrt{y}} M(x,y;f)\bigg|^{2q}\bigg] \approx \bigg(\frac{1}{\log x}\bigg)^q \mathbb{E}\bigg[\bigg(\frac{1}{\delta}\int_{-\delta}^{\delta}\big|F_{\sqrt{2x}}(it) \big|^2{\rm d}t\bigg)^q\bigg]
\end{equation}
where \begin{equation}\label{definition_F_z}
    F_{z}(s):= \prod_{p\leqslant z} \bigg(1-\frac{f(p)}{p^{1/2+s}}\bigg)^{-s}.
\end{equation} 
The difficult aspect lies in providing a sharp bound for right side hand of \eqref{price_bound}. Let's start with the upper bound. It's clear the straight application of H\"{o}lder's inequality doesn't give the required bound. In \cite{Harper}, the problem was reduced roughly speaking to $\mathbb{E}\bigg[\bigg(\int_{-1/2}^{1/2}\big|F_{\sqrt{2x}}(it) \big|^2{\rm d}t\bigg)^q\bigg] $. The importance difference between our problem \eqref{price_bound} and Harper's work is that the range of our integral in the right hand side of \eqref{price_bound} is larger. Indeed, we prove that as long as $\delta$ does not exceed a certain critical value, $\mathbb{E}\bigg[\bigg(\frac{1}{\delta}\int_{-\delta}^{\delta}\big|F_{\sqrt{2x}}(it) \big|^2{\rm d}t\bigg)^q\bigg]$  provides a ``good'' saving factor. However when $\delta$ becomes larger, we show that 
$$
\mathbb{E}\bigg[\bigg(\frac{1}{\delta\log x}\int_{-\delta}^{\delta}\big|F_{\sqrt{2x}}(it) \big|^2{\rm d}t\bigg)^q\bigg]\gg 1.
$$
In the prior studies (Harper \cite{Harper} and Xu \cite{Xu}), $\delta =\frac{1}{2}$. In the present investigation, $\delta$ takes large values, constituting a primary distinction between our work and the previous works.
Harper's approach initiates by introducing ``barrier events" that restrict the growth rate of every random partial Euler product across all values of $|t|\leqslant \delta$. In other words,  Harper define an event that control, for each $k$, the quantity $ \prod_{x^{{\rm e}^{-{k+1}}} < p\leqslant x^{{\rm e}^{-k}}} \big|1-\frac{f(p)}{p^{1/2+it}}\big|^{-1} $  by some ``expected growth". In addition to Harper's ``expected growth", in our case, we introduce an extra element arising from the extensive range of values within $|t|\leq \delta$: $\theta \sqrt{\log_2 x}$ where $\theta$ as defined in Theorem \ref{theoreme_principal_1}.\\
 Denote $\mathcal{G}$ to be this ``good" event, and $q':= \frac{1+q}{2}$. It is clear to see that
$$\begin{aligned} 
 &\mathbb{E} \bigg[\left(  \frac{1}{\delta\log x}   \int_{-\delta}^{\delta}|F_{\sqrt{2x}}(i t)|^2\right)^q\bigg]=\mathbb{E}\Bigg[\left(1_{\mathcal{G}} \frac{1}{\delta\log x} \int_{-\delta}^{\delta}|F_{\sqrt{2x}}(i t)|^2\right)^q\Bigg]
\\& \,\,\,\,\,+\mathbb{E}\Bigg[\left(1_{\mathcal{G} \text { fails }} \frac{1}{\delta\log x} \int_{-\delta}^{\delta}|F_{\sqrt{2x}}(i t)|^2\right)^q\Bigg] \\ & \leq \mathbb{E}\Bigg[\left(\mathbf{1}_{\mathcal{G}} \frac{1}{\delta\log x} \int_{-\delta}^{\delta}\left|F_{\sqrt{2x}}\left(i t\right)\right|^2\right)^q\Bigg]\\ &\,\,\,\,\,\,+\mathbb{P}\bigg[(\mathcal{G} \text { fails })^{\frac{q^{\prime}-q}{q^{\prime}}}\bigg]\left(\mathbb{E}\Bigg[\left(\frac{1}{\delta\log x} \int_{-\delta}^{\delta}\left|F_{\sqrt{2x}}\left(i t\right)\right|^2\right)^{q^{\prime}}\Bigg]\right)^{\frac{q}{q'}}.\end{aligned}$$\\
Here we applied H\"{o}lder's inequality in the second line. Following Harper \cite{Harper}, we prove first that the probability of ``$\mathcal{ G} $ fails" is ``small" and secondly the above expectation under the condition ``$\mathcal{G}$ holds" provides a saving factor: $\theta \log_2 x+ \frac{1}{(1-q)\sqrt{\log_2 x}}$. Following an iterative procedure, we get the required bound. Note that when $ \theta$ is larger than $1/\sqrt{\log_2 x}$, our bound is useless, since we can get a better bound by using a H\"{o}lder's inequality.  

For the lower bound, the strategy is relatively simple. we use the same approach as~\cite{Harper}: the problem is reduced to give an upper bound $ \mathbb{E}\left[\mathbf{1}_{\mathcal{G}} \frac{1}{\delta\log x} \int_{\mathcal{L}}\left|F_{\sqrt{2x}}\left(i t\right)\right|^2\right]^2$ for a chosen random set in $[-\delta, \delta]$ that verifies some ``good" barriers. We set $L(t)$ the event: those ``good" barriers occurs at point $t\in [-\delta ,\delta]$. The idea is that when we expend this expectation to $ \mathbb{E}\big[\mathbb{1}_{L(t_1)}\left|F_{\sqrt{2x}}\left(i t_1\right)\right|^2\mathbb{1}_{L(t_2)}\left|F_{\sqrt{2x}}\left(i t_2\right)\right|^2\big]$, we want this quality to behave as $\mathbb{E}\big[\mathbb{1}_{L(t_1)}\left|F_{\sqrt{2x}}\left(i t_1\right)\right|^2\big]\mathbb{E}\big[\mathbb{1}_{L(t_2)}\left|F_{\sqrt{2x}}\left(i t_2\right)\right|^2 \big]$.
\section{Preliminary results}
\subsection{Some known results}
We begin by stating some lemmas.

\begin{lemma}{(Parseval's identity).}\label{parseval} Let $(a_n)_{n \in \mathbb{N}}$ be sequence of complex numbers $A(s):= \sum_{n=1}^{+ \infty} \frac{a_n}{n^s}$ denotes the corresponding Dirichlet series and $\sigma_c$ denotes its abscissa of convergence. Then for any $\sigma > \max(0,\sigma_c)$, we have 
$$ \int_{0}^{+ \infty} \frac{\big|\sum_{n \leqslant   x} a_n \big|^2}{x^{1+2\sigma}}{\rm d}x=\frac{1}{2\pi}\int_{- \infty}^{+ \infty} \bigg| \frac{A(\sigma + it)}{\sigma +it} \bigg|^2 {\rm d}t.$$
\end{lemma}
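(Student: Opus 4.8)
The plan is to recognize the two sides of the identity as the two sides of Plancherel's theorem, applied to the Fourier transform of the exponentially rescaled partial-sum function. Write $S(x):=\sum_{n\leqslant x}a_n$, so that $S$ is locally constant and vanishes for $x<1$. The first step is to establish the Mellin-transform identity
$$\int_0^{\infty}\frac{S(x)}{x^{s+1}}\,{\rm d}x=\frac{A(s)}{s},\qquad s=\sigma+it,\ \ \sigma>\max(0,\sigma_c).$$
I would prove this by working on the truncated integral $\int_1^N$, where the sum $\sum_{n\leqslant x}a_n$ may be interchanged with the integral since only finitely many terms occur; this yields $\frac1s\sum_{n\leqslant N}a_n n^{-s}-\frac{S(N)}{sN^{s}}$. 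Letting $N\to\infty$, the first term tends to $A(s)/s$ because $\sigma>\sigma_c$, while the boundary term vanishes because the elementary bound $S(x)\ll_{\varepsilon}x^{\max(0,\sigma_c)+\varepsilon}$ — obtained from Abel summation applied to the convergent series $\sum a_n n^{-\sigma'}$ for any $\sigma_c<\sigma'<\sigma$ — gives $S(N)N^{-\sigma}\to 0$. The same bound, together with $\sigma>\max(0,\sigma_c)$, shows the full integral converges absolutely, so passage to the limit is legitimate.

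The second step is the change of variables $x=e^{u}$. Setting $g(u):=S(e^{u})\,e^{-\sigma u}$, the Mellin identity becomes
$$\widehat{g}(t):=\int_{-\infty}^{\infty}g(u)\,e^{-itu}\,{\rm d}u=\frac{A(\sigma+it)}{\sigma+it},$$
so $t\mapsto A(\sigma+it)/(\sigma+it)$ is precisely the Fourier transform of $g$. One checks $g\in L^2(\mathbb{R})$: it vanishes on $(-\infty,0)$, and on $[0,\infty)$ the bound $S(e^{u})\ll_{\varepsilon}e^{(\max(0,\sigma_c)+\varepsilon)u}$ gives $|g(u)|\ll e^{-cu}$ for some $c>0$ once $\varepsilon$ is small enough, which is square-integrable. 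Now invoke Plancherel's theorem, $\int_{-\infty}^{\infty}|g(u)|^2\,{\rm d}u=\frac1{2\pi}\int_{-\infty}^{\infty}|\widehat{g}(t)|^2\,{\rm d}t$, and undo the substitution on the left: $\int_{-\infty}^{\infty}|S(e^{u})|^2e^{-2\sigma u}\,{\rm d}u=\int_0^{\infty}|S(x)|^2x^{-1-2\sigma}\,{\rm d}x$. Combining these gives the claimed formula, with the $\frac1{2\pi}$ coming from the Plancherel normalization attached to $\widehat{g}(t)=\int g(u)e^{-itu}\,{\rm d}u$.

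The only genuinely delicate point is the justification of the Mellin-transform identity in the regime $\sigma_c<\sigma\leqslant\sigma_a$, where the Dirichlet series converges only conditionally, so one cannot simply exchange $\sum$ and $\int$ by absolute convergence; this is exactly what the truncation-and-limit argument above handles, the crucial input being the polynomial bound on $S(x)$ coming from the abscissa of convergence. Everything else is a routine change of variables together with the standard $L^2$ theory of the Fourier transform.
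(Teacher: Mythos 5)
Your proof is correct: it is the standard Plancherel/Mellin argument (partial-sum Mellin transform plus the $L^2$ Fourier isometry after the substitution $x={\rm e}^u$), which is exactly the proof behind the identity the paper simply cites from Montgomery--Vaughan, equation (5.26). The delicate points — the bound $S(x)\ll_\varepsilon x^{\max(0,\sigma_c)+\varepsilon}$ justifying the limit of the truncated integral, and the square-integrability of $g$ — are handled properly, so nothing is missing.
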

\begin{proof}
See \cite[equation (5.26)]{MontgVaugh}.
\end{proof}

\noindent We define the following parameter
\begin{equation}\label{definition_a_f}
    a_f=
     \begin{cases}
        1 & \text{ if } f \text{ is a Rademacher multiplicative function} \\
        -1  & \text{ if } f \text{ is a Steinhaus multiplicative function}.
     \end{cases}
\end{equation}
\begin{lemma}{(Euler product result)}\label{lemmarachid2}
Let $f$ be a Rademacher or Steinhaus random multiplicative function.  For $t \in \mathbb{R}$ and $2 \leqslant   x \leqslant   y$, we have
$$ \mathbb{E} \Bigg[ \prod_{x < p \leqslant   y} \bigg| 1+a_f\frac{f(p)}{p^{1/2+it}} \bigg|^{2a_f} \Bigg] = \prod_{x < p \leqslant   y} \bigg( 1+\frac{a_f}{p} \bigg)^{a_f}. $$
\end{lemma}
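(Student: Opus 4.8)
The plan is to compute the expectation factor by factor, using independence of the $f(p)$ across primes $p$, so that the left-hand side becomes $\prod_{x<p\leqslant y}\mathbb{E}\bigl[\,|1+a_f f(p)p^{-1/2-it}|^{2a_f}\,\bigr]$, and then to evaluate each single-prime expectation in the two cases $a_f=1$ (Rademacher) and $a_f=-1$ (Steinhaus). In both cases the crucial point is that the exponent $2a_f$ is an even integer (namely $2$ for Rademacher, $-2$ for Steinhaus), so that $|1+a_f f(p)p^{-1/2-it}|^{2a_f}$ can be expanded or inverted exactly rather than estimated.

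For the Rademacher case first I would write $f(p)=\varepsilon_p\in\{\pm1\}$, so that $|1+\varepsilon_p p^{-1/2-it}|^2 = 1 + \varepsilon_p(p^{-1/2-it}+p^{-1/2+it}) + p^{-1}$; taking expectations kills the middle term (since $\mathbb{E}[\varepsilon_p]=0$) and leaves $1+\tfrac1p$, which matches $(1+\tfrac{a_f}{p})^{a_f}$ with $a_f=1$. For the Steinhaus case I would write $f(p)=e^{i\phi_p}$ with $\phi_p$ uniform on the circle; then $|1+a_f f(p)p^{-1/2-it}|^{2a_f} = |1 - f(p)p^{-1/2-it}|^{-2}$, and $|1-f(p)p^{-1/2-it}|^{-2}$ is a product $(1-f(p)p^{-1/2-it})^{-1}(1-\overline{f(p)}p^{-1/2+it})^{-1}$. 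Expanding each factor as a geometric series $\sum_{k\geqslant0} f(p)^k p^{-k(1/2+it)}$ and multiplying, the expectation picks out only the diagonal terms (since $\mathbb{E}[f(p)^j\overline{f(p)}^k]=\mathbb{1}_{j=k}$), giving $\sum_{k\geqslant0} p^{-k} = (1-\tfrac1p)^{-1}$, which matches $(1+\tfrac{a_f}{p})^{a_f}$ with $a_f=-1$. Multiplying over $x<p\leqslant y$ then yields the claimed identity.

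I do not expect a genuine obstacle here: the only things to be careful about are (i) justifying that the product over primes of the expectations equals the expectation of the product, which is immediate from independence and the fact that we are dealing with a finite product over $x<p\leqslant y$ of nonnegative random variables, and (ii) in the Steinhaus case, justifying the interchange of the expectation with the (absolutely convergent, since $p^{-1/2}<1$) double series expansion of $|1-f(p)p^{-1/2-it}|^{-2}$, which follows from dominated convergence or Tonelli applied to $\sum_{j,k} p^{-(j+k)/2}<\infty$. With these two routine justifications in place the identity drops out of the orthogonality relations $\mathbb{E}[\varepsilon_p]=0$ and $\mathbb{E}[f(p)^j\overline{f(p)}^k]=\mathbb{1}_{j=k}$.
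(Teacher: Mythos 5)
Your argument is correct: factoring the expectation over primes by independence and then evaluating each single-prime expectation (killing the cross term via $\mathbb{E}[\varepsilon_p]=0$ in the Rademacher case, and extracting the diagonal of the double geometric series via $\mathbb{E}[f(p)^j\overline{f(p)}^k]=\mathbb{1}_{j=k}$ in the Steinhaus case) gives exactly $(1+\tfrac{a_f}{p})^{a_f}$ per prime, and your justifications of the product/expectation interchange and of the series manipulation (absolute convergence since $p^{-1/2}\leqslant 2^{-1/2}<1$) are the only points needing care. The paper itself does not prove this lemma but simply cites Lemma 2.4 of Mastrostefano; your computation is the standard self-contained argument underlying that citation, so there is no substantive difference in method.
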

\begin{proof}
See \cite[lemma 2.4]{Mastrostefano}.\goodbreak
\end{proof}

\begin{lemma}\label{lemma_Xu}
 Let $f$ be a Steinhaus random multiplicative function. Then for any \\$400<x \leqslant z$, $\sigma>-1 / \log z$ and $t \in \mathbb{R}$, we have
 \begin{equation}\label{eq_Xu1}
 \begin{aligned}
          & \!\!\!\!\!\!\!\!\!\!\!\!\!\!\!\!\!\!\!\!\!\!\!\!\mathbb{E}\left[\prod_{x<p \leqslant z}\left|1-\frac{f(p)}{p^{\frac{1}{2}+\sigma}}\right|^{-2}  \left| 1-\frac{f(p)}{p^{\frac{1}{2}+\sigma+i t}}\right|^{-2}\right]
          \\ & =\exp \left(\sum_{x<p \leqslant z} \frac{2+2 \cos (t \log p)}{p^{1+2 \sigma}}+O\left(\frac{1}{\sqrt{x} \log x}\right)\right).
 \end{aligned}
     \end{equation}
     Moreover, if $x>{\rm e}^{1 /|t|}$, then we further have \begin{equation}\label{eq_Xu2}
         \mathbb{E}\left[\prod_{x<p \leqslant z}\left|1-\frac{f(p)}{p^{\frac{1}{2}+\sigma}}\right|^{-2}\left|1-\frac{f(p)}{p^{\frac{1}{2}+\sigma+i t}}\right|^{-2}\right]=\exp \left(\sum_{x<p \leqslant z} \frac{2}{p^{1+2 \sigma}}+O(1)\right) .
     \end{equation} 
\end{lemma}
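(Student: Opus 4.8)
The plan is to exploit independence of the $f(p)$ to reduce everything to a single-prime expectation, evaluate that local factor in closed form by a Fourier expansion, and then take logarithms and sum. Concretely, since $(f(p))_p$ are independent the expectation equals $\prod_{x<p\le z}\mathbb{E}\big[|1-f(p)p^{-1/2-\sigma}|^{-2}|1-f(p)p^{-1/2-\sigma-it}|^{-2}\big]$. For a single prime I would write $f(p)=e^{i\vartheta}$ with $\vartheta$ uniform on $[0,2\pi)$ and use $f(p)p^{-1/2-\sigma-it}=e^{i(\vartheta-t\log p)}p^{-1/2-\sigma}$, so that the local factor is $\frac1{2\pi}\int_0^{2\pi}|1-re^{i\vartheta}|^{-2}|1-re^{i(\vartheta-t\log p)}|^{-2}\,{\rm d}\vartheta$ with $r=p^{-1/2-\sigma}<1$ (legitimate since $\sigma>-1/\log z>-1/2$). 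Inserting the Poisson-kernel identity $|1-re^{i\phi}|^{-2}=(1-r^2)^{-1}\sum_{n\in\mathbb Z}r^{|n|}e^{in\phi}$ for both factors and integrating term by term in $\vartheta$ picks out the diagonal and gives the exact formula
$$\mathbb{E}\Big[\big|1-\tfrac{f(p)}{p^{1/2+\sigma}}\big|^{-2}\big|1-\tfrac{f(p)}{p^{1/2+\sigma+it}}\big|^{-2}\Big]=\frac{1+a_p}{(1-a_p)\big(1-2a_p\cos(t\log p)+a_p^{2}\big)},\qquad a_p:=p^{-1-2\sigma},$$
the two-parameter analogue of the computation behind Lemma~\ref{lemmarachid2}.

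Next I would take logarithms. The hypothesis $\sigma>-1/\log z$ forces $p^{-2\sigma}\le e^{2}$ for all $p\le z$, hence $a_p\le e^{2}/p\le e^{2}/400$ is uniformly small, so a Taylor expansion is valid and $\log\frac{1+a_p}{(1-a_p)(1-2a_p\cos(t\log p)+a_p^{2})}=2a_p\big(1+\cos(t\log p)\big)+O(a_p^{2})$. Summing over $x<p\le z$ produces the claimed main term $\sum_{x<p\le z}\frac{2+2\cos(t\log p)}{p^{1+2\sigma}}$, while the error is $\ll\sum_{p>x}p^{-2-4\sigma}\ll\sum_{p>x}p^{-2}\ll\frac1{x\log x}\ll\frac1{\sqrt x\log x}$, again using $p^{-4\sigma}\ll1$ for $p\le z$ and observing that this bound does not depend on $z$. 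This yields \eqref{eq_Xu1}.

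For \eqref{eq_Xu2} it then suffices to show that under the extra hypothesis $x>e^{1/|t|}$ (equivalently $|t|\log p>1$ for all $p>x$) the oscillatory part satisfies $\sum_{x<p\le z}\frac{\cos(t\log p)}{p^{1+2\sigma}}=O(1)$, since it can then be absorbed into the $O(1)$ error. I would write it as $\Re\sum_{x<p\le z}p^{-1-2\sigma+it}$ and apply partial summation against $\theta(u)=\sum_{p\le u}\log p=u+O\!\big(u\exp(-c\sqrt{\log u})\big)$. The smooth part $\int_x^z u^{-1-2\sigma+it}(\log u)^{-1}\,{\rm d}u$, after one integration by parts, contributes boundary and tail terms each of size $\ll\frac{z^{-2\sigma}}{|t|\log x}\ll\frac1{|t|\log x}\le1$ — this is precisely where the hypothesis $|t|\ge1/\log x$ enters and the condition $z^{-2\sigma}\le e^2$ from $\sigma>-1/\log z$ is used. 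The prime-number-theorem error part of the partial summation contributes $\ll(1+|t|)\exp(-c'\sqrt{\log x})$, which is $O(1)$ throughout the range in which the lemma is applied (there $|t|$ is at most a fixed power of $\log x$). Folding this $O(1)$ cosine sum into the error of \eqref{eq_Xu1} gives \eqref{eq_Xu2}.

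The main obstacle is this last step: obtaining a genuinely uniform $O(1)$ for the oscillatory prime sum. The boundary term is clean and the condition $|t|\ge1/\log x$ is tailor-made for it, but bounding the prime-number-theorem error uniformly in $t$ requires the classical zero-free region, and for $|t|$ super-polynomial in $\log x$ one would need a separate cruder estimate — a regime that does not arise in the uses of this lemma in the paper.
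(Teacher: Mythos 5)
Your proof is correct and is essentially the argument that the paper outsources to Harper: the paper's own ``proof'' of this lemma is a two-line citation, and what you have written is a faithful reconstruction of the cited computation. For \eqref{eq_Xu1}, the factorisation by independence, the exact evaluation of the local factor via the Poisson-kernel expansion (giving $\frac{1+a_p}{(1-a_p)(1-2a_p\cos(t\log p)+a_p^2)}$ with $a_p=p^{-1-2\sigma}$), and the Taylor expansion of its logarithm are all valid; the hypotheses $\sigma>-1/\log z$ and $x>400$ are used exactly where you say (to get $a_p\leqslant e^2/p<1$ and $p^{-4\sigma}\ll 1$), and your error bound $\sum_{p>x}p^{-2-4\sigma}\ll 1/(x\log x)$ is in fact stronger than the stated $O(1/(\sqrt{x}\log x))$. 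For \eqref{eq_Xu2}, you correctly identify that the whole content is $\sum_{x<p\leqslant z}\cos(t\log p)p^{-1-2\sigma}\ll 1$ — which is precisely the reduction the paper makes before citing Harper again — and your partial-summation argument is the standard one.

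The caveat you raise about uniformity in $t$ is genuine and worth keeping: the prime-number-theorem error contributes $\ll(1+|t|)\exp(-c\sqrt{\log x})$, so your argument establishes \eqref{eq_Xu2} only for $|t|\leqslant\exp(c\sqrt{\log x})$. In fact the unrestricted statement ``for all $t\in\mathbb{R}$'' with an absolute implied constant cannot hold: for $t$ realising extreme values of $|\zeta(1+it)|$ the cosine sum over $x<p\leqslant z$ can be as large as $\log\log\log|t|$ for suitable $z$. This is a defect of the lemma's statement as transcribed from Harper (whose version carries the corresponding restriction on $|t|$), not of your reasoning; in every application in this paper one has $|t|\ll\delta^5\ll(\log x)^{O(1)}$, squarely inside the range your proof covers. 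So there is no gap relative to what is actually needed.
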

\begin{proof}
    The proof of \eqref{eq_Xu1} is established in the proof of \cite[lemma 6]{Harper}. To establish \eqref{eq_Xu2}, it is sufficient to proof $\sum_{x<p\leqslant z}\frac{\cos (t \log p)}{p^{1+2\sigma}} \ll 1$. This is a consequence of the prime number theorem (as exemplified, for instance, in the proof of \cite[lemma 5]{Harper}).
\end{proof}

An essential probabilistic outcome employed within Harper's approach is a variation of a well-known Gaussian random walk result. This result has ties to the ``ballot problem".
\begin{lemma}\label{gaussian_approximation}
Let $a\geqslant 1$. For any integer $n \geqslant 1$, let $G_1,...,G_n$ be an independent real Gaussian random variables, each having mean 0 and variance between $\frac{1}{20}$ and $20$. Then
$$
\mathbb{P}\bigg[  \sum_{m=1}^j G_m \leqslant  a+2 \log j + O(1) \text{ for all } j \leqslant  n  \bigg] \asymp \min\bigg\{1,\frac{a}{\sqrt{n}}\bigg\}.
$$
\end{lemma}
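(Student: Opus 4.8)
This is a ballot-type estimate for a Gaussian random walk below a slowly growing barrier, and the plan is to reduce it to the classical constant-barrier reflection estimate. Write $S_j:=\sum_{m=1}^{j}G_m$ and $\sigma_j^2:=\sum_{m=1}^{j}\mathrm{Var}(G_m)\in[j/20,20j]$; since the $G_m$ are Gaussian one may realise $S_j=W(\sigma_j^2)$ for a single standard Brownian motion $W$ (take $G_m=W(\sigma_m^2)-W(\sigma_{m-1}^2)$), so $j\asymp\sigma_j^2$ and the barrier is, up to $O(1)$, comparable to $a+2\log\sigma_j^2$. Two extreme regimes are cleared first. If $n=O(1)$ then $\min\{1,a/\sqrt n\}\asymp1$ (as $a\ge1$), and the event has probability $\asymp1$, being an intersection of boundedly many Gaussian constraints with barrier $\ge a-O(1)$. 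If $a\ge\sqrt n$, then since $2\log j\ge0$ and $\sigma_n^2\le20n$ the event contains $\{\sup_{u\le20n}W(u)\le a-O(1)\}$, which has probability $\mathbb{P}[|W(20n)|\le a-O(1)]\asymp1$; with the trivial bound $\le1$ this gives $\asymp1$. Hence we may assume $n$ large and $1\le a<\sqrt n$, and must show $\mathbb{P}[S_j\le a+2\log j+O(1)\ \forall j\le n]\asymp a/\sqrt n$.

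For the lower bound I would use monotonicity of the barrier: $2\log j+O(1)\ge0$ once $j$ exceeds a fixed $j_0$, so the event contains the intersection of the independent events $\{S_j\le c_0\ \forall j\le j_0\}$ and $\{S_{j_0+i}-S_{j_0}\le a-c_0\ \forall i\le n-j_0\}$ for a suitable constant $c_0$ with $a-c_0\ge0$. The first has constant positive probability, and the second equals $\mathbb{P}[\max_{i\le n-j_0}(S_{j_0+i}-S_{j_0})\le a-c_0]\asymp\min\{1,a/\sqrt n\}$, by the reflection identity $\mathbb{P}[\sup_{u\le T}W(u)\le b]=\mathbb{P}[|W(T)|\le b]$ ($b\ge0$) after embedding, together with the Sparre Andersen ballot estimate $\mathbb{P}[\max_{i\le m}S_i\le0]\asymp m^{-1/2}$ to cover the case $a-c_0=O(1)$.

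The substance is the matching upper bound, which I would get by conditioning on the endpoint. Write $\psi(j):=a+2\log j+O(1)$. Since $S_n=v>\psi(n)$ already violates the constraint at $j=n$,
\[
\mathbb{P}[S_j\le\psi(j)\ \forall j\le n]=\int_{v\le\psi(n)}\mathbb{P}[\,S_j\le\psi(j)\ \forall j\le n\mid S_n=v\,]\,\mathbb{P}[S_n\in\mathrm{d}v].
\]
The conditional probability is that of the bridge from $0$ to $v$ over $[0,n]$ staying below $\psi$; I would estimate it as $\asymp\min\{1,a(\psi(n)-v)/n\}$, using that $\psi$ is concave and slowly varying, so that at the scales that actually constrain the bridge it is still of size $\asymp a$, the logarithm entering only through the right endpoint $\psi(n)$. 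As $\mathbb{P}[S_n\in\mathrm{d}v]\asymp\mathrm{d}v/\sqrt n$ for $|v|\lesssim\sqrt n$ with Gaussian tails beyond, substituting $w=\psi(n)-v$ gives
\[
\mathbb{P}[S_j\le\psi(j)\ \forall j\le n]\ \asymp\ \frac{1}{\sqrt n}\int_{0}^{\Theta(\sqrt n)}\min\Big\{1,\frac{aw}{n}\Big\}\,\mathrm{d}w\ \asymp\ \frac{a}{\sqrt n},
\]
the integral being dominated by $w\asymp\sqrt n$ (there $aw/n\lesssim a/\sqrt n<1$, so the truncation is inactive) and the tail $v<-\Theta(\sqrt n)$ contributing $\ll a/\sqrt n$. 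This is where the $2\log j$ proves harmless: the dominant mass sits at $v\asymp-\sqrt n$, where the offset $\psi(n)-v\asymp\sqrt n$ dwarfs the logarithm.

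The main obstacle is the bridge-below-$\psi$ estimate just invoked. For an exactly linear barrier it is the classical reflection formula $\mathbb{P}[\mathrm{bridge}_{0\to v}\le\ell]=1-e^{-2\ell(0)(\ell(n)-v)/n}$ (when $\ell(0),\ell(n)-v\ge0$), but $\psi(j)=a+2\log j+O(1)$ is only slowly varying, and one must show that replacing it by a comparable linear barrier costs only a bounded factor, uniformly in $a\in[1,\sqrt n)$, $v$ and $n$. I would do this by the standard Bramson-type argument: split $[1,n]$ into $O(\log n)$ geometric blocks on which $\psi$ is constant up to $O(1)$, apply the constant-barrier reflection estimate on each, and chain via the Markov property while controlling that the path does not dip anomalously far below $\psi$ (the scenario in which a block's saving is lost). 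Equivalently, in the Brownian picture one may quote a known moving-boundary estimate for Brownian motion below a boundary of type $a+O(\log(2+t))$, after passing from the grid $\{\sigma_j^2\}$ to continuous time on a high-probability event controlling the oscillation of $W$ between consecutive grid points.
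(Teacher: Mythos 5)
The paper does not prove this lemma at all: its ``proof'' is a one-line citation to Harper's Probability Result 1 in \cite{Harper}, so your blind attempt is being compared against an imported black box rather than an argument in the text. Your sketch reconstructs essentially the standard route that underlies such ballot-type results (and Harper's own proof of the cited result is of this flavour): embed the Gaussian walk in a Brownian motion, dispose of the regimes $n=O(1)$ and $a\geqslant\sqrt{n}$, get the lower bound by sacrificing an initial block of $O(1)$ steps and using the reflection principle for a constant barrier, and get the upper bound by conditioning on the endpoint $S_n=v$ and estimating the probability that the bridge stays below the boundary $\psi(j)=a+2\log j+O(1)$, the logarithm being harmless because the endpoint mass concentrates at $v\asymp-\sqrt{n}$ where $\psi(n)-v\asymp\sqrt{n}$. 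The outline, the regime splitting, and the final integration $\frac{1}{\sqrt n}\int_0^{\Theta(\sqrt n)}\min\{1,aw/n\}\,{\rm d}w\asymp a/\sqrt n$ are all sound, and the lower-bound containment argument is correct as stated (with $c_0$ chosen negative enough that $c_0\leqslant a+2\log j+O(1)$ for all $j$ and $2\log j+O(1)\geqslant0$ for $j\geqslant j_0$).

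The one genuine gap is the step you yourself flag as the main obstacle: the claim that the bridge from $0$ to $v$ stays below the \emph{logarithmic} boundary with probability $\asymp\min\{1,a(\psi(n)-v)/n\}$, uniformly in $1\leqslant a<\sqrt n$, $v\leqslant\psi(n)$ and $n$. This is not a routine perturbation of the linear-barrier reflection formula, because the boundary grows by $2\log n$ over the interval while the relevant entry height is only $a\asymp1$ in the worst case; the uniformity in $a$ is precisely where the work lies, and it is exactly the content of the result the paper imports from Harper. Your proposed remedies (Bramson-type chaining over $O(\log n)$ geometric blocks, or quoting a moving-boundary estimate for boundaries satisfying the integral test $\int\psi(t)t^{-3/2}{\rm d}t<\infty$) are the right tools and would close the argument, but as written they are named rather than executed, so the sketch stops short of a self-contained proof at the only point where the lemma is nontrivial.
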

\begin{proof}
This is \cite[Probability Result 1]{Harper}.
\end{proof}
\begin{lemma}\label{perturabtion}
    Let $f$ be a Rademacher or Steinhaus random multiplicative function. Let $P>100$ and $\delta>1$. We set $K_P:= (\log P)^{1.01}$. For $P$ large enough, we have uniformly 
    $$
\begin{aligned}
    & \sum_{|k|\leqslant \delta K_P}   \frac{1}{\delta}\int_{-\frac{1}{2 K_P}}^{\frac{1}{2 K_P}}\mathbb{E}\bigg[  \bigg|F_P\bigg(\frac{ik}{ K_P}+it \bigg) - F_P\bigg(\frac{ik}{ K_P}\bigg)\bigg|^2  \bigg]  {\rm d}t
     \ll ( \log P)^{0.99} 
\end{aligned}
$$
where $F_P$ is defined in \eqref{definition_F_z}.
\end{lemma}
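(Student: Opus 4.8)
Looking at Lemma~\ref{perturabtion}, the goal is to bound the average over $k$ and $t$ of $\mathbb{E}[|F_P(ik/K_P + it) - F_P(ik/K_P)|^2]$ by $(\log P)^{0.99}$, where the $t$-range $[-1/(2K_P), 1/(2K_P)]$ is tiny (since $K_P = (\log P)^{1.01}$) and the number of $k$-values is about $2\delta K_P$.

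\medskip

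\noindent\textbf{Proof proposal.} The plan is to fix $k$ and first handle the inner quantity $\mathbb{E}\big[|F_P(ik/K_P + it) - F_P(ik/K_P)|^2\big]$ for $|t| \le 1/(2K_P)$, then sum trivially over the $O(\delta K_P)$ values of $k$ and divide by $\delta$, which is why the target has the shape $K_P \cdot (\text{something of size }(\log P)^{-0.02}) = (\log P)^{0.99}$ roughly — so I need the inner expectation, after averaging in $t$, to be $O((\log P)^{-0.02})$ on average over $k$, or at worst I need $\sum_k \frac{1}{\delta}\int |{\cdots}| \ll (\log P)^{0.99}$. The first step is to expand $F_P(s) = \prod_{p\le P}(1 - f(p)p^{-1/2-s})^{-s}$ (note the exponent is $s$, matching \eqref{definition_F_z}) — wait, more carefully, writing $s_0 = ik/K_P$ and $s = s_0 + it$, use the telescoping/ martingale-type identity or simply $|F_P(s) - F_P(s_0)|^2 = |F_P(s_0)|^2 \cdot |F_P(s)/F_P(s_0) - 1|^2$ and expand $F_P(s)/F_P(s_0) - 1$ via the logarithm: $\log F_P(s) - \log F_P(s_0) = \sum_{p\le P}\big(-s\log(1-f(p)p^{-1/2-s}) + s_0\log(1-f(p)p^{-1/2-s_0})\big)$. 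Taylor-expanding $\log(1-u) = -u - u^2/2 - \cdots$, the main term is $\sum_{p\le P} f(p)(s\,p^{-1/2-s} - s_0\,p^{-1/2-s_0})$ plus lower-order terms in $p^{-1}$ which are harmless.

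\medskip

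\noindent The second step is the key computation: bound $\mathbb{E}[|F_P(s)-F_P(s_0)|^2]$. I would use independence of the $f(p)$ to write, after the expansion, something like $\mathbb{E}[|F_P(s_0)|^2] \cdot \mathbb{E}\big[|\sum_p f(p)(\cdots)|^2 + \text{h.o.t.}\big]$ — but since the $f(p)$ appearing in $F_P(s_0)$ and in the difference are the \emph{same} variables, I should instead directly apply Lemma~\ref{lemmarachid2} / Lemma~\ref{lemma_Xu}-style Euler product evaluations to the full product $\mathbb{E}\big[\prod_{p\le P}|1-f(p)p^{-1/2-s}|^{-2}|1-f(p)p^{-1/2-s_0}|^{-2} \cdot (\text{correction from }|F_P(s)-F_P(s_0)|^2)\big]$. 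The cleaner route: write $|F_P(s)-F_P(s_0)|^2 = |F_P(s)|^2 - 2\mathrm{Re}(F_P(s)\overline{F_P(s_0)}) + |F_P(s_0)|^2$ and evaluate each of the three expectations using Euler products over primes. The cross term $\mathbb{E}[F_P(s)\overline{F_P(s_0)}]$ and the diagonal terms will each be roughly $\exp(\sum_{p\le P} c_p)$ for explicit $c_p$, and their combination should produce a factor measuring $|s - s_0|^2 = t^2 \le 1/(4K_P^2)$ times $\log P$-type sums; concretely I expect $\mathbb{E}[|F_P(s)-F_P(s_0)|^2] \ll t^2 (\log P)^{?} \cdot (\text{size of }\mathbb{E}|F_P|^2)$, and since $\mathbb{E}[|F_P(it)|^2] \approx \log P$ (from $\prod_{p\le P}(1-1/p)^{-1}$-type behavior when exponents are $\pm 1$ — careful with the exponent being $s$ not $1$), the $t^2 \asymp K_P^{-2} = (\log P)^{-2.02}$ will beat the polylog factors.

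\medskip

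\noindent The third step is to sum over $|k| \le \delta K_P$ and integrate over $|t| \le 1/(2K_P)$: the $t$-integral contributes a factor $1/K_P = (\log P)^{-1.01}$, and the sum over $k$ contributes $O(\delta K_P) = O(\delta (\log P)^{1.01})$ terms, which after dividing by $\delta$ gives $O((\log P)^{1.01})$ terms; multiplying by the bound on the inner expectation (which I expect to be of size $(\log P)^{-0.02} \cdot (\log P)^{1.01} \cdot$ negative powers) should collapse to $(\log P)^{0.99}$. I would need to be careful that the bound on $\mathbb{E}[|F_P(s)-F_P(s_0)|^2]$ is \emph{uniform in} $k$ (the vertical shift $ik/K_P$ should not blow anything up, since it only affects the arguments $\cos(k\log p/K_P)$ inside the exponential sums, which are bounded by $1$), and uniform in $\delta > 1$ (which is immediate once everything is phrased per-$k$). \textbf{The main obstacle} I anticipate is controlling the Euler product $\mathbb{E}[|F_P(s)-F_P(s_0)|^2]$ sharply enough to extract the full $t^2 = |s-s_0|^2$ saving: a crude bound would give $\ll \mathbb{E}[|F_P(s)|^2] + \mathbb{E}[|F_P(s_0)|^2] \ll \log P$ per term, hence $(\log P)^{2}$ after summation — too weak by $(\log P)^{1.01}$. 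Getting the difference structure to manifest as a genuine $|s-s_0|^2 (\log P)^{O(1)}$ with the exponent of $\log P$ strictly less than $2.02 - 0.99 = 1.03$ is the crux; I expect this comes from a mean-value-theorem estimate $F_P(s) - F_P(s_0) = \int_{s_0}^{s} F_P'(w)\,dw$ combined with a second-moment bound $\mathbb{E}[\sup_{|w-s_0|\le 1/(2K_P)} |F_P'(w)|^2] \ll (\log P)^{3}\,\mathbb{E}[|F_P|^2]$ or so (derivative of an Euler product of ``length'' $\log P$ gains a $\log P$), making the whole inner term $\ll (1/K_P^2)\cdot(\log P)^3 \cdot \log P / K_P = (\log P)^{4 - 1.01 - 2.02}$ — borderline, so I'd track constants carefully, possibly splitting $F_P'$ prime-by-prime and using Lemma~\ref{lemma_Xu} directly on the paired product rather than going through a sup bound.
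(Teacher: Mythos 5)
Your overall strategy --- compute the per-$(k,t)$ second moment via orthogonality of the $f(p)$, exploit that $|t|\leqslant 1/(2K_P)$ is tiny, note uniformity in $k$, then average --- is the same skeleton as the paper's proof, but as written the proposal has two concrete problems. First, your bookkeeping of the averaging is wrong, and it misleads you about the size of the required saving: the sum over $|k|\leqslant \delta K_P$ has $\asymp \delta K_P$ terms, each carrying the weight $\frac{1}{\delta}\int_{-1/(2K_P)}^{1/(2K_P)}{\rm d}t \asymp \frac{1}{\delta K_P}$, so the total measure is $O(1)$; the crude bound $\mathbb{E}|F_P(s)-F_P(s_0)|^2 \ll \mathbb{E}|F_P(s)|^2+\mathbb{E}|F_P(s_0)|^2 \ll \log P$ therefore already gives $\ll \log P$, not $(\log P)^2$, and the deficit to be recovered is only $(\log P)^{0.01}$, not $(\log P)^{1.01}$. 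Because of this miscount you go hunting for a full $t^2$-saving and end up proposing a bound $\mathbb{E}[\sup_{|w-s_0|\leqslant 1/(2K_P)}|F_P'(w)|^2]\ll (\log P)^3\,\mathbb{E}|F_P|^2$ that is not justified as stated (a supremum inside the expectation needs an argument, and the extra factor $\mathbb{E}|F_P|^2$ is spurious); the correct elementary version is Cauchy--Schwarz on $F_P(s)-F_P(s_0)=\int_0^t F_P'(s_0+iu)\,{\rm d}u$ followed by Fubini, with no supremum.

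Second, the central per-term estimate is left unproven and hedged between two routes. The clean way (and the paper's way, quoting Lemma 2 of Harper's character-sum paper) is the exact identity, valid for every $k$ since $s_0=ik/K_P$ is purely imaginary,
$$
\mathbb{E}\Big[\big|F_P(s_0+it)-F_P(s_0)\big|^2\Big]=\sum_{\substack{n\geqslant 1\\ P(n)\leqslant P}}\frac{|n^{-it}-1|^2}{n},
$$
which is exactly what your expansion $|F_P(s)|^2-2\mathrm{Re}\big(F_P(s)\overline{F_P(s_0)}\big)+|F_P(s_0)|^2$ yields once you use the Dirichlet-series form $F_P(s)=\sum_{P\text{-smooth }n}f(n)n^{-1/2-s}$ and $\mathbb{E}[f(n)\overline{f(m)}]=\mathbb{1}_{n=m}$, rather than asymptotic Euler-product evaluations of the type you sketch (which do not obviously produce the difference structure). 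From this identity the paper inserts $|n^{-it}-1|\ll\min\{|t|\log n,1\}\leqslant\min\{\log n/K_P,1\}$, splits at $n\leqslant P^{\log_2 P}$, and bounds the tail by a smooth-number estimate, giving $\ll(\log_2 P)^2(\log P)^{0.98}+(\log P)^{0.99}$ per term, hence the lemma after the $O(1)$ averaging. Alternatively, your $t^2$ idea does close the argument with no splitting at all: $|n^{-it}-1|\leqslant|t|\log n$ and $\sum_{P\text{-smooth }n}(\log n)^2/n\asymp(\log P)^3$, so each term is $\ll K_P^{-2}(\log P)^3=(\log P)^{0.98}$. So the approach is salvageable in a couple of lines, but as it stands the key estimate is missing and the exponent accounting is incorrect.
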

\begin{proof}
By using \cite[lemma 2]{Harper_charac}, we have
$$
\begin{aligned}
    & \sum_{|k|\leqslant \delta K_P}   \frac{1}{\delta}\int_{-\frac{1}{2 K_P}}^{\frac{1}{2 K_P}}\mathbb{E}\bigg[  \bigg|F_P\bigg(\frac{ik}{ K_P}+it \bigg) - F_P\bigg(\frac{ik}{ K_P}\bigg)\bigg|^2  \bigg] {\rm d}t
    \\ & = \sum_{|k|\leqslant \delta K_P}   \frac{1}{\delta}\int_{-\frac{1}{2 K_P}}^{\frac{1}{2 K_P}}\sum_{\substack{n \geqslant 1\\ P(n)\leqslant P }} \frac{|n^{-it}-1|^2}{n} {\rm d}t .
\end{aligned}
$$
We have 
$|n^{-it}-1|\ll \min\{|t|\log n,1 \} \leqslant \min \{ \frac{\log n}{ K_P},1 \}$. Thus 
$$
\sum_{\substack{n \leqslant P^{\log_2 P}\\ P(n)\leqslant P }} \frac{|n^{-it}-1|^2}{n} \ll \frac{(\log_2 P)^2}{(\log P)^{0.02}}\log P
$$
and for $n>P^{\log_2 P}$, we use the trivial bound $|n^{-it}-1|\ll 1$
$$
\sum_{\substack{n > P^{\log_2 P}\\ P(n)\leqslant P }} \frac{|n^{-it}-1|^2}{n} \ll \sum_{\substack{n > P^{\log_2 P}\\ P(n)\leqslant P }} \frac{1}{n}  \ll  \frac{\log P}{(\log P)^{0.01}}.
$$
This ends the proof.
\end{proof}
\subsection{Reduction of the proof}
    Note that it will suffice to prove Theorem \ref{theoreme_principal_1} and \ref{theoreme_principal_2} for $ 2/3\leqslant q \leqslant 1$. Indeed, following Harper \cite[Organization remarks 1.3]{Harper}, let $f$ be a Rademacher or Steinhaus Random multiplicative function, if we assume that 
    $$
        \mathbb{E}\bigg[ \bigg|\frac{1}{\sqrt{y}} M(x,y;f)\bigg|^{4/3}\bigg] \ll \bigg(  \min\bigg\{ 1, \theta \sqrt{\log_2 x} + \frac{3}{\sqrt{\log_2 x}}\bigg\} \bigg)^{2/3},
    $$
    then by H\"older's inequality, we have for $0\leqslant q \leqslant 2/3$
    $$
    \begin{aligned}
            \mathbb{E}\bigg[ \bigg|\frac{1}{\sqrt{y}} M(x,y;f)\bigg|^{2q} & \leqslant \bigg(\mathbb{E}\bigg[ \bigg|\frac{1}{\sqrt{y}}M(x,y;f)\bigg|^{4/3}\bigg]\bigg)^{3q/2}
            \\ & \ll  \bigg(  \min\bigg\{ 1, \theta \sqrt{\log_2 x} + \frac{1}{\sqrt{\log_2 x}}\bigg\} \bigg)^{q}.
    \end{aligned}
    $$
    For the lower bound, let $0 \leqslant q\leqslant 2/3$, if we assume that 
    $$
    \mathbb{E}\bigg[ \bigg|\frac{1}{\sqrt{y}} M(x,y;f)\bigg|^{4/3}\bigg]\asymp 1
    $$
    and
    $$
    \mathbb{E}\bigg[ \bigg|\frac{1}{\sqrt{y}}M(x,y;f)\bigg|^{3/2}\bigg]\asymp 1,
    $$
By applying, we have H\"older's, inequality
$$
\begin{aligned}
    1 \asymp & \mathbb{E}\bigg[ \bigg|\frac{1}{\sqrt{y}} M(x,y;f)\bigg|^{4/3}\bigg]  
    \\ & \leqslant \bigg(\mathbb{E}\bigg[ \bigg|\frac{1}{\sqrt{y}}M(x,y;f)\bigg|^{2q}\bigg]\bigg)^{\frac{1}{6(3/2-2q)}}\bigg(\mathbb{E}\bigg[ \bigg|\frac{1}{\sqrt{y}}M(x,y;f)\bigg|^{3/2}\bigg]\bigg)^{\frac{4-6q}{3(3/2-2q)}}
    \\& \ll \bigg(\mathbb{E}\bigg[ \bigg|\frac{1}{\sqrt{y}}M(x,y;f)\bigg|^{2q}\bigg]\bigg)^{\frac{1}{6(3/2-2q)}}.
\end{aligned} 
$$
Thus $$
\mathbb{E}\bigg[ \bigg|\frac{1}{\sqrt{y}} M(x,y;f)\bigg|^{2q}\bigg] \asymp 1.
$$    
\section{The case of random multiplicative functions}
\subsection{ The proof of the upper bound.}
\subsubsection{Intermediate upper bound.}\label{Intermediate upper bound}
The initial stage (Lemma \ref{sum_over_K}) involves establishing a connection between the $L^1$ norm of the random sums and a specific mean of the squared random Euler products.
For the sake of readability, we define the following quantities 
\begin{equation}\label{definition_G}
    G=G(x,\theta,q):= \min\bigg\{\theta \sqrt{\log_2 x}+\frac{1}{(1-q)\sqrt{\log_2 x}},1\bigg\},
\end{equation}
$$
x_{k}:=x^{{\rm e}^{-(k+1)}},
$$
and
$$
K:= \lfloor \log_3 x \rfloor.
$$
From this point onward, we consider $f$ as a Rademacher or Steinhaus random multiplicative function. $P(n)$ denotes the largest prime factor of $n$ and $P(1)=1$. We define also
\begin{equation}\label{def_M+_M-}
    M^{+}(x,y,z;f):= \sum_{\substack{x<n\leqslant x+y \\ P(n)\leqslant z }}f(n), \,\,\,\,\,\,\,\,\,\,\,\, M^{-}(x,y,z;f):= \sum_{\substack{x<n\leqslant x+y \\ P(n)> z }}f(n), 
\end{equation}
\begin{equation}\label{def_M_k_N_k}
M_{k}(x,y;f):= \sum_{\substack{ x< n \leqslant x+y\\ x_k< P(n) \leqslant x_{k-1}}} f(n), \,\,\,\,\,\,\,\,\, N_{k}(x,y;f):= \sum_{\substack{ x< n \leqslant x+y\\  P(n) \leqslant x_{k}}} f(n)   
\end{equation}
\begin{equation}\label{S_k}
    S_k(x,z;f):= \sum_{\substack{x<n\leqslant z \\ P(n)\leqslant x_{k} }}f(n).
\end{equation}
Note that
$$
M(x,y;f) = N_{K}(x,y;f)+\sum_{ 0\leqslant k \leqslant K} M_{k}(x,y;f) + M^{-}(x,y,x;f).
$$
We set $\lVert . \rVert_r:= (\mathbb{E}[|.|^r])^{\frac{1}{r}}$, by Minkowski’s inequality, for all $2/3 \leqslant q \leqslant 1$, we have
$$
\begin{aligned}
    \big\|M(x,y;f) \big\|_{2q}  \leqslant & \big\|   N_{K}(x,y;f)  \big\|_{2q} + \sum_{ 0\leqslant k \leqslant K} \big\|  M_{k}(x,y;f) \big\|_{2q}  +  \big\|  M^{-}(x,y,x;f)  \big\|_{2q}.
\end{aligned}
$$
Let's start by quantities $N_{K}(x,y;f)$ and $M^{-}(x,y,x;f).$
\begin{lemma}\label{p_n_less_x_power_-(K+1)} Let $f$ be a Rademacher or Steinhaus multiplicative function. Let $2 \leqslant y \leqslant x$ such that $ \frac{x}{y} \leqslant \exp \big(  \sqrt{\log_2 x} \big)$. Recall that $K= \lfloor \log_3 x \rfloor$. For $x$ large enough, we have uniformly for $2/3 \leqslant q \leqslant 1$ 
    $$
    \mathbb{E}\bigg[ \big| N_{K}(x,y;f) \big|^{2} \bigg]+ \,\,\, \mathbb{E}\bigg[ \big|   M^{-}(x,y,x;f)  \big|^{2} \bigg] \ll \frac{y}{\log x}.
    $$
\end{lemma}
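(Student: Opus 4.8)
The plan is to bound the two second moments separately, in each case using orthogonality $\mathbb{E}[f(n)\overline{f(m)}]=\mathbb{1}_{n=m}$ (in the Rademacher case $\mathbb{1}_{n=m}$ restricted to squarefree $n$) to reduce the mean square to a counting problem over the relevant integers in $]x,x+y]$, and then estimating that count by a Rankin-type (smooth-number) argument. Concretely, since the $f(n)$ for distinct $n$ are orthogonal,
$$
\mathbb{E}\big[|N_K(x,y;f)|^2\big]\leqslant \#\{x<n\leqslant x+y:\ P(n)\leqslant x_K\},\qquad
\mathbb{E}\big[|M^-(x,y,x;f)|^2\big]\leqslant \#\{x<n\leqslant x+y:\ P(n)> x\},
$$
with equality up to the squarefree restriction. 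The second set is essentially empty: an integer $n\in]x,x+y]$ with a prime factor exceeding $x$ must itself be prime and equal to that prime factor times $1$, so $n=p$ with $x<p\leqslant x+y$; but then writing $n=p\cdot m$ forces $m=1$, and there are at most $O(y/\log x)$ such primes by Chebyshev/Brun--Titchmarsh, which already gives the claimed $\ll y/\log x$ for that term. (One should be slightly careful about the boundary case $P(n)=x$ vs $P(n)>x$, but the count of $n\leqslant x+y$ divisible by a prime in $]x^{1/2},x+y]$ is still $O(y/\log x)$ by the same divisor argument.)

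The substantive part is the smooth-number count $\Psi(x,y;x_K):=\#\{x<n\leqslant x+y:\ P(n)\leqslant x_K\}$ with $x_K=x^{{\rm e}^{-(K+1)}}$ and $K=\lfloor\log_3 x\rfloor$, so that $u:=\log x/\log x_K={\rm e}^{K+1}\asymp \log_2 x$. The target is $\Psi(x,y;x_K)\ll y/\log x$. I would obtain this from the standard Rankin bound: for any $\sigma\in(0,1)$,
$$
\Psi(x,y;x_K)\leqslant \sum_{\substack{x<n\leqslant x+y\\ P(n)\leqslant x_K}}1
\leqslant \Big(\frac{x+y}{x}\Big)^{\sigma}\!\!\sum_{\substack{n\leqslant x+y\\ P(n)\leqslant x_K}}\!\Big(\frac{x}{n}\Big)^{\sigma}
\ll \Big(\tfrac{x+y}{x}\Big)^{\sigma} x^{\sigma}\!\!\prod_{p\leqslant x_K}\Big(1-\frac1{p^{\sigma}}\Big)^{-1}.
$$
Choosing $\sigma=1-\tfrac{c}{\log x_K}$ for a suitable constant $c$, the Euler product is $\exp\big(O(u\log u)\big)=\exp\big(O(\log_2 x\cdot\log_3 x)\big)$, while $x^{\sigma-1}=\exp(-c\log x/\log x_K)=\exp(-c\,{\rm e}^{K+1})=\exp(-c\,{\rm e}\cdot{\rm e}^{K})$; since $K=\lfloor\log_3 x\rfloor$ we have ${\rm e}^{K}\asymp\log_2 x$, so $x^{\sigma-1}=\exp(-c'\log_2 x)=(\log x)^{-c'}$, which beats the $\exp(O(\log_2 x\log_3 x))$ loss once... wait — it does not, so I would instead take $K$'s role seriously: the point of $K=\lfloor\log_3 x\rfloor$ is that $\log x_K=\log x\cdot{\rm e}^{-(K+1)}$, and ${\rm e}^{K+1}$ is between $\log_2 x$ and ${\rm e}\log_2 x$, giving $u\asymp \log_2 x$ and hence $\Psi(x,x+y;x_K)/\!(x+y)\ll \rho(u)\ll u^{-u}= (\log_2 x)^{-(1+o(1))\log_2 x}$, which is far smaller than $1/\log x$. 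Thus the clean route is: use the short-interval smooth-number estimate $\Psi(x+y;x_K)-\Psi(x;x_K)\ll (y/\log x)+$ (error), or simply bound $\Psi(x,y;x_K)\leqslant\Psi(x+y,x_K)\ll (x+y)\rho(u)$ and note $\rho(u)=o(1/\log x)$ for $u\asymp\log_2 x$; combined with $y\geqslant 2$ this is weaker than needed only if $y$ is tiny, so for the regime $y\geqslant x/\exp(\sqrt{\log_2 x})$ we have $x+y\leqslant 2x\leqslant 2y\exp(\sqrt{\log_2 x})$ and $(x+y)\rho(u)\ll y\exp(\sqrt{\log_2 x})(\log_2 x)^{-(1+o(1))\log_2 x}\ll y/\log x$.

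The main obstacle is getting the smooth-number count to interact correctly with the short interval and with the hypothesis $x/y\leqslant\exp(\sqrt{\log_2 x})$: one cannot simply cite $\Psi(x,y;z)\sim y\rho(u)$ because $y$ may be much smaller than any power of $x$, so I expect to need either the Rankin upper bound $\Psi(x,y;z)\leqslant y\,x^{o(1)}\cdot(\ldots)$ made quantitative, or a Buchstab/induction argument, and then to verify that the factor $\exp(\sqrt{\log_2 x})$ coming from $x/y$ is absorbed by the super-polynomial decay $\rho(u)$ with $u\asymp\log_2 x$. Once that is in place, adding the easy prime-count bound for $M^-$ completes the lemma. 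I would also record that in the Rademacher case the sum only counts squarefree $n$, which can only decrease the count, so the same bounds apply verbatim.
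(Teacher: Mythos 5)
Your argument is correct, and for the smooth-number term it takes a genuinely different route from the paper. The paper makes the same orthogonality reduction as you do, but then invokes a short-interval smooth-number estimate (Hildebrand--Tenenbaum, Theorem 5.1) to get $\Psi(x+y,x_K)-\Psi(x,x_K)\ll y/(\log y)^{c\log_3 y}\ll y/\log x$ directly, and handles $M^{-}$ by noting the sum is over primes in $]x,x+y]$ and quoting the prime number theorem. You instead discard the short-interval structure entirely: you majorize the interval count by the full count $\Psi(x+y,x_K)\ll (x+y)\rho(u)$ with $u=\log x/\log x_K\asymp\log_2 x$, so $\rho(u)=\exp(-(1+o(1))\log_2 x\,\log_3 x)$, and then use the hypothesis $x/y\leqslant\exp(\sqrt{\log_2 x})=(\log x)^{o(1)}$ to convert the factor $x+y$ into $y$ at negligible cost; this needs only the classical de Bruijn/Hildebrand bound (or Rankin's method with $1-\sigma\asymp\log u/\log x_K$ --- note your first attempt with $1-\sigma=c/\log x_K$ for fixed $c$ indeed fails, as you observed, since the Euler product loses a fixed power of $\log x$), and $x_K$ is far above the smoothness threshold so the cited bound is legitimate. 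For $M^{-}$ your treatment is essentially the paper's, and if anything more careful: since $y\leqslant x$, any $n\in\,]x,x+y]$ with $P(n)>x$ is itself prime, and Brun--Titchmarsh together with $\log y\gg\log x$ (from the hypothesis) gives $\ll y/\log y\ll y/\log x$, which is the sieve input one really needs in a short interval rather than the bare PNT. One small blemish: your parenthetical claim that the number of $n\leqslant x+y$ divisible by a prime in $]x^{1/2},x+y]$ is $O(y/\log x)$ is false (such integers have positive density in the interval), but it addresses a non-issue --- the definition of $M^{-}(x,y,x;f)$ has the strict condition $P(n)>x$, so the remark can simply be deleted without affecting the proof.
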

\noindent This means that the both contributions are more than acceptable.
\begin{proof}
By H\"{o}lder inequality we have 
$$
\begin{aligned}
    \mathbb{E}\bigg[ \big| N_{K}(x,y;f) \big|^{2} \bigg]  \leqslant \Psi(x+y, x_K) -\Psi(x, x_K) 
\end{aligned}
$$
where 
$$
\Psi(x,z) := \sum_{\substack{ n\leqslant x \\ P(n) \leqslant z }}1.
$$
Since 
$ \frac{x}{y} \leqslant \exp \big(  \sqrt{\log_2 x} \big)$ and by applying \cite[theorem 5.1]{Hiledbrand_tenenbaum}, we get
$$
\Psi(x+y, x_K) -\Psi(x, x_K) \ll \frac{y}{(\log y)^{c \log_3 y}}\ll \frac{y}{\log x}.
$$
where $c$ is a constant.
In other hand, since $\log(\frac{x}{y})\ll \sqrt{\log_2 x} $ and by prime number theorem, we have
$$
\mathbb{E}\bigg[ \big|  M^{-}(x,y,x;f) \big|^2\bigg]= \mathbb{E}\bigg[ \bigg|\sum_{ x < p\leqslant x+y} f(p)\bigg|^2\bigg] \ll \frac{y}{\log x}. 
$$ 
\end{proof}
\noindent The problem is reduced to study
$
\big\| M_k(x,y;f) \big\|_{2q}.
$
\noindent We set 
\begin{equation}\label{definition_F_k}
    F_k(s):= \prod_{p\leqslant x_k} \bigg(1+a_f\frac{f(p)}{p^{1/2+s}}\bigg)^{a_f},
\end{equation}
where $a_f$ id defined by \eqref{definition_a_f}. We have the following useful following lemma:
\begin{lemma}[Harper]\label{lemmaharper01}
Let $f$ be a Rademacher or Steinhaus multiplicative function. Let $x$ be large. We have, uniformly for $\frac{2}{3} \leqslant  q \leqslant  1$ and $0\leqslant k \leqslant \lfloor \log_3  x \rfloor $
\begin{equation}\label{keyequaltion99}
    \max_{n \in \mathbb{Z}  }\frac{1}{(|n|+1)^{1/8}} \mathbb{E}\bigg[ \bigg( \int_{  -1/2}^{1/2} \big|F_k(it +in -k/\log x) \big|^2 {\rm d}t \bigg)^q\bigg] \ll \bigg(\frac{{\rm e}^{-k}\log x }{1+(1-q)\sqrt{\log_2  x}} \bigg)^q.
\end{equation}
\end{lemma}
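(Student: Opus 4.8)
The plan is to derive this from the already-quoted results of Harper together with the Euler product / Parseval machinery set up above, following \cite{Harper} but keeping track of the dependence on $n$. First I would fix $k$ and $n$ and write $\sigma:=-k/\log x$, so $x_k^{\sigma}=e^{-k\cdot e^{-(k+1)}}$ is bounded away from $0$ and $\infty$; this choice of $\sigma$ is exactly Harper's "$-1/\log z$"-type translation that makes $\prod_{p\le x_k}(1+a_f f(p)/p^{1/2+\sigma})^{a_f}$ have mean square of the right order. By Parseval's identity (Lemma \ref{parseval}), applied to the Dirichlet series whose coefficients are $\mathbf{1}_{P(n)\le x_k}$-restricted convolutions of $a_f\cdot\mathbf{1}_{\text{primes}}$, one converts $\int_{-1/2}^{1/2}|F_k(it+in+\sigma)|^2\,dt$ into a weighted count of smooth numbers; more directly, I would invoke the identity already used by Harper that
$$
\int_{-1/2}^{1/2}\big|F_k(it+in+\sigma)\big|^2\,dt \ll \sum_{\substack{m\ge 1\\ P(m)\le x_k}}\frac{|g(m)|^2}{m^{1+2\sigma}}\cdot\frac{1}{(\text{something})}
$$
so that the $in$-shift only rotates coefficients by $m^{-in}$ and leaves the modulus untouched; hence the mean square is independent of $n$, and the $(|n|+1)^{1/8}$ factor in the statement is there only to control the higher moment via interpolation, not the second moment.

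The core estimate is the $q$-th moment bound for $\int_{-1/2}^{1/2}|F_k(it+\sigma)|^2\,dt$, which is precisely \cite[Key Proposition / Lemma 6 and its consequences]{Harper}: using Lemma \ref{lemma_Xu} (equation \eqref{eq_Xu1}) to compute the relevant two-point Euler product expectations, one gets $\mathbb{E}\big[\int_{-1/2}^{1/2}|F_k(it+\sigma)|^2\,dt\big]\asymp e^{-k}\log x$, and then Harper's barrier/martingale argument upgrades this to
$$
\mathbb{E}\Big[\Big(\int_{-1/2}^{1/2}|F_k(it+\sigma)|^2\,dt\Big)^q\Big]\ll\Big(\frac{e^{-k}\log x}{1+(1-q)\sqrt{\log_2 x}}\Big)^q,
$$
the extra saving $1/(1+(1-q)\sqrt{\log_2 x})$ coming from the "ballot problem" input Lemma \ref{gaussian_approximation} (the probability that a Gaussian walk of length $\asymp\log_3 x$ — reflecting the primes from $x_k^{e}$ down to small primes — stays below the barrier). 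I would then handle the $n$-dependence: for $|n|\le 1$ this is the displayed bound verbatim; for larger $|n|$, split $F_k(it+in+\sigma)=\prod_{p\le e^{1/|n|}}\cdots\times\prod_{e^{1/|n|}<p\le x_k}\cdots$, bound the first (short) product crudely by something like $(|n|+1)^{o(1)}$ in $L^{2}$ after taking $q$-th powers, and for the second product invoke \eqref{eq_Xu2} of Lemma \ref{lemma_Xu} (valid precisely when $x>e^{1/|t|}$, here with the prime variable exceeding $e^{1/|n|}$), which shows the $in$-shift contributes only an $O(1)$ to the exponent; the net loss is polynomial in $|n|$ with a tiny exponent, absorbed by the $(|n|+1)^{-1/8}$ weight.

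The main obstacle I anticipate is not any single estimate but the bookkeeping that makes the saving factor $1+(1-q)\sqrt{\log_2 x}$ come out with the right constant and uniformly in $k\le\lfloor\log_3 x\rfloor$: one must run Harper's conditioning on the partial Euler products over the dyadic-in-$\log\log$ ranges of primes, verify that each conditional increment is (after taking logarithms) close enough to a Gaussian with variance in $[1/20,20]$ to apply Lemma \ref{gaussian_approximation}, and check that the error terms $O(1/(\sqrt{x}\log x))$ and $O(1)$ from Lemma \ref{lemma_Xu} do not accumulate across the $\asymp\log_3 x$ ranges. Since the statement is attributed to Harper, the cleanest route is to cite the relevant proposition of \cite{Harper} for the $n=0$ case and only supply the (routine) argument extending it to general $n\in\mathbb{Z}$ via \eqref{eq_Xu2} and the crude short-product bound, as sketched above.
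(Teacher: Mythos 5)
The paper's own proof of Lemma \ref{lemmaharper01} is a pure citation: Harper's Key Propositions 1--2 (Steinhaus) and 3--4 (Rademacher) already contain the shift by $in$ and the weight $(|n|+1)^{-1/8}$ uniformly, exactly as deduced in Harper's paragraph ``Proof of the upper bound in Theorem 1, assuming Key Propositions 1 and 2''. Your plan instead cites Harper only for the $n=0$ case and promises a ``routine'' extension to general $n\in\mathbb{Z}$, and it is in that extension that the gaps lie. First, the proposed splitting of the Euler product at $e^{1/|n|}$ is vacuous: for $|n|\geqslant 1$ one has $e^{1/|n|}\leqslant e$, so the ``short product'' contains essentially no primes and cannot absorb anything. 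Second, Lemma \ref{lemma_Xu} is stated for Steinhaus $f$ only, and both \eqref{eq_Xu1} and \eqref{eq_Xu2} control a two-point (second-moment type) Euler product expectation; they say nothing about transporting the ballot-problem saving $1/(1+(1-q)\sqrt{\log_2 x})$ in the $q$-th moment to a shifted height, which is the whole content of the lemma. Your observation that the mean square is independent of $n$ likewise does not yield the $q$-th moment bound.

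What does work, and what the paper implicitly relies on, is this: in the Steinhaus case the map $(f(p))_p\mapsto(f(p)p^{-in})_p$ preserves the law of the random variables, so the process $t\mapsto F_k(it+in-k/\log x)$ has exactly the same distribution as $t\mapsto F_k(it-k/\log x)$ and the $q$-th moment is literally independent of $n$; no splitting or correlation estimate is needed. In the Rademacher case there is no such translation invariance, the Gaussian increments in the barrier argument acquire height-dependent means and variances involving $\cos(2t\log p)$-type sums (compare Lemma \ref{Key_3_2} in this paper), and the uniformity in $n$ with only a $(|n|+1)^{1/8}$ loss is precisely what Harper's Key Propositions 3--4 prove with the shift built into the statement -- it is not a routine corollary of the $n=0$ case. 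So your proposal either should simply cite those propositions (as the paper does), or it needs a genuinely new argument for the Rademacher shifts; as written, the $n$-extension step would fail.
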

\begin{proof}
    For Steinhaus case, it is a direct result from \cite[Key Proposition 1]{Harper} and \cite[Key Proposition 2]{Harper}. For Rademacher case, the result can be deduced easily from \cite[Key Proposition 3]{Harper} and \cite[Key Proposition 4]{Harper}. As it is done in the paragraph entitled: “Proof of the upper bound in Theorem 1, assuming Key Propositions 1 and 2” that works for both cases.
\end{proof}
\begin{lemma}\label{sum_over_K}
Let $f$ be a Rademacher or Steinhaus multiplicative function. Let $x$ be large and recall that $\delta=\frac{x}{y}$ and $K= \lfloor \log_3 x \rfloor$. Uniformly for all $2/3 \leqslant q \leqslant 1$, we have
$$
\begin{aligned}
    \sum_{ 0\leqslant k \leqslant K} \big\|  \frac{1}{\sqrt{y}}M_k(x,y;f) \big\|_{2q} \ll & \sum_{ 0\leqslant k \leqslant K}\bigg\|   \frac{1}{\delta \log x} \int_{-\delta}^{\delta}  \big|F_k(it -k/\log x) \big|^2{\rm d}t\bigg\|_{q}^{1/2} \\& +  \sum_{ 0\leqslant k \leqslant K} \bigg\| \frac{\delta}{\log x} \int_{\delta <|t| \leqslant \delta^5}\frac{\big|F_k(it -k/\log x) \big|^2}{\big|1/2 +it -k/\log x \big|^2} {\rm d}t \bigg\|_{q}^{1/2} 
   \\ &+ \bigg(\frac{1 }{1+(1-q)\sqrt{\log_2  x}} \bigg)^{1/2}.
\end{aligned}
$$
\end{lemma}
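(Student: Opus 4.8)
The plan is to reduce the $L^{2q}$-norm of each dyadic piece $M_k(x,y;f)$ to an integral of a random Euler product over the interval $|t|\leqslant\delta$, by combining a conditioning argument (conditioning on the small primes $p\leqslant x_k$) with Parseval's identity (Lemma \ref{parseval}). First I would fix $k$ and write $M_k(x,y;f)=\sum_{x_k<p\leqslant x_{k-1}}f(p)\,S_p$ where $S_p$ collects the $x_k$-smooth cofactor contributions; conditioning on $(f(p))_{p\leqslant x_k}$, the variables $f(p)$ with $x_k<p\leqslant x_{k-1}$ are independent of the $S_p$, so one can apply a conditional Khintchine/Rosenthal-type inequality (in the Steinhaus case, the $L^{2q}$ version of the fact that $\mathbb{E}[|\sum a_p f(p)|^{2q}\mid\mathcal{F}_{x_k}]\asymp(\sum|a_p|^2)^q$ for $q\leqslant 1$) to bound $\|M_k(x,y;f)\|_{2q}^{2q}$ by $\mathbb{E}\big[(\sum_{x_k<p\leqslant x_{k-1}}|S_p|^2)^q\big]$. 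The inner sum $\sum_p|S_p|^2$ is, up to the weight coming from the length $y$ of the interval, exactly the kind of smooth-number sum that Parseval converts into a mean square of $F_k$: writing the smooth partial sums via Lemma \ref{parseval} at $\sigma\asymp 1/\log x$ produces $\int_{\mathbb{R}}|F_k(\sigma+it)/(\sigma+it)|^2\,{\rm d}t$, and the $1/\sqrt{y}$ normalization together with $\delta=x/y$ accounts for the prefactor $\tfrac{1}{\delta\log x}$.

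Next I would split that Parseval integral over $t$ according to $|t|\leqslant\delta$, $\delta<|t|\leqslant\delta^5$, and $|t|>\delta^5$. On the central range $|t|\leqslant\delta$ one has $|\sigma+it|\asymp\sigma\asymp 1/\log x$ there (after a change of variables rescaling $t$ by $1/\log x$, or directly since the relevant oscillation scale is $1/\log x$ and $\delta$ is at most a power of $\log x$), so this piece contributes $\tfrac{1}{\delta\log x}\int_{-\delta}^{\delta}|F_k(it-k/\log x)|^2\,{\rm d}t$ after absorbing constants — this is the first term. The mild shift $-k/\log x$ in the real part is harmless and matches the normalization in Lemma \ref{lemmaharper01}. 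On the range $\delta<|t|\leqslant\delta^5$ one has $|\sigma+it|\asymp|t|$, and keeping the $1/|t|^2$ decay but bounding it crudely by $\delta^2/|t|^2$ on this dyadic-type window (or rather using the exact $|1/2+it-k/\log x|^{-2}$) gives the second term with its $\tfrac{\delta}{\log x}$ prefactor. The tail $|t|>\delta^5$ must be shown to contribute at most $O\big(\tfrac{y}{\log x}\cdot\tfrac{1}{1+(1-q)\sqrt{\log_2 x}}\big)$ after summing over $k$ and taking $L^q$ norms and square roots; here I would use Lemma \ref{lemmarachid2} to control $\mathbb{E}[|F_k(it)|^2]$ uniformly, the rapid decay of $1/|t|^2$, and crucially the fact that on such a large range of $t$ the length-$y$ interval is too short to see coherent contributions, so these terms telescope into the harmless third term $\big(1+(1-q)\sqrt{\log_2 x}\big)^{-1/2}$; Lemma \ref{perturabtion} is the tool that quantifies the cost of replacing $F_k$ at a point by $F_k$ near it and feeds the summation over $k$.

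Finally I would take $L^q$-norms throughout, use $(\mathbb{E}[|A+B|^q])^{1/q}\leqslant(\mathbb{E}[|A|^q])^{1/q}+(\mathbb{E}[|B|^q])^{1/q}$ (Minkowski, valid since $q\in[2/3,1]$), and then pass from the $2q$-th norm of $M_k$ to the square root of the $q$-th norm of the integral — this is exactly where $\|M_k\|_{2q}=(\mathbb{E}[|M_k|^{2q}])^{1/(2q)}$ becomes $\big(\mathbb{E}[(\text{integral})^q]\big)^{1/(2q)}=\|(\text{integral})\|_q^{1/2}$, explaining the $\|\cdot\|_q^{1/2}$ on the right-hand side. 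The main obstacle I anticipate is the conditional moment inequality in the first step: for Steinhaus variables and non-integer exponent $2q$ one needs a clean $L^{2q}$ analogue of the $L^2$ orthogonality that does not lose more than a constant, and one has to be careful that the cofactor sums $S_p$ depend only on primes $\leqslant x_k$ so that the conditioning genuinely decouples them from the $f(p)$ in the range $(x_k,x_{k-1}]$ — handling the Rademacher case (where squarefreeness couples things slightly) and making the Parseval step respect the restriction $x<n\leqslant x+y$ rather than $n\leqslant x$ are the remaining technical points, both of which follow Harper's template in \cite{Harper} closely.
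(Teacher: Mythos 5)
Your overall template (condition on the primes $p\leqslant x_k$, pass to an $L^2$ computation conditionally, convert to an Euler--product integral by Parseval, split $|t|\leqslant\delta$, $\delta<|t|\leqslant\delta^5$, $|t|>\delta^5$, and reassemble with a Minkowski-type inequality for $\lVert\cdot\rVert_q^{1/2}$) is the same as the paper's, but three steps as you describe them would not go through. First, the decomposition $M_k=\sum_{x_k<p\leqslant x_{k-1}}f(p)S_p$ with $S_p$ ``$x_k$-smooth'' is not available: an integer $x<n\leqslant x+y$ with $x_k<P(n)\leqslant x_{k-1}$ may have several prime factors in $(x_k,x_{k-1}]$, so the cofactor of a single prime is not $\mathcal{F}_k$-measurable and your conditional Khintchine step does not decouple. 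The paper instead sums over all integers $d>1$ composed of primes in $(x_k,x_{k-1}]$, writes $M_k=\sum_d f(d)N_k(x/d,y/d;f)$, and uses conditional H\"older plus exact conditional orthogonality in the $d$-variable; this is the fixable part of your plan, but it must be stated over $d$, not over primes.

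The more serious gap is the Parseval step and the central range. The Dirichlet series of $z\mapsto N_k(z,z/\delta;f)$ is $A(s)=\big((1+1/\delta)^{s}-1\big)F_k(s)$, and the factor $\big|(1+1/\delta)^{1/2+it-k/\log x}-1\big|^2\ll\min\{|1/2+it-k/\log x|^2/\delta^2,\,1\}$ is precisely what produces the $1/\delta^2$ on $|t|\leqslant\delta$ and hence the prefactor $\frac{1}{\delta\log x}$; you drop this factor entirely and replace its effect by the claim $|\sigma+it|\asymp 1/\log x$ on all of $|t|\leqslant\delta$, which is false as soon as $|t|\gg 1/\log x$ (and $\delta$ can be a power of $\log x$). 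Your bookkeeping ``$1/\sqrt{y}$ plus $\delta=x/y$ gives $\frac{1}{\delta\log x}$'' is off by a factor $\delta^2$ for the same reason — this factor is the whole short-interval saving, so it cannot be absorbed into constants. Relatedly, you skip the smoothing over $d$ (the $X={\rm e}^{\sqrt{\log x}}$ averaging, the comparison of $N_k(x/d,y/d)$ with $N_k(x/t,y/t)$ plus the $S_k$ error terms, and the sieve bound $\sum_{t/(1+1/X)<d\leqslant t}X/d\ll 1/\log t$), without which one cannot pass from the discrete sum over $d$ to the weighted integral that Parseval requires. Finally, for the tail $|t|>\delta^5$ your proposed tools (the uniform bound $\mathbb{E}|F_k|^2\ll\log x/{\rm e}^k$ from Lemma \ref{lemmarachid2}, the $1/|t|^2$ decay, and Lemma \ref{perturabtion}) cannot yield the third term: a mean-square bound alone gives at best $\asymp({y}/{\delta^4})^q$, which is $\asymp y^q$ when $\delta\asymp 1$ and carries no $\big(1+(1-q)\sqrt{\log_2 x}\big)^{-1}$ saving, and Lemma \ref{perturabtion} is a discretization device used in the character section, not here. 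The paper handles this range by splitting into unit intervals in $n$, using $\delta^{-5/4}(|n|+1)^{-7/4}$ weights, and invoking the low-moment bound of Lemma \ref{lemmaharper01}, which is the actual source of the term $\big(1+(1-q)\sqrt{\log_2 x}\big)^{-1/2}$ in the statement.
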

\begin{proof}
Let $\mathcal{F}_k$ be a $\sigma$-algebra generated by $\big\{ f(p), p \leqslant x_k \big\}$. We define $\mathbb{E}_k [\,.\,] := \mathbb{E}[\,. \,|\, \mathcal{F}_{k}]$. By H\"{o}lder's inequality, we have
$$
\begin{aligned}
    \big\| M_k(x,y;f) \big\|_{2q} & = \bigg(\mathbb{E}\bigg[\mathbb{E}_k \bigg[ \big| M_k(x,y;f) \big|^{2q}\bigg]\bigg]\bigg)^{1/2q}
    \\ & = \bigg(\mathbb{E}\bigg[\mathbb{E}_k \bigg[ \bigg| \sum_{d>1}^{x_k, x_{k-1}} f(d) N_k\bigg(\frac{x}{d},\frac{y}{d};f\bigg) \bigg|^{2q}\bigg]\bigg]\bigg)^{1/2q}
    \\ & \leqslant \bigg \| \sum_{d>1}^{x_k, x_{k-1}} \bigg|  N_k\bigg(\frac{x}{d},\frac{y}{d};f\bigg) \bigg|^2  \bigg\|_q^{1/2}.
\end{aligned}
$$
We proceed now by replacing $ \sum_{d>1}^{x_k, x_{k-1}}  \big| N_k\big(\frac{x}{d},\frac{y}{d};f\big) \big|^2$ by a
smoothed version. Set $X:= ~{\rm e}^{\sqrt{\log x}}$. We have
$$
\begin{aligned}
    \sum_{d>1}^{x_k, x_{k-1}} \bigg| N_k\bigg(\frac{x}{d},\frac{y}{d};f\bigg) \bigg|^2 & = \sum_{d>1}^{x_k, x_{k-1}} \frac{X}{d} \int_{d}^{d(1+\frac{1}{X})} \bigg|N_k\bigg(\frac{x}{d},\frac{y}{d};f\bigg) \bigg|^2 {\rm d}t
    \\ & \ll \sum_{d>1}^{x_k, x_{k-1}} \frac{X}{d} \int_{d}^{d(1+\frac{1}{X})} \bigg| N_k\bigg(\frac{x}{t},\frac{y}{t};f\bigg) \bigg|^2 {\rm d}t 
    \\ & \,\,\,\,\,\,\,\,\,\,\,\,\,\, + \sum_{d>1}^{x_k, x_{k-1}} \frac{X}{d} \int_{d}^{d(1+\frac{1}{X})} \bigg| S_k\bigg(\frac{x+y}{t},\frac{x+y}{d};f\bigg) \bigg|^2 {\rm d}t
    \\ & \,\,\,\,\,\,\,\,\,\,\,\,\,\, + \sum_{d>1}^{x_k, x_{k-1}} \frac{X}{d} \int_{d}^{d(1+\frac{1}{X})} \bigg| S_k\bigg(\frac{x}{t},\frac{x}{d};f\bigg) \bigg|^2 {\rm d}t.
\end{aligned}
$$
Using H\"{o}lder’s inequality and a mean square calculation, we have
\begin{equation}\label{inequality_0101}
    \begin{aligned}
    \mathbb{E}\bigg[ \bigg( \sum_{d \leqslant x+y}^{x_k, x_{k-1}} \frac{X}{d} \int_{d}^{d(1+\frac{1}{X})} \bigg| S_k\bigg(\frac{x+y}{t},\frac{x+y}{d};f\bigg) \bigg|^2 {\rm d}t \bigg)^q \bigg] & \leqslant \bigg( \sum_{d \leqslant x+y}^{x_k, x_{k-1}} \big(1+ \frac{x+y}{dX}\big)\bigg)^q.
\end{aligned}
\end{equation}
Using \cite[Number Theory Result 1]{Harper}, we have 
$$
 \sum_{d \leqslant x+y}^{x_k, x_{k-1}} 1 \ll 2^{-{\rm e}^{k}} \frac{x}{\log x}\ll 2^{-{\rm e}^{k}} \frac{y}{(\log x)^{0.98}}.
$$
We have also
$$
 \sum_{d \leqslant x+y}^{x_k, x_{k-1}} \frac{x+y}{mX} \ll \frac{x\log x}{X}.
$$
Recall that $X={\rm e}^{\sqrt{\log x}} $ and $\log(\frac{x}{y}) \ll \sqrt{\log_2 x}$. So taking 2$q$-th roots and summing over $0 \leqslant k \leqslant K$ leads to an acceptable overall contribution.
$$
    \sum_{0 \leqslant k \leqslant K} \Bigg( \mathbb{E}\bigg[ \bigg( \sum_{d \leqslant x+y}^{x_k, x_{k-1}} \frac{X}{d} \int_{d}^{d(1+\frac{1}{X})} \bigg| S_k\bigg(\frac{x+y}{t},\frac{x+y}{d};f\bigg) \bigg|^2 {\rm d}t \bigg)^q \bigg]\Bigg)^{1/2q} \ll \frac{\sqrt{y}}{(\log x)^{0.49}}.
$$
We prove, following the same steps, that
$$
   \sum_{0 \leqslant k \leqslant K} \Bigg( \mathbb{E}\bigg[ \bigg( \sum_{d \leqslant x+y}^{x_k, x_{k-1}} \frac{X}{d} \int_{d}^{d(1+\frac{1}{X})} \bigg| S_k\bigg(\frac{x}{t},\frac{x}{d};f\bigg) \bigg|^2 {\rm d}t \bigg)^q \bigg]\Bigg)^{1/2q}  \ll \frac{\sqrt{y}}{(\log x)^{0.49}}.
$$
We are left with 
\begin{equation}\label{eq_1}
        \mathbb{E}\bigg[ \bigg( \sum_{d \leqslant x+y}^{x_k, x_{k-1}} \frac{X}{d} \int_{d}^{d(1+\frac{1}{X})} \bigg| N_k\bigg(\frac{x}{t},\frac{y}{t};f\bigg) \bigg|^2 {\rm d}t \bigg)^q \bigg].
\end{equation}
 by swapping the order of the sum and integral, we get
 $$
 \begin{aligned}
      \sum_{d \leqslant x+y}^{x_k, x_{k-1}} \frac{X}{d} \int_{d}^{d(1+\frac{1}{X})} & \bigg| N_k\bigg(\frac{x}{t},\frac{y}{t};f\bigg) \bigg|^2 {\rm d}t 
      \\ & \ll \int_{x_k}^{(x+y)(1+\frac{1}{X})} \sum_{ \frac{t}{1+1/X}< d \leqslant t}^{x_k, x_{k-1}} \frac{X}{d} \bigg| N_k\bigg(\frac{x}{t},\frac{y}{t};f\bigg) \bigg|^2 {\rm d}t.
 \end{aligned}
 $$
 Using  a standard sieve estimate, we get the following bound
 \begin{equation}\label{Harper_sieve}
      \sum_{ \frac{t}{1+1/X}< d \leqslant t}^{x_k, x_{k-1}} \frac{X}{d} \ll \frac{1}{\log t}.
 \end{equation}
By making a substitution $z = x/t$ in the integral, we get
$$
\begin{aligned}
    &\!\!\!\!\!\!\!\!\!\!\!\!\!\!\!\!\!\!\!\!\!\!\!\!\!\int_{x_k}^{(x+y)(1+\frac{1}{X})} \sum_{ \frac{t}{1+1/X}< d \leqslant t}^{x_k, x_{k-1}} \frac{X}{d} \bigg| N_k\bigg(\frac{x}{t},\frac{y}{t};f\bigg) \bigg|^2 {\rm d}t 
    \\& \ll \int_{x_k}^{(x+y)(1+\frac{1}{X})} \frac{1}{\log t} \bigg| N_k\bigg(\frac{x}{t},\frac{y}{t};f\bigg) \bigg|^2 {\rm d}t
    \\ & = x \int^{ x/x_k}_{\frac{1}{(1+1/\delta)(1+1/X)}} \frac{1}{\log (x/z)} \big| N_k(z,z/\delta;f) \big|^2 \frac{{\rm d}z}{z^2}.
\end{aligned}
$$
To obtain a desirable relationship with the parameter $k$ in our final estimations, note that if $z \leqslant \sqrt{x}$, we have $\log(x/z)\gg \log x $, whereas if $  \sqrt{x} < z \leqslant x^{1-{\rm e}^{-(k+1)}}$ we have $\log(x/z) \gg {\rm e}^{-k}\log x$. Thus in any case we have $ \log(x/z) \gg z^{-2k/\log x} \log x$ so the above term is 
$$
\begin{aligned}
    & \ll  \frac{x}{\log x} \int^{ x/x_k}_{\frac{1}{(1+1/\delta)(1+1/X)}}  \big| N_k(z,z/\delta;f) \big|^2 \frac{{\rm d}z}{z^{2-2k/\log x}}
    \\ & \leqslant \frac{x}{\log x} \int^{ + \infty }_{0}  \big| N_k(z,z/\delta;f) \big|^2 \frac{{\rm d}z}{z^{2-2k/\log x}}.
\end{aligned}
$$
By applying Parseval's identity (Lemma \ref{parseval}) where $A(s)$ of Lemma \ref{parseval} in this case is 
$$
\begin{aligned}
    A(s)& = \int_{0}^{+\infty}N_k(z,z/\delta;f) z^{-s-1} {\rm d}z
    \\ &= \bigg(\bigg(\frac{1}{\delta}+1\bigg)^s -1\bigg)\int_{0}^{+\infty}\sum_{\substack{ n\leqslant z \\ P(n) \leqslant x_k}} f(n) z^{-s-1} {\rm d}z
    \\  &= \bigg(\bigg(\frac{1}{\delta}+1\bigg)^s -1\bigg)F_k(s),
\end{aligned}
$$
we get
\begin{equation}\label{eq_2}
\begin{aligned}
    &  \frac{x}{\log x} \int^{ + \infty }_{0}  \big| N_k(z,z/\delta;f) \big|^2 \frac{{\rm d}z}{z^{2-2k/\log x}}
 \\ &  \ll \frac{x}{\log x} \int_{-\infty}^{+ \infty} \frac{\big| (1+1/\delta)^{1/2+it - k/\log x} -1 \big|^2}{\big|1/2 +it -k/\log x \big|^2} \big|F_k(it -k/\log x) \big|^2{\rm d}t.  
\end{aligned}
\end{equation}
Note for $|t|\leqslant \delta$ we have 
$$
\frac{\big| (1+1/\delta)^{1/2+it - k/\log x} -1 \big|^2}{\big|1/2 +it -k/\log x \big|^2} \ll \frac{1}{\delta^2}
$$
and for $|t| > \delta $ we have
$$
\frac{\big| (1+1/\delta)^{1/2+it - k/\log x} -1 \big|^2}{\big|1/2 +it -k/\log x \big|^2} \ll \frac{1}{\big|1/2 +it -k/\log x \big|^2}.
$$
Thus, we have the term \eqref{eq_2} is 
$$
\ll A_1(x,\delta,k) + A_2(x,\delta,k) + A_3(x,\delta,k)
$$
where
$$
A_1(x,\delta,k):=    \frac{x}{\delta^2 \log x} \int_{-\delta}^{\delta}  \big|F_k(it -k/\log x) \big|^2 {\rm d}t= \frac{y}{\delta \log x} \int_{-\delta}^{\delta}  \big|F_k(it -k/\log x) \big|^2{\rm d}t,
$$
$$
A_2(x,\delta,k):=    \frac{x}{\log x} \int_{\delta <|t| \leqslant \delta^5}\frac{\big|F_k(it -k/\log x) \big|^2}{\big|1/2 +it -k/\log x \big|^2} {\rm d}t 
$$
and
$$
A_3(x,\delta,k):=  \frac{x}{\log x} \int_{|t| >\delta^5}\frac{\big|F_k(it -k/\log x) \big|^2}{\big|1/2 +it -k/\log x \big|^2} {\rm d}t.
$$
 Recall that $2q\geqslant 4/3>1$, thus by Minkowski's inequality, we get
 $$
 \begin{aligned}
      \sum_{ 0\leqslant k \leqslant K} \big\|  \frac{1}{\sqrt{y}}M_k(x,y;f) \big\|_{2q} \ll & \sum_{i=1}^3 \sum_{ 0\leqslant k \leqslant K}\big\|  A_i(x,\delta,k)\big\|_{q}^{1/2} + \frac{\sqrt{y}}{(\log x)^{0.49}}.
 \end{aligned}
 $$
 The expressions $A_1(x,\delta,k)$ and $A_2(x,\delta,k)$ are as required. Now, let's make a slight additional adjustment to $A_3(x,\delta,k)$. We have
$$
\begin{aligned}
    A_3(x,\delta,k) & \ll \frac{x}{\log x} \sum_{|n| > \delta^5  }\frac{1}{n^2}  \int_{ |t| \leqslant 1/2} \big|F_k(it +in -k/\log x) \big|^2 {\rm d}t
    \\ & \leqslant \frac{x}{ \delta^{5/4}\log x} \sum_{|n| > \delta^5  }\frac{1}{n^{7/4}}  \int_{ |t| \leqslant 1/2} \big|F_k(it +in -k/\log x) \big|^2 {\rm d}t 
    \\ & \leqslant \frac{y}{ \log x} \sum_{n \in \mathbb{Z}  }\frac{1}{(|n|+1)^{7/4}}  \int_{ |t| \leqslant 1/2} \big|F_k(it +in -k/\log x) \big|^2 {\rm d}t.
\end{aligned}
$$
We deduce the expectation 
$$
\begin{aligned}
    &\mathbb{E}\bigg[ \bigg( \frac{y}{ \log x} \sum_{n \in \mathbb{Z}  }\frac{1}{(|n|+1)^{7/4}}  \int_{ |t| \leqslant 1/2} \big|F_k( it +in -k/\log x) \big|^2 {\rm d}t \bigg)^q\bigg] 
    \\ & \leqslant \big(\frac{y}{ \log x}\big)^q \sum_{n \in \mathbb{Z}  }\frac{1}{(|n|+1)^{7q/4}} \mathbb{E}\bigg[ \bigg( \int_{ |t| \leqslant 1/2} \big|F_k(it +in -k/\log x) \big|^2 {\rm d}t \bigg)^q\bigg]
    \\ & \leqslant \big(\frac{y}{ \log x}\big)^q \max_{n \in \mathbb{Z}  }\frac{1}{(|n|+1)^{1/8}} \mathbb{E}\bigg[ \bigg( \int_{ |t| \leqslant 1/2} \big|F_k(it +in -k/\log x) \big|^2 {\rm d}t \bigg)^q\bigg].
\end{aligned}
$$
By Lemma \ref{lemmaharper01}, we deduce then 
\begin{equation}
    \mathbb{E}\big[ \big( A_3(x,\delta,k) \big)^q\big] \ll \bigg(\frac{y{\rm e }^{-k} }{1+(1-q)\sqrt{\log_2  x}} \bigg)^q.
\end{equation}
This ends the proof.
\end{proof}
\subsubsection{Multiplicative Chaos}

This section is dedicated to prove the Proposition \ref{prop_harper01}. This Proposition is an adjusted version of \cite[Key proposition 1]{Harper}, \cite[Key proposition 2]{Harper} for Steinhaus case and \cite[Key proposition 3]{Harper}, \cite[Key proposition 4]{Harper} for Rademacher case. We will keep many of the constants analogous to the ones that arise in Harper's proof \cite{Harper}.
   \begin{prop}\label{prop_harper01}
                Let $f$ be a Rademacher or Steinhaus random multiplicative function and $x$ be large. Let $\theta >0$ be such that $\frac{x}{y}=(\log x)^{\theta}$. For $0\leqslant k \leqslant K= \lfloor \log_3 x \rfloor$ and $2/3\leqslant q\leqslant 1$, we have uniformly
        \begin{equation}\label{eq_chao1}
        \begin{aligned}
                \!\!\!\!\!\mathbb{E}\bigg[\bigg(  \frac{{\rm e}^k}{\delta\log x }\int_{-\delta}^{\delta} &\big| F_{k}(-k/\log x +it)\big|^2 {\rm d}t\bigg)^q   \bigg]
                \\ & \ll \bigg(\min\bigg\{ 1, \theta \sqrt{\log_2 x} +\frac{1}{(1-q)\sqrt{\log_2 x}}\bigg\}\bigg)^q
        \end{aligned}
        \end{equation}
        and 
        \begin{equation}\label{eq_chao2}
        \begin{aligned}
            \mathbb{E}\bigg[\bigg( \frac{{\rm e}^k\delta }{\log x }\int_{ \delta< |t|\leqslant \delta^5}&\frac{\big| F_{k}(  - k/\log x +it)\big|^2}{\big| 1/2-k/\log x +it \big|^2} {\rm d}t\bigg)^q   \bigg] \\ & \ll \bigg( \min\bigg\{ 1, \theta \sqrt{\log_2 x} +\frac{1}{(1-q)\sqrt{\log_2 x}}\bigg\}\bigg)^q.
        \end{aligned}
        \end{equation}
    \end{prop}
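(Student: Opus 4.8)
The plan is to transplant Harper's barrier‑and‑low‑moments argument from \cite{Harper} to the present short‑interval setting; the only structurally new ingredient is that the ``barrier'' controlling the random Euler product must be raised by a width‑dependent amount, namely $\theta\sqrt{\log_2 x}$ in the notation of the sketch. Write $P:=x_k=x^{{\rm e}^{-(k+1)}}$, $\sigma_k:=k/\log x$ and $K_P:=(\log P)^{1.01}$, and note $\log P\asymp{\rm e}^{-k}\log x$ is large uniformly for $0\leqslant k\leqslant K$. First I would apply Lemma~\ref{perturabtion} with this $P$ to replace $\tfrac1\delta\int_{-\delta}^{\delta}\big|F_k(-\sigma_k+it)\big|^2{\rm d}t$ by the sampled sum $\tfrac{1}{\delta K_P}\sum_{|j|\leqslant\delta K_P}\big|F_k(-\sigma_k+ij/K_P)\big|^2$, incurring an error whose $L^1$‑norm is $\ll(\log P)^{1-c}$ for an absolute $c>0$ (by Lemma~\ref{perturabtion} together with a routine Cauchy--Schwarz cross term, using $\mathbb{E}|F_k(-\sigma_k+it)|^2\asymp\log P$ from Lemma~\ref{lemmarachid2}); since $q\leqslant1$, concavity of $x\mapsto x^q$ turns this into an $L^q$‑error which after multiplication by ${\rm e}^k/\log x$ is negligible uniformly in $k$ compared with the target bound. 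The weighted integral of \eqref{eq_chao2} reduces to the same object: its weight $\big|1/2-\sigma_k+it\big|^{-2}\asymp t^{-2}$ is essentially constant over each length‑$1/K_P$ block, and $\tfrac{{\rm e}^k\delta}{\log x}t^{-2}\ll\tfrac{{\rm e}^k}{\delta\log x}$ for $|t|>\delta$, so \eqref{eq_chao2} becomes a convergent tail‑weighted version of \eqref{eq_chao1}. It thus suffices to bound $\mathbb{E}[N^q]$ for $N:=\tfrac{{\rm e}^k}{\delta K_P\log x}\sum_{|j|\leqslant\delta K_P}\big|F_k(-\sigma_k+ij/K_P)\big|^2$.

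Next, following Harper, I would decompose the primes $p\leqslant P$ into the ranges $I_\ell:=(P^{{\rm e}^{-(\ell+1)}},P^{{\rm e}^{-\ell}}]$ for $0\leqslant\ell\leqslant L_0:=\lfloor\log_2 P\rfloor$, together with a final block of very small primes, and factor $F_k(-\sigma_k+it)=\prod_\ell E_\ell(t)$. For each sampled point $t=j/K_P$ put $W_\ell(t):=\sum_{m\leqslant\ell}\log|E_m(t)|$; since $\sum_{p\in I_m}1/p\asymp1$, a Gaussian approximation (as in Harper's Key Propositions) lets $(W_\ell(t))_\ell$ behave like a real random walk with increments of bounded variance. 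I would then introduce the good event
$$
\mathcal{G}:=\Big\{\,W_\ell(j/K_P)\leqslant\theta\sqrt{\log_2 x}+\widetilde h(\ell)\ \text{ for all }\ 0\leqslant\ell\leqslant L_0\ \text{and}\ |j|\leqslant\delta K_P\,\Big\},
$$
where $\widetilde h(\ell)$ is Harper's barrier (the one appearing in his Key Propositions, compatible with Lemma~\ref{gaussian_approximation}); the constant‑in‑$\ell$ shift $\theta\sqrt{\log_2 x}$ is the only departure from \cite{Harper}, and its size is dictated by the union bound over the $\asymp\delta K_P$ sampled points. With $q':=\tfrac{1+q}{2}$, Hölder's inequality then gives, as in the sketch,
$$
\mathbb{E}[N^q]\leqslant\mathbb{E}\big[(\mathbf{1}_{\mathcal{G}}N)^q\big]+\mathbb{P}[\mathcal{G}\text{ fails}]^{\,1-q/q'}\big(\mathbb{E}[N^{q'}]\big)^{q/q'}.
$$

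To estimate the first term I would use concavity, $\mathbb{E}[(\mathbf{1}_{\mathcal{G}}N)^q]\leqslant\mathbb{E}[\mathbf{1}_{\mathcal{G}}N]^q$, then condition on the primes in $I_0,\dots,I_{L_0-1}$ and evaluate the conditional expectation of $|E_{L_0}(t)|^2$ via Lemma~\ref{lemmarachid2} (invoking Lemma~\ref{lemma_Xu} in the Steinhaus case to handle the correlation of $F_k(-\sigma_k+it)$ with the shift $\sigma_k$); the barrier, fed into Lemma~\ref{gaussian_approximation} with $n\asymp\log_2 x$ and height raised by $\theta\sqrt{\log_2 x}$, then forces $\mathbb{E}[\mathbf{1}_{\mathcal{G}}N]\ll\theta\sqrt{\log_2 x}+\tfrac{1}{(1-q)\sqrt{\log_2 x}}$. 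For the second term, a union bound over the $\asymp\delta K_P$ points combined with Lemma~\ref{gaussian_approximation} bounds $\mathbb{P}[\mathcal{G}\text{ fails}]$ — the $\delta K_P$ loss being absorbed because the walk must pay a power of $\delta$ to reach the raised height — so that this term is of the same order after the Hölder split. Finally $\mathbb{E}[N^{q'}]$ has the same shape as $\mathbb{E}[N^q]$ with $q$ replaced by $q'$, so iterating the displayed inequality $O(\log_2 x)$ times, exactly as in the proof of the upper bound in Theorem~1 of \cite{Harper}, and summing the resulting series yields $\mathbb{E}[N^q]\ll\big(\theta\sqrt{\log_2 x}+\tfrac{1}{(1-q)\sqrt{\log_2 x}}\big)^q$; the competing ``$1$'' in the minimum is the trivial bound $\mathbb{E}[N^q]\leqslant\mathbb{E}[N]^q\ll1$ from Lemma~\ref{lemmarachid2} (which also covers $q=1$). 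Running the identical argument on the $t^{-2}$‑weighted sum gives \eqref{eq_chao2}.

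The hard part will be the calibration just described: the extra barrier height $\theta\sqrt{\log_2 x}$ has to be chosen large enough that the union bound over the $\asymp\delta K_P$ discrete points does not swamp $\mathbb{P}[\mathcal{G}\text{ fails}]$, yet small enough that the conditioned expectation $\mathbb{E}[\mathbf{1}_{\mathcal{G}}N]$ still beats its trivial size — and it is exactly this tension that produces the threshold $\theta\asymp1/\sqrt{\log_2 x}$ separating Theorems~\ref{theoreme_principal_1} and~\ref{theoreme_principal_2}. A secondary nuisance is carrying the Steinhaus and Rademacher computations in parallel, the shifted second Euler factor being handled by Lemma~\ref{lemma_Xu} in the former and by Lemma~\ref{lemmarachid2} in the latter.
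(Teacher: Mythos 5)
Your overall architecture is the right one (and is essentially the paper's): barrier events on the partial Euler products, a H\"older split between ``$\mathcal{G}$ holds'' and ``$\mathcal{G}$ fails'', Gaussian approximation plus the ballot-problem estimate of Lemma~\ref{gaussian_approximation}, and an iteration over $q\mapsto q'=\frac{1+q}{2}$. But there is a genuine quantitative gap in the one place you identify as the new ingredient: the size of the extra barrier height. You raise the barrier by $\theta\sqrt{\log_2 x}$ and assert that this is ``dictated by the union bound over the $\asymp\delta K_P$ sampled points'', the loss being absorbed ``because the walk must pay a power of $\delta$ to reach the raised height''. That is false: to exceed a barrier raised by $h$ the Chebyshev/union-bound cost is only $\mathrm{e}^{-2h}$, so a raise of $h=\theta\sqrt{\log_2 x}$ yields $\mathrm{e}^{-2\theta\sqrt{\log_2 x}}$, which is nowhere near enough to cancel the $\asymp\delta K_P$ (and, for \eqref{eq_chao2}, $\asymp\delta^{5}K_P$) points, since $\delta=\mathrm{e}^{\theta\log_2 x}$. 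In the critical regime $\theta\asymp1/\sqrt{\log_2 x}$ your failure probability is not even $o(1)$, so the term $\mathbb{P}[\mathcal{G}\text{ fails}]^{1-q/q'}\big(\mathbb{E}[N^{q'}]\big)^{q/q'}$ cannot be absorbed and the iteration does not close. The paper instead raises the barrier by a multiple of $\log\delta$, namely $5\theta\log_2 x$ (see $a(q,t)$ in \eqref{deinition_of_c}), so that the Chebyshev bound produces $\delta^{-10}$, beating the $\delta^{5}$-sized family of discretised points in Lemma~\ref{key4}; the saving $\theta\sqrt{\log_2 x}$ in the statement is then \emph{not} the barrier height but the ballot-problem probability of the raised barrier under the tilted measure $\widetilde{\mathbb{P}}_t$, i.e.\ $a(q,t)/\sqrt{R}\asymp\theta\log_2 x/\sqrt{\log_2 x}$ (Lemma~\ref{key3}). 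You have conflated the barrier height with the final saving factor; with your calibration the ``good-event'' term would even come out as $\theta+\frac{1}{(1-q)\sqrt{\log_2 x}}$, which for $\frac{1}{\sqrt{\log_2 x}}\ll\theta\ll1$ would contradict the matching lower bound of Theorem~\ref{theoreme_principal_2}.

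Two secondary remarks. First, your preliminary step of replacing the integral by the $1/K_P$-spaced sampled sum via Lemma~\ref{perturabtion} is workable but is not what the paper does here (that lemma is used only in the character-sum section); the paper keeps the integrals, handles $|t|\leqslant1/\sqrt{\log_2 x}$ by a trivial first-moment bound, and discretises only inside the event definition through the points $v_m(t)$, which is what makes the union bound in Lemma~\ref{key4} and the Berry--Esseen comparison in Lemma~\ref{Key_3_2} run. Second, for \eqref{eq_chao2} the range $|t|\leqslant\delta^{5}$ forces the union bound over even more points, which is precisely why the paper's raise is $5\theta\log_2 x$ rather than $\theta\log_2 x$; any repaired version of your argument must build this in.
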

    \begin{proof}[Proof of Theorem \ref{theoreme_principal_1} assuming Proposition \ref{prop_harper01}.]
        It is a direct result form\\ Lemma~\ref{sum_over_K} and Proposition \ref{prop_harper01}.
    \end{proof}
\noindent Let $B$ a large constant, we define
\begin{equation}\label{def_D(t)}
    D(t)=\left\{
  \begin{array}{@{}ll@{}}
    \lceil \log (1/|t|)\rceil+ B+1  & \text{if}\ \frac{1}{\sqrt{\log_2 x}}< |t|\leqslant  \frac{1}{2}  \\
    B+1 & \text{ if } 1/2<|t|\leqslant \delta^5.
  \end{array}\right.
\end{equation}
Let $I_{r}$ to be the $r$-th increment of $F(s)$ Euler product 
\begin{equation}\label{def_I_r}
    I_{r}(u,v)=\prod_{x_{r+1} < p \leqslant  x_r }\bigg(1+a_f\frac{f(p)}{p^{1/2+iv+u}} \bigg)^{a_f}.
\end{equation}
In order to simplify the notations, we set 
\begin{equation}\label{def_u_m}
    u_m = u_m(t):= \lfloor\log_2  x \rfloor-D(t)-m.
\end{equation}
We set 
\begin{equation}\label{def_of_S_R}
    R=R(t):=\lfloor\log_2  x \rfloor-D(t), \,\,\, S:= \lfloor\log_2  x \rfloor-B
\end{equation}
and $x_{r}:= x^{{\rm e}^{-(r-1)}}$. We observe that $R(t) \leqslant S$ for any $\frac{1}{\sqrt{\log_2 x}} \leqslant |t| \leqslant  \delta^5$. To prove 
$$\mathbb{E}\bigg[\bigg|\sum_{n \leqslant x}f(n)\bigg|\bigg] \ll \frac{\sqrt{x}}{(\log_2 x)^{1/4}}, $$
Harper \cite{Harper} has introduced an event $``\mathcal{G}^{Rad}(k)"$ for Rademacher case and $``\mathcal{G}(k)"$ for Steinhaus case (see the introduction \cite[section 4.1]{Harper} and \cite[section 4.4]{Harper}). We shall introduce an analogue of these events, which is slightly different from the Harper's original definitions. In order to define it, we need first to discretize $t$. As Harper \cite[section 4.1]{Harper}, for each $|t|\leqslant  \delta^5$, set $w(-1):=t$, and iteratively for each $0 \leqslant  r \leqslant  \log_2  x -2$
\begin{equation}
    w(r)=\frac{\lfloor w(r-1) \log x_r \log_2 x_r  \rfloor}{\log x_r \log_2 x_r}.
\end{equation}
We denote
\begin{equation}\label{def_v_m}
    v_m(t)=v_m:=w(u_m).
\end{equation} 
Now we define the event $\mathcal{G}_q(k,t)$ to be
\begin{equation}
     - a_j(q,t) \leqslant  \sum^{j}_{m=1}  \log \left|I_{u_m}\left(\frac{k}{\log x}, v_m\right)\right| \leqslant  a_j(q,t)
\end{equation}
for all $k\leqslant  j \leqslant  R(t)$, where
\begin{equation}\label{definition_a_k}
    a_j(q,t):= j+a(q,t)+2 \log j
\end{equation}
and
\begin{equation}\label{deinition_of_c}
    a(q,t):= \frac{C}{1-q} +5\theta \log_2 x  + D(t) + \log (D(t)+1)
\end{equation}
for large constant $C>B>1$. Furthermore, we let $\mathcal{G}_q(k)$ denote the event that $\mathcal{G}_q(k,t)$ occurs for all $1/\sqrt{\log_2  x}<|t|\leqslant   \delta^5$. This event allows us to control the Euler product in the ``logarithmic scale''. We begin by giving an estimate of the probability when the event $\mathcal{G}_q(k)$ fails.
\begin{lemma}\label{key4}

Let $f$ be a Rademacher or Steinhaus multiplicative function. For $x$ large enough, and uniformly for $\frac{2}{3} \leqslant  q \leqslant  1-\frac{1}{\sqrt{\log_2 x}}$ and $0\leqslant k\leqslant K$, we have
$$
\mathbb{P}\big[ \mathcal{G}_q(k) \text{ Fails} \big] \ll { \rm e } ^{\frac{-2C}{1-q} }
$$
where $C$ is the same constant as defined in \eqref{deinition_of_c}.
\end{lemma}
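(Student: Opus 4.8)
The plan is to follow Harper's strategy from \cite[section 4]{Harper} for bounding the probability that the barrier event fails, adapting it to the extra $5\theta\log_2 x$ term we have inserted into $a(q,t)$ and to the enlarged range $1/\sqrt{\log_2 x} < |t| \leqslant \delta^5$. First I would union-bound over the discretized points $t$: the event $\mathcal{G}_q(k)$ fails iff $\mathcal{G}_q(k,t)$ fails for some admissible $t$, and after discretization there are only polynomially many (in $\log x$, or in $\delta$) relevant values $v_m(t)$, so it suffices to bound $\mathbb{P}[\mathcal{G}_q(k,t)\text{ fails}]$ for a fixed $t$ and sum. For fixed $t$, the partial sums $\sum_{m=1}^j \log|I_{u_m}(k/\log x, v_m)|$ form a sum of independent real random variables (independence across $m$ because the $I_{u_m}$ involve disjoint blocks of primes), each of bounded mean and variance — the mean and variance are $O(1)$ per increment by the Euler product computation underlying Lemma \ref{lemmarachid2} and Lemma \ref{lemma_Xu}, or rather the normalized logarithm has mean $O(\text{small})$ and variance bounded between absolute constants after the standard normalization. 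One then compares this random walk with a Gaussian walk (via a Sakhanenko/KMT-type coupling, exactly as invoked in \cite[section 4]{Harper}) and applies the ballot-type estimate Lemma \ref{gaussian_approximation}.

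The key computation is to identify the ``height'' parameter $a$ playing the role of the $a$ in Lemma \ref{gaussian_approximation}. Here $a_j(q,t) = j + a(q,t) + 2\log j$, so that after subtracting the linear drift $j$ (which matches the deterministic downward drift of $\log|I_{u_m}|$ coming from the $(1+a_f/p)^{a_f}$ normalization in Lemma \ref{lemmarachid2}) and the $2\log j$ term, the relevant barrier height is $a(q,t) = \frac{C}{1-q} + 5\theta\log_2 x + D(t) + \log(D(t)+1)$. By Lemma \ref{gaussian_approximation}, the probability that the walk stays below the upper barrier for all $k \leqslant j \leqslant R(t)$ — and symmetrically above the lower barrier — is $\ll \min\{1, a(q,t)/\sqrt{R(t)}\}$; wait, that is a lower bound on staying inside, so the failure probability is $\ll 1 - c\,\min\{1,a/\sqrt{R}\}$, which is not directly small. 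The correct move, following Harper, is that we do not want the walk confined to a narrow tube — we want the probability it exits, and the useful bound comes from the \emph{upper} barrier being crossed having probability $\ll e^{-c\,a^2/R} $ only in a regime we will not be in; instead Harper's actual argument bounds $\mathbb{P}[\mathcal{G}_q(k,t)\text{ fails}]$ by summing, over the first index $j_0$ where the barrier is first violated, a conditional probability, and the contribution is geometrically controlled so that the total is $\ll e^{-(a(q,t) - D(t) - \log(D(t)+1))/(\text{const})}$ roughly, i.e. $\ll e^{-C/(1-q)}e^{-5\theta\log_2 x}$ per point $t$; then the sum over the $\ll (\log_2 x)^{O(1)}\delta^{O(1)}$ discretized points $t$ with $|t|\le\delta^5$ is absorbed because $e^{-5\theta\log_2 x} = (\log x)^{-5\theta}$ and $\delta^{5} = (\log x)^{5\theta}$, leaving a net $(\log x)^{-O(\theta)}$ times polynomial-in-$\log_2 x$ factors, hence $\ll e^{-2C/(1-q)}$ after also using the $D(t)+\log(D(t)+1)$ term to kill the sum over dyadic ranges of $|t|$ below $1/2$.

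I would organize the write-up as: (i) fix $t$, reduce to the Gaussian walk via the increments' mean/variance estimates and a coupling, citing the relevant probability results; (ii) apply the ballot estimate to get a bound of the shape $e^{-(\text{linear in }a(q,t))}$ for $\mathbb{P}[\mathcal{G}_q(k,t)\text{ fails}]$, carefully tracking that the $D(t)$, $\log(D(t)+1)$, and $\frac{C}{1-q}$ pieces each contribute an exponential saving; (iii) sum over the discretized $t$, splitting the range $1/\sqrt{\log_2 x} < |t|\leqslant 1/2$ into dyadic blocks where $D(t) \approx \log(1/|t|)+B$ (so $e^{-D(t)}$ beats the number of $t$'s in the block), and handling $1/2 < |t| \leqslant \delta^5$ using that $5\theta\log_2 x$ in $a(q,t)$ overpowers the $\delta^5 = (\log x)^{5\theta}$ points. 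The main obstacle is step (iii): one must choose the constant $B$ (and the exponents in the discretization) so that \emph{both} the dyadic sum for $|t| \leqslant 1/2$ \emph{and} the $|t| \leqslant \delta^5$ sum converge with room to spare, and confirm that the $5\theta\log_2 x$ term — which is our addition beyond Harper — is exactly calibrated to the enlarged $t$-range; the per-point estimates in steps (i)–(ii) are essentially Harper's and go through with only notational changes.
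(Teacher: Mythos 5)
There is a genuine gap: you never actually supply the estimate that makes the lemma work, namely a bound, for a fixed scale $j$ and a fixed discretized point, on the probability that the partial Euler product $\prod_{m=1}^{j}\big|I_{u_m}(k/\log x, v_m)\big|^{\pm 1}$ exceeds ${\rm e}^{a_j(q,t)}$. Your first attempt reaches for the Gaussian coupling and the ballot estimate (Lemma \ref{gaussian_approximation}), and as you yourself notice mid-argument, that lemma bounds the probability of \emph{staying inside} the barriers, so it gives nothing useful for the failure probability; that machinery belongs to Lemma \ref{key3}, not to Lemma \ref{key4}. Your fallback — summing over the first index of violation with ``geometrically controlled'' conditional probabilities yielding ``roughly ${\rm e}^{-a(q,t)/\mathrm{const}}$'' per point — is exactly the step that needs a concrete tool, and none is named. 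The paper's proof needs no Gaussian approximation at all: it splits the failure event over scales $k\leqslant j\leqslant S$ and over the $\ll \delta^5 {\rm e}^{j}(j+B+\cdots)$ discretized values of $v_m(t)$, and at each scale applies Chebyshev's (Markov's) inequality with the second moment $\prod_{m\leqslant j}\mathbb{E}\big[|I_{u_m}|^{\pm 2}\big]\ll {\rm e}^{j}$ (Lemma \ref{lemmarachid2} plus Mertens). The linear term $j$ in $a_j(q,t)$, doubled by the squaring, exactly absorbs the point count ${\rm e}^{j}$ times the second moment ${\rm e}^{j}$; the $2\log j$ term gives $j^{-4}$ and hence a convergent sum over $j$; the $5\theta\log_2 x$ term, again doubled, produces ${\rm e}^{-10\theta\log_2 x}=\delta^{-10}$, which comfortably beats the $\delta^{5}$ in the point count; and the $\frac{C}{1-q}$ term delivers the final ${\rm e}^{-2C/(1-q)}$.

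This last point also exposes a quantitative flaw in your bookkeeping: you claim a per-point saving of ${\rm e}^{-5\theta\log_2 x}=(\log x)^{-5\theta}$ against $\delta^{5}=(\log x)^{5\theta}$ points and assert a net negative power of $\log x$, but these two factors exactly cancel, so your accounting at best breaks even — the missing factor $2$ comes precisely from the second-moment Chebyshev step you did not invoke. Likewise, your plan to spend the $D(t)+\log(D(t)+1)$ part of $a(q,t)$ on the dyadic sum over $|t|\leqslant 1/2$ (after having already discarded it from the per-point exponent) is both internally inconsistent and unnecessary: in the paper's argument these terms are simply dropped via $a_j(q,t)\geqslant \Tilde{a}_j(q)$, since the union bound already has enough room; the $D(t)$ terms are only needed later, in Lemma \ref{key3}.
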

\begin{proof}
Following \cite[ Key propositions 2]{Harper} and \cite[ Key propositions 4]{Harper}, we have
\begin{equation}\label{eqexplaindiscrti}
    \begin{aligned}
\mathbb{P}\big[ \mathcal{G}_q(k) \text{ Fails} \big] \leqslant \Sigma_1 +\Sigma_2
\end{aligned}
\end{equation}
where
$$
\Sigma_1 :=   \sum_{k \leqslant j \leqslant  S} \mathbb{P}\Bigg[ \prod_{m=1}^{j}\left|I_{u_m}\left(\frac{k}{\log x}, v_m(t)\right)\right| >{\rm e}^{a_j(q,t)} \text{ for some } 1/\sqrt{\log_2  x}<|t| \leqslant  \delta^5\Bigg]
$$
and
$$
\Sigma_2 := \sum_{k \leqslant j \leqslant  S} \mathbb{P}\Bigg[ \prod_{m=1}^{j}\left|I_{u_m}\left(\frac{k}{\log x}, v_m(t)\right)\right|^{-1} >{\rm e}^{a_j(q,t)} \text{ for some } 1/\sqrt{\log_2  x}<|t| \leqslant  \delta^5\Bigg].
$$
where $v_m(t)$ is defined in \eqref{def_v_m} and \eqref{def_of_S_R} respectively. Since for each ${\rm e}^{-r-1} \leqslant |t|<{\rm e}^{-r}$, $u_m$ and $v_m(t)$ depend only on $r$. Recall $u_m$ is defined in \eqref{def_u_m}, note that $u_m(t)=~u_m({\rm e}^{-r-1})$ and $v_m(t)=~v_m({\rm e}^{-r-1})$ take only a discrete points in $ \mathcal{T}(x,j)$ with
$$
\begin{aligned}
\mathcal{T}(x,j):=\Bigg\{ & \frac{n}{{\rm e}^{j+B+\langle \log_2 x \rangle} (j+B+\langle \log_2 x \rangle) }: 
\\& \,\,\,\,\,\,\,\,\,\,\,\,\,\,\,\,\,\,\,\,\,\,\,\,\,\,\,\,\,|n| \leqslant   \delta^5{\rm e}^{j+B+\langle \log_2 x \rangle}  (j+B+\langle \log_2 x \rangle) \Bigg\},
\end{aligned}
$$
where $\langle \log_2 x \rangle:=  \log_2 x - \lfloor \log_2 x \rfloor$. Let $\Tilde{a}_j(q) := j+2\log j +\frac{C}{1-q}+\theta\log_2 x$. It's clear that for every $\frac{1}{\sqrt{\log_2 x}}\leqslant |t|\leqslant \delta^5$ and $1\leqslant j\leqslant S$, we have $a_j(q,t) \geqslant \Tilde{a}_j(q) $. We have
$$
\Sigma_1 \ll \!\!\! \sum_{k \leqslant  j \leqslant  S} \sum_{b \in \mathcal{T}(x,j)}\mathbb{E}\left[\prod_{m=1}^{j}\left|I_{u_m}\left(\frac{k}{\log x}, b\right)\right|>{\rm e}^{\Tilde{a}_j(q)} \right].
$$
By Chebychev's inequality, and by using $\prod_{m=1}^{j}\mathbb{E}\left[ \left|I_{u_m}\left(\frac{k}{\log x}, b\right)\right|^2 \right] \ll {\rm e}^j$ (which is combination of Lemma \ref{lemmarachid2} and Mertens' theorem), we have $\Sigma_1$ is less than
$$
\begin{aligned}
& \!\!\!\!\!\!\!\!\!\!\!\!\!\!\!\!\!\!\!\!\!\!\!\!\!\!\!\!\!\!\sum_{k \leqslant  j \leqslant  S}   \delta(j+B+1){\rm e}^{j+B+1} {\rm e}^{-2\Tilde{a}_j(q)} {\rm e}^{j}
\\ & \ll { \rm e } ^{\frac{-2C}{1-q} }\sum_{k \leqslant  j \leqslant  S}  \delta (j+B+1) {\rm e}^{r-10 \theta \log_2 x -4 \log j}   
\\ & =  { \rm e } ^{\frac{-2C}{1-q} } \sum_{k \leqslant  j \leqslant  S}   \frac{j+B+1}{j^4}  
\\ & \ll { \rm e } ^{\frac{-2C}{1-q}}.
\end{aligned}
$$
Since $ \mathbb{E}\left[ \left|I_{u_m}\left(\frac{k}{\log x}, v_m({\rm e }^{-r-1})\right)\right|^2 \right]$ has the same estimate as $ \mathbb{E}\left[ \left|I_{u_m}\left(\frac{k}{\log x}, v_m({\rm e }^{-r-1})\right)\right|^{-2} \right]$ and following the same steps as above, we get $\Sigma_2 \ll { \rm e } ^{\frac{-2C}{1-q}}$. This ends the proof.
\end{proof}
\begin{remark}
The reason behind discretizing $t$ to $v_m(t)$ is to deal with the computation of the probability on the right hand side of the inequality \eqref{eqexplaindiscrti}. Instead of working with a continuous $t$, we have replaced it by a sequence of approximations, this allows us to compute easily this probability.
\end{remark}

\noindent Given a probability space $(\mathbb{P},\Omega,\mathcal{F})$, we can introduce other probability measures and obtain other probability spaces with the same sample space $\Omega$ and the same $\sigma$-algebra of events $\mathcal{F}$. One “easy” way to do that is to start with a nonnegative random variable $\Theta$ such that $\mathbb{E}[\Theta]=1$. Then we can obtain another probability measure 
$$ \widetilde{\mathbb{P}}[A]:= \mathbb{E}[\mathbb{1}_A \Theta] \text{ for all } A \in \mathcal{F}.$$
Let $\widetilde{\mathbb{E}}$ denote the expectation associated to  $\widetilde{\mathbb{P}}$ (i.e for any $Y$ random variable $\widetilde{\mathbb{E}}[Y]:= \mathbb{E}[\Theta Y] $).
With the same way, we define the following probability.\\
$$
\widetilde{\mathbb{P}}_t(A):=\frac{\mathbb{E} \bigg[  \mathbb{1}_A \prod_{p \leqslant  x^{{\rm e}^{-1}}}\left|1+a_f\frac{f(p)}{p^{1 / 2+it}}\right|^{2a_f}\bigg]}{\mathbb{E}\bigg[ \prod_{p \leqslant  x^{{\rm e}^{-1}}}\left|1+a_f\frac{f(p)}{p^{1 / 2+it}}\right|^{2a_f}\bigg]},
$$
where $A$ is an event from $\mathcal{F}$. The next lemma provides an upper bound of $\widetilde{\mathbb{P}}_t\big[\mathcal{G}_q(k,t)\big]$ uniformly for all $\frac{2}{3} \leqslant  q \leqslant  1$, $\frac{1}{\log_2 x} \leqslant |t|\leqslant  \delta^5$ and $0\leqslant k \leqslant K$.
\begin{lemma}\label{key3}
Let $f$ be a Rademacher or Steinhaus multiplicative function. We have uniformly for $\frac{2}{3} \leqslant  q \leqslant  1$ and $\frac{1}{\log_2 x} \leqslant |t|\leqslant  \delta^5$
$$
\widetilde{\mathbb{P}}_t\big[\mathcal{G}_q(k,t)\big] \ll \frac{C  }{(1-q)\sqrt{\log_2  x}}+\theta\sqrt{\log_2 x}+\frac{ D(t) }{\sqrt{\log_2  x}},
$$
where $C$ is the same constant as defined in  \eqref{deinition_of_c}.
\end{lemma}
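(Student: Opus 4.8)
The plan is to run Harper's barrier/ballot‑problem argument \cite{Harper}, but under the tilted measure $\widetilde{\mathbb{P}}_t$ and against the enlarged barrier $a_j(q,t)=j+a(q,t)+2\log j$, which is calibrated to the drift of the relevant random walk so that its one‑sided survival probability is governed by Lemma~\ref{gaussian_approximation}.

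Write $X_m:=\log\bigl|I_{u_m}(k/\log x,\,v_m(t))\bigr|$ and $S_j:=\sum_{m=1}^{j}X_m$, so that $\mathcal{G}_q(k,t)$ is the event that $|S_j|\leqslant a_j(q,t)$ for all $k\leqslant j\leqslant R(t)$. Since $I_{u_m}$ depends only on the primes in $(x_{u_m+1},x_{u_m}]$ and the density defining $\widetilde{\mathbb{P}}_t$ factorises over primes, the $X_m$ remain independent under $\widetilde{\mathbb{P}}_t$; and as we only need an upper bound we may drop the lower barrier, so it suffices to bound $\widetilde{\mathbb{P}}_t[\,S_j\leqslant a_j(q,t)\text{ for all }k\leqslant j\leqslant R(t)\,]$. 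The first substantive step is to compute the first two moments of the $X_m$ under the tilt. Using Lemma~\ref{lemma_Xu} (and its Rademacher counterpart, as in Harper), Mertens' theorem and the prime number theorem, one shows $\widetilde{\mathbb{E}}_t[X_m]=1+O(\varepsilon_m)$ with $\sum_m\varepsilon_m=O(1)$ and $\widetilde{\operatorname{Var}}_t(X_m)\asymp 1$, uniformly in the admissible ranges of $q,t,k,m$: the blocks are chosen so that $\sum_{x_{u_m+1}<p\leqslant x_{u_m}}1/p=1+o(1)$, which pins the drift at $1$ per step, while the real shift $k/\log x=o(1)$ and the replacement of $t$ by its discretisation $v_m(t)$ perturb this only admissibly (one uses here that $|t-v_m(t)|\log p=O\bigl(1/(D(t)+m)\bigr)$ throughout the $m$‑th block). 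Putting $Y_m:=X_m-\widetilde{\mathbb{E}}_t[X_m]$, the barrier condition $S_j\leqslant a_j(q,t)$ becomes $\sum_{m\leqslant j}Y_m\leqslant a(q,t)+2\log j+O(1)$.

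At this point I would invoke Harper's comparison of such Euler‑product random walks with genuine Gaussian walks, which is the heart of his Key Propositions~2 and~4: after a mild truncation of each Euler factor, $(\sum_{m\leqslant j}Y_m)_j$ can be replaced by a centred Gaussian walk with the same block variances, the error being absorbed into the additive $O(1)$ in the barrier and into implied constants. This is the step I expect to be the main obstacle. The ballot estimate is sensitive to the tails of the increments, so one must verify — uniformly over $q$, over the wide range $1/\log_2 x\leqslant|t|\leqslant\delta^5$, and over $0\leqslant k\leqslant K$ — that the tilt by $\prod_{p\leqslant x^{{\rm e}^{-1}}}|1+a_f f(p)p^{-1/2-it}|^{2a_f}$, a per‑factor perturbation of size $1+O(1/p)$, does not spoil the Gaussian approximation; this is precisely the delicate portion of Harper's analysis, now to be re‑run against the larger barrier height $a(q,t)$.

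Granting the Gaussian comparison, Lemma~\ref{gaussian_approximation} applies to a walk of length $R(t)-k=\lfloor\log_2 x\rfloor-D(t)-k=(1+o(1))\log_2 x$ — here one uses $k\leqslant K=\lfloor\log_3 x\rfloor$ and $D(t)\ll\log_3 x$, the latter because $|t|\geqslant 1/\log_2 x$ forces $\log(1/|t|)\leqslant\log_3 x$; if the walk is taken to start at $m=1$ with the barrier active only from $j=k$, one first conditions on the $k$ initial increments, contributing only lower‑order terms. The ballot result then gives
$$
\widetilde{\mathbb{P}}_t\bigl[\mathcal{G}_q(k,t)\bigr]\ll\min\Bigl\{1,\,\frac{a(q,t)+O(1)}{\sqrt{R(t)-k}}\Bigr\}\ll\frac{a(q,t)}{\sqrt{\log_2 x}}.
$$
Finally, substituting $a(q,t)=\frac{C}{1-q}+5\theta\log_2 x+D(t)+\log(D(t)+1)$, dividing by $\sqrt{\log_2 x}$ and bounding $\log(D(t)+1)\ll D(t)$ yields
$$
\widetilde{\mathbb{P}}_t\bigl[\mathcal{G}_q(k,t)\bigr]\ll\frac{C}{(1-q)\sqrt{\log_2 x}}+\theta\sqrt{\log_2 x}+\frac{D(t)}{\sqrt{\log_2 x}},
$$
as claimed.
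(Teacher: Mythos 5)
Your proposal follows essentially the same route as the paper: tilt by $\widetilde{\mathbb{P}}_t$, use independence across prime blocks, show the drift is $1+O(1/m^2)$ per block so the barrier $a_j(q,t)=j+a(q,t)+2\log j$ centres to $a(q,t)+2\log j+O(1)$, compare with a Gaussian walk of length $\asymp\log_2 x$, apply the ballot estimate of Lemma~\ref{gaussian_approximation}, and substitute $a(q,t)$. The one step you defer ("invoke Harper's comparison") is precisely what the paper supplies in Lemma~\ref{Key_3_2} (per-increment Gaussian approximation via characteristic functions and Berry--Esseen) together with the $1/m^2$-grid discretisation and union bound in the proof of Lemma~\ref{Key3_1}, so your sketch is sound but its main technical content is invoked rather than carried out.
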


\begin{proof}[Proof of Proposition \ref{prop_harper01} assuming Lemma \ref{key3}.]
The proof is based on the same argument as in the Steinhaus case \cite[section 4.1]{Harper} in the paragraph entitled ``Proof of the upper bound in Theorem 1, assuming Key Propositions 1 and 2''. By following Harper \cite[section 4.4]{Harper}, we have immediately by H\"{o}lder's inequality and Lemma~\ref{lemmarachid2}
$$
\begin{aligned}
\mathbb{E}\bigg[ \bigg(\int_{\substack{  |t| \leqslant 1/ \sqrt{\log_2  x}}} & \big|F_k(-k/\log x+it)\big|^2 {\rm d}t \bigg)^{q}\bigg] 
\\ &  \leqslant  \bigg(\int_{\substack{ |t| \leqslant 1/ \sqrt{\log_2  x}}} \mathbb{E}\bigg[ \big|F_k(-k/\log x+it)\big|^2 \bigg] {\rm d}t\bigg)^{q}
\\ & \ll \bigg( \frac{\log x}{{\rm e}^{k}\sqrt{\log_2  x}}\bigg)^{q}.
\end{aligned}
$$
It suffices to prove 
        \begin{equation}\label{eq_001}
            \mathbb{E}\bigg[\bigg(  \frac{{\rm e}^k}{G\delta \log x }\int_{\substack{|t|\leqslant \delta \\ |t|\geqslant \frac{1}{\sqrt{\log_2 x}}}} \big| F_{k}(-k/\log x +it)\big|^2 {\rm d}t\bigg)^q   \bigg] \ll 1
        \end{equation}
        and
        \begin{equation}\label{eq_002}
            \mathbb{E}\bigg[\bigg(  \frac{{\rm e}^k\delta }{G \log x }\int_{ \delta< |t|\leqslant \delta^5}\frac{\big| F_{k}(-k/\log x +it)\big|^2}{\big| 1/2-k/\log x +it \big|^2} {\rm d}t\bigg)^q   \bigg] \ll 1
        \end{equation}
        where $G$ is defined in \eqref{definition_G}.
        We prove only the first inequality \eqref{eq_001}, the second one \eqref{eq_002} is proven following exactly the same steps.
        Following Harper \cite{Harper}, more precisely in \textit{Proof of the upper bound in Theorem 1, assuming Key Propositions 1 and 2}.
        \noindent If $1-\frac{1}{\sqrt{\log_2 x}} \leqslant  q \leqslant  1$, we have $G\asymp 1$, thus the upper bound we need to prove is 
$$
\mathbb{E}\bigg[\bigg(  \frac{{\rm e}^k}{\delta \log x }\int_{\substack{|t|\leqslant \delta \\ |t|\geqslant \frac{1}{\sqrt{\log_2 x}}}} \big| F_{k}(-k/\log x +it)\big|^2 {\rm d}t\bigg)^q   \bigg] \ll 1.
$$
By H\"{o}lder's inequality
$$
\begin{aligned}
 \mathbb{E}\bigg[\bigg( \int_{\substack{|t|\leqslant \delta \\ |t|\geqslant \frac{1}{\sqrt{\log_2 x}}}} \big| F_{k}(-k/\log x +it)\big|^2 {\rm d}t\bigg)^q   \bigg] & \leqslant \bigg(  \int_{\substack{|t|\leqslant \delta \\ |t|\geqslant \frac{1}{\sqrt{\log_2 x}}}} \mathbb{E}\big[\big| F_{k}(-k/\log x +it)\big|^2\big] {\rm d}t\bigg)^q.
 \end{aligned}
$$
By using Lemma~\ref{lemmarachid2} together with Mertens' theorem, we have $\mathbb{E}\big[\big|F_k(-k/\log x+it)\big|^2 \big]\ll \frac{\log x}{{\rm e}^k}$. We get then
$$
 \frac{{\rm e}^k}{\delta \log x}  \int_{\substack{|t|\leqslant \delta \\ |t|\geqslant \frac{1}{\sqrt{\log_2 x}}}} \mathbb{E}\big[\big| F_{k}(-k/\log x +it)\big|^2\big] {\rm d}t\ll 1.
$$
Then, it suffices to prove
$$
            \mathbb{E}\bigg[\bigg(  \frac{{\rm e}^k}{G\delta \log x }\int_{\substack{|t|\leqslant \delta \\ |t|\geqslant \frac{1}{\sqrt{\log_2 x}}}} \big| F_{k}(-k/\log x +it)\big|^2 {\rm d}t\bigg)^q   \bigg] \ll 1
$$
uniformly for all $\frac{2}{3} \leqslant  q \leqslant  1-\frac{1}{\sqrt{\log_2 x}} $.  
        Let $\frac{1}{\sqrt{\log_2 x}}\leqslant s\leqslant \frac{1}{6} $ we define $R(s)$ as
        $$
        R(s) := \sup_{1-2s \leqslant q \leqslant 1-s} \mathbb{E}\bigg[\bigg(  \frac{{\rm e}^k}{G\delta \log x }\int_{\substack{|t|\leqslant \delta \\ |t|\geqslant \frac{1}{\sqrt{\log_2 x}}}} \big| F_{k}(-k/\log x +it)\big|^2 {\rm d}t\bigg)^q   \bigg].
        $$
        Note that
        $$
        \begin{aligned}
        &\!\!\!\!\!\!\!\!\!\!\!\!\!\!\!\!\!\!\!\!\!\!\!\!\!\!\!\mathbb{E}\bigg[\bigg( \int_{\substack{|t|\leqslant \delta \\ |t|\geqslant \frac{1}{\sqrt{\log_2 x}}}} \big| F_{k}(-k/\log x +it)\big|^2 {\rm d}t\bigg)^q \bigg]
        \\ & \leqslant \mathbb{E}\bigg[\bigg( \int_{\substack{|t|\leqslant \delta \\ |t|\geqslant \frac{1}{\sqrt{\log_2 x}}}} \mathbb{1}_{\mathcal{G}_q(k,t)} \big| F_{k}(-k/\log x +it)\big|^2 {\rm d}t\bigg)^q \bigg]
        \\ & \,\,\,\,\,\,\,+ \mathbb{E}\bigg[\bigg( \mathbb{1}_{\mathcal{G}_q(k) \text{ Fails}} \int_{\substack{|t|\leqslant \delta \\ |t|\geqslant \frac{1}{\sqrt{\log_2 x}}}} \big| F_{k}(-k/\log x +it)\big|^2 {\rm d}t\bigg)^q \bigg].
        \end{aligned}
        $$
        \noindent Now by using $\mathbb{E}\big[ \big|F_k(-k/\log x+it)\big|^2\big] \asymp \frac{\log x}{{\rm e}^k} $ uniformly for all $t$, following by Lemma \ref{key3}, we have then the last term above is
    $$
    \begin{aligned}
     & \ll\bigg(\frac{1}{\delta G}\int_{\substack{|t| \leqslant  \delta \\ |t| > 1/\sqrt{\log_2  x}}} \widetilde{\mathbb{P}}_t\big[\mathbb{1}_{\mathcal{G}_q(k,t)} \big] {\rm d}t \bigg)^{q}
     \ll \bigg( C + \frac{1}{\delta G\sqrt{\log_2 x}} \int_{\substack{|t| \leqslant  \delta \\ |t| > 1/\sqrt{\log_2  x}}} D(t) {\rm d}t  \bigg)^{q}.
    \end{aligned}
    $$
    Finally, by observing that $\frac{1}{\delta}\int_{|t|\leqslant  \delta}D(t) {\rm d}t \leqslant  2B$, we get
            $$
        \begin{aligned}
         \mathbb{E}\bigg[\bigg( \mathbb{1}_{\mathcal{G}_q(k)} \frac{{\rm e}^k}{G\delta \log x }\int_{\substack{|t|\leqslant \delta \\ |t|\geqslant \frac{1}{\sqrt{\log_2 x}}}} \big| F_{k}(-k/\log x +it)\big|^2 {\rm d}t\bigg)^q   \bigg]
            \ll C^q\leqslant C.
        \end{aligned}
        $$
        Next, for each $1-2s \leqslant q \leqslant 1-s$ we set $q^{'} = \frac{1+q}{2}$, so that $1-s \leqslant q^{'} \leqslant 1-s/2$. Then by H\"{o}lder’s inequality with exponents $\frac{q^{'}}{q^{'}-q}$ and $\frac{q^{'}}{q}$, we have
        $$
        \begin{aligned}
        & \,\,\,\,\, \mathbb{E}\bigg[\bigg( \mathbb{1}_{\mathcal{G}_q(k) \,\, \text{fails}} \frac{{\rm e}^k}{G\delta \log x }\int_{\substack{|t|\leqslant \delta \\ |t|\geqslant \frac{1}{\sqrt{\log_2 x}}}} \big| F_{k}(-k/\log x +it)\big|^2 {\rm d}t\bigg)^q   \bigg]
        \\ & \leqslant \bigg(\mathbb{E}\big[\mathbb{1}_{\mathcal{G}_q(k) \,\, \text{fails}} \big]\bigg)^{(q^{'}-q)/q^{'}} \bigg(\mathbb{E}\bigg[\bigg(  \frac{{\rm e}^k}{G\delta \log x }\int_{\substack{|t|\leqslant \delta \\ |t|\geqslant \frac{1}{\sqrt{\log_2 x}}}} \big| F_{k}(-k/\log x +it)\big|^2 {\rm d}t\bigg)^{q^{'}}   \bigg]\bigg)^{q/q^{'}}
        \\ & \leqslant \big(\mathbb{P}\big[\mathcal{G}_q(k) \,\, \text{ fails} \big]\big)^{s/2} \bigg(\mathbb{E}\bigg[\bigg(  \frac{{\rm e}^k}{G\delta \log x }\int_{\substack{|t|\leqslant \delta \\ |t|\geqslant \frac{1}{\sqrt{\log_2 x}}}}\big| F_{k}(-k/\log x +it)\big|^2 {\rm d}t\bigg)^{q^{'}}   \bigg]\bigg)^{q/q^{'}}.
        \end{aligned}
        $$
        By Lemma \ref{key4}, we have $\big(\mathbb{P}\big[\mathcal{G}_q(k) \,\, \text{ fails} \big]\big)^{s/2} \ll {\rm e}^{-\frac{-Cs}{1-q}} \leqslant {\rm e}^{ -C/2 }$. Let $N$ be large enough such that $\frac{1}{\sqrt{\log x}}< \frac{s}{2^{N}} \leqslant \frac{2}{\sqrt{\log_2 x}}$, we call this quantity $J_s:= \frac{s}{2^N}$. Since $\frac{q}{q^{'}} \leqslant 1$, we have then
        $$
        R(s) \ll C+{\rm e}^{-C/2} (1+R(s/2))\ll C+{\rm e}^{-C/2}R(s/2).
        $$
        Let $A>0$ such that $R(s)\leqslant AC+A{\rm e}^{-C/2}R(s/2)$. We choose $C$ large enough such that $A{\rm e}^{-C/2} <1$.
        By iterating and replacing $s$ by $s/2,s/4,...$ we get at the end
        $$
        \begin{aligned}
            R(s) & \leqslant AC \bigg( \sum_{0\leqslant j\leqslant N-1} \big(A{\rm e}^{-C/2} \big)^{j} \bigg) +\big(A{\rm e}^{-C/2} \big)^{N}R(J_s)
            \\ & \ll 1+ R(J_s).
        \end{aligned}
        $$
        By using H\"{o}lder’s inequality and $\mathbb{E}\big[\big| F_{k}(-k/\log x +it)\big|^2 \asymp \frac{\log x}{{\rm e}^{k}}$ uniformly on $t$, we get
        $$
        \begin{aligned}
            R(J_s) & \ll \sup_{1-2J_s < q \leqslant 1-J_s} \mathbb{E}\bigg[\bigg(  \frac{{\rm e}^k}{G\delta \log x }\int_{\substack{|t|\leqslant \delta \\ |t|\geqslant \frac{1}{\sqrt{\log_2 x}}}} \big| F_{k}(-k/\log x +it)\big|^2 {\rm d}t\bigg)^q   \bigg]
            \\ & \leqslant \sup_{1-2J_s < q \leqslant 1-J_s} \bigg(  \frac{{\rm e}^k}{G\delta \log x }\int_{\substack{|t|\leqslant \delta \\ |t|\geqslant \frac{1}{\sqrt{\log_2 x}}}} \mathbb{E}\big[\big| F_{k}(-k/\log x +it)\big|^2\big] {\rm d}t\bigg)^q
            \\ & \ll \sup_{1-2J_s < q \leqslant 1-J_s} \bigg(\frac{1}{G}\bigg)^q.
        \end{aligned}
        $$ 
        Note that for $1-2J_s < q \leqslant 1-J_s$ we have $\frac{1}{1-q}\asymp \frac{1}{\sqrt{\log_2 x}}$. This implies that $G = \theta \sqrt{\log_2 x} + \frac{1}{(1-q)\sqrt{\log_2 x}}\gg 1$. Thus we have $R(s)\ll 1$ uniformly for $\frac{1}{\sqrt{\log_2 x}}\leqslant s\leqslant \frac{1}{6} $. This ends the proof.
\end{proof}
\noindent\textbf{Proof of Lemma \ref{key3}.}\\
 We start by the following lemma.
\begin{lemma}\label{Key3_1}
Let $f$ be a Rademacher or Steinhaus multiplicative function. We have uniformly for $\frac{2}{3} \leqslant  q \leqslant  1$, $1/\sqrt{\log_2 x} \leqslant  |t|\leqslant  \delta^5$ and $ 0 \leqslant k \leqslant K$
\begin{equation}\label{upperbound_event_with_gaussian}
\widetilde{\mathbb{P}}_t[\mathcal{G}_q(k,t)] \ll \mathbb{P}\bigg[  \sum_{m=1}^j G_m \leqslant  a(q,t)+2 \log j+O(1) \text{ for all }  k\leqslant  j \leqslant  R  \bigg]
\end{equation}
and 
\begin{equation}\label{lower_bound_for_part_2}
    \widetilde{\mathbb{P}}_t[\mathcal{G}_q(k,t)] \gg \mathbb{P}\bigg[ -2k -O(1) \leqslant \sum_{m=1}^j G_m \leqslant  a(q,t)+2 \log j+O(1) \text{ for all } k\leqslant j \leqslant  R  \bigg]
\end{equation}
where $G_m$ are independent Gaussian random variables, each having mean $0$ and variance: for Rademacher case $\sum_{x_{u_m}^{1/{\rm e}} < p \leqslant  x_{u_m}} \frac{1+\cos(2v_m(t)\log p)}{2p}$, and for Steinhaus case $\sum_{x_{u_m}^{1/{\rm e}} < p \leqslant  x_{u_m}} \frac{1}{2p}.$

\noindent In particular, we have 
\begin{equation}\label{equation_P_asymp}
    \widetilde{\mathbb{P}}_t[\mathcal{G}_q(k,t)] \asymp \frac{C}{(1-q)\sqrt{\log_2 x}} +\theta\sqrt{\log _2 x}+\frac{D(t)}{\sqrt{\log_2 x}}
\end{equation}
where $C$ is defined in \eqref{deinition_of_c}.
\end{lemma}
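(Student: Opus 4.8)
The plan is to transplant Harper's change-of-measure argument (\cite[Sections~4.1 and 4.4]{Harper}) to our setting, carrying along the extra parameters $\theta$ and $D(t)$ that enter the barriers $a_j(q,t)$ from \eqref{definition_a_k}. First I would linearise the increments: expanding $\log|1+a_f f(p)p^{-1/2-s}|$ in powers of $p^{-1/2}$, discarding $p^2$ and higher prime powers, and using that the shift $k/\log x$ is harmless because $k\leqslant K=\lfloor\log_3 x\rfloor$, one obtains, up to a uniformly bounded error,
\[
\log\Bigl|I_{u_m}\bigl(\tfrac{k}{\log x},v_m\bigr)\Bigr|=\sum_{x_{u_m+1}<p\leqslant x_{u_m}}\operatorname{Re}\!\bigl(a_f f(p)\,p^{-1/2-iv_m}\bigr)+D_m,
\]
where $D_m$ is a \emph{deterministic} quantity, equal to $0$ in the Steinhaus case and to $-\tfrac12\sum_{x_{u_m+1}<p\leqslant x_{u_m}}p^{-1}\cos(2v_m\log p)$ in the Rademacher case (the latter because $f(p)^2=1$).

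Next I would analyse the random walk $j\mapsto\sum_{m\leqslant j}\log|I_{u_m}(k/\log x,v_m)|$ under $\widetilde{\mathbb{P}}_t$. Since the tilting density is a product over primes, the $f(p)$ remain independent under $\widetilde{\mathbb{P}}_t$, hence so do the increments above; a computation with Lemma~\ref{lemmarachid2}, Mertens' theorem and the prime number theorem then yields their conditional means and variances. The discretisation $v_m=w(u_m)$ from \eqref{def_v_m} is precisely what makes $\cos((t-v_m)\log p)=1+O(1/\log_2 x_{u_m})$ for all $p\leqslant x_{u_m}$, so that the drift of the $m$-th step is one unit of $\log\log$ plus a controllable error, while the variances come out as $\sum_{x_{u_m}^{1/{\rm e}}<p\leqslant x_{u_m}}\frac1{2p}$ in the Steinhaus case and $\sum_{x_{u_m}^{1/{\rm e}}<p\leqslant x_{u_m}}\frac{1+\cos(2v_m(t)\log p)}{2p}$ in the Rademacher case, each between two absolute constants. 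Summing the means and using the prime number theorem estimate $\sum_{z<p\leqslant w}\frac{\cos(2v\log p)}{p}\ll1$ for $z>{\rm e}^{1/|2v|}$ (which applies for every $m$ because the terms in \eqref{def_D(t)} and \eqref{deinition_of_c} are calibrated so that $x_{u_m}^{1/{\rm e}}>{\rm e}^{1/|2t|}$), one gets $\sum_{m\leqslant j}\widetilde{\mathbb{E}}_t[\log|I_{u_m}|]=j+O(1)$ within the barrier's tolerance, so that after centring $\mathcal{G}_q(k,t)$ becomes the statement that the centred walk lies in $[-2j-a(q,t)-2\log j+O(1),\,a(q,t)+2\log j+O(1)]$ for all $k\leqslant j\leqslant R(t)$. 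A strong Gaussian approximation (a Sakhanenko-/KMT-type coupling, as in \cite{Harper}) then replaces the centred walk by $\sum_{m\leqslant j}G_m$, with the stated independent Gaussians $G_m$, at the cost of an extra $O(1)$ in each barrier: dropping the lower barrier gives \eqref{upperbound_event_with_gaussian}, and keeping it but weakening it to the constant $-2k-O(1)$ --- legitimate since $-2k\geqslant-2j-a(q,t)-2\log j$ whenever $j\geqslant k$, so the new event is contained in $\mathcal{G}_q(k,t)$ --- gives \eqref{lower_bound_for_part_2}.

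Finally I would feed \eqref{upperbound_event_with_gaussian} and \eqref{lower_bound_for_part_2} into Lemma~\ref{gaussian_approximation} with $n=R(t)=\lfloor\log_2 x\rfloor-D(t)\asymp\log_2 x$ and $a=a(q,t)$ (for the lower bound also using its reflected form, together with the fact that discarding the constraints $j<k$ costs only a bounded factor since $k\leqslant K\ll\sqrt{\log_2 x}$), obtaining $\widetilde{\mathbb{P}}_t[\mathcal{G}_q(k,t)]\asymp\min\{1,a(q,t)/\sqrt{R(t)}\}$. Substituting $a(q,t)=\frac{C}{1-q}+5\theta\log_2 x+D(t)+\log(D(t)+1)$ and using $D(t)\ll\log_3 x$ on the range $1/\sqrt{\log_2 x}\leqslant|t|\leqslant\delta^5$ (so $\log(D(t)+1)$ is negligible against $\sqrt{\log_2 x}$) collapses this to $\frac{C}{(1-q)\sqrt{\log_2 x}}+\theta\sqrt{\log_2 x}+\frac{D(t)}{\sqrt{\log_2 x}}$, which is \eqref{equation_P_asymp}; the one-sided bound of Lemma~\ref{key3} follows a fortiori.

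The hard part will be the work under $\widetilde{\mathbb{P}}_t$ in the second step. The tilted increments are genuine sums over primes of terms of size $O(p^{-1/2})$, not Gaussians, so one needs a strong approximation coupling them to true Gaussians \emph{uniformly in $j$} with the coupling error, and the cumulative error in the means, both small enough to vanish into the $O(1)$ slack of the barriers --- which is exactly where Harper's multiplicative-chaos machinery is needed. The accompanying bookkeeping --- the boundary increments over the smallest primes and over primes of size near $x$ (outside the tilted range $p\leqslant x^{{\rm e}^{-1}}$), and checking that the corrections absorbed into $a_j(q,t)$ really stay within tolerance --- is where most of the care goes.
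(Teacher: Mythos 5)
Your outline reproduces the paper's overall architecture: tilt by $\widetilde{\mathbb{P}}_t$, exploit the independence of the prime blocks under the tilted measure, compute the drift of each increment using the discretisation $|t-v_m(t)|\ll 1/(\log x_{u_m}\log_2 x_{u_m})$ together with the prime number theorem bound for the oscillating sums (valid because $|t|\log x_{u_m}\gg {\rm e}^{B}$), and then feed the resulting Gaussian barrier event into Lemma \ref{gaussian_approximation} with $n=R\asymp\log_2 x$ and $a=a(q,t)$; the mean/variance bookkeeping and the containment argument behind the $-2k-O(1)$ lower barrier agree with the paper. The gap is in the one step you delegate to a black box: the passage from the tilted walk to the Gaussian walk. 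You invoke a Sakhanenko/KMT-type strong coupling claimed to cost only ``an extra $O(1)$ in each barrier'', but no such coupling yields a sup-error $O(1)$ uniformly over $j\leqslant R$ with sufficiently small exceptional probability: for independent, non-identically distributed increments the coupling error over $n$ steps is of order $\log n$ in general, or $O(1)$ only off an event of non-negligible probability. Here the quantity being estimated can itself be as small as $\asymp 1/\sqrt{\log_2 x}$ (e.g.\ $q=2/3$, $\theta$ small, $D(t)\ll 1$), so the exceptional event cannot be discarded, and an additive loss of order $\log R\asymp\log_3 x$ in the barrier constant would weaken \eqref{equation_P_asymp} by a factor $\log_3 x$; that loss propagates into the proof of Proposition \ref{prop_harper01}, where the bound $\ll C$ for the ``good'' part would become $\ll \log_3 x$ and the iteration no longer closes. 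So the comparison step, as you state it, is not merely deferred detail --- it is the content of the lemma, and the tool you name would not deliver it.

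What the paper does instead (following Harper) is prove Lemma \ref{Key_3_2}: a per-increment Berry--Esseen estimate, obtained by comparing the characteristic function of $\log\big|I_{u_m}\big(\frac{k}{\log x},v_m(t)\big)\big|$ under $\widetilde{\mathbb{P}}_t$ with that of a Gaussian $N_m$ of the stated mean and variance, with error $O\big(x_{u_m}^{-1/60}\big)$ on intervals of length $1/m^2$. The barrier event is then decomposed over a grid of spacing $1/m^2$ for the value of each increment; independence of the increments under $\widetilde{\mathbb{P}}_t$ allows Lemma \ref{Key_3_2} to be applied factor by factor, and the disjointness of the grid boxes allows the resulting probabilities to be resummed into a single Gaussian barrier probability, the total slack being $\pm 2+\sum_m m^{-2}=O(1)$; this is the two-sided sandwich \eqref{inequality_approx_sum_N} in the paper's proof, and it is precisely the device that preserves the $O(1)$ precision a strong-approximation coupling cannot. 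Once that sandwich is in place, the remaining steps of your proposal (centring via the drift $1+O(1/m^2)$ per step, dropping or weakening the lower barrier, applying Lemma \ref{gaussian_approximation}, and substituting $a(q,t)$ with $D(t)\ll\log_3 x$) match the paper.
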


\begin{proof}[ Assuming Lemma \ref{Key3_1} let's prove Lemma \ref{key3}.]
We easily verify in both cases that 
$$
\frac{1}{20} \leqslant  \sum_{x^{{\rm e}^{-(u_m+2)}} < p \leqslant  x^{{\rm e}^{-(u_m+1)}}} \frac{1+\cos(2v_m(t)\log p)}{2p} \leqslant  20
$$
and
$$
\frac{1}{20} \leqslant  \sum_{x^{{\rm e}^{-(u_m+2)}} < p \leqslant  x^{{\rm e}^{-(u_m+1)}}} \frac{1}{2p} \leqslant  20.
$$
By applying Lemmas \ref{Key3_1} and \ref{gaussian_approximation} for $n=R$ and since $R=R(t)\asymp \log_2 x$ for \\$\frac{1}{\sqrt{\log_2 x}} \leqslant |t| \leqslant \frac{1}{2}$, we get
$$
\begin{aligned}
\widetilde{\mathbb{P}}_t[\mathcal{G}_q(k,t)] \ll \min\bigg\{1,\frac{a(q,t)}{\sqrt{R}} \bigg\} & \asymp \min\bigg\{1,\frac{C}{(1-q)\sqrt{\log_2 x}} +\frac{D(t)}{\sqrt{\log_2 x}} \bigg\} \\ & \asymp  \frac{C}{(1-q)\sqrt{\log_2 x}} +\theta\sqrt{\log _2 x}+\frac{D(t)}{\sqrt{\log_2 x}}.
\end{aligned}
$$
For the lower bound, we have from \eqref{equation_P_asymp} and \cite[Probability Result]{Harper}
$$
\mathbb{P}\bigg[ -2j -O(1) \leqslant \sum_{m=1}^j G_m \leqslant \min\{ a(q,t),j\}+2 \log j+O(1) \text{ for all } k\leqslant j \leqslant  R  \bigg] \gg  \min\bigg\{1,\frac{a(q,t)}{\sqrt{R}} \bigg\}.
$$
This gives \eqref{equation_P_asymp}.
\end{proof}
\begin{remark}
    The lower bound \eqref{lower_bound_for_part_2} will be needed for Section 4.2.
\end{remark}
\noindent To prove the Lemma \ref{Key3_1}, we need the following lemma.
\begin{lemma}\label{Key_3_2}
Let $f$ be a Rademacher or Steinhaus multiplicative function. Recall $v_m(t)$ and $u_m(t)$ is defined in \eqref{def_v_m} and \eqref{def_u_m}. For \\$0 \leqslant~  l_m \leqslant~1/80 \sqrt{\log x_{u_m}} $ and $\frac{1}{\sqrt{\log_2 x}} \leqslant |t| \leqslant \delta^5$, we have
$$
\begin{aligned}
    \widetilde{\mathbb{P}}_t\bigg[ l_m\leqslant \log\bigg|I_{u_m}\bigg(&\frac{k}{\log x},v_m(t)\bigg) \bigg| \leqslant  l_m + \frac{1}{m^2}\bigg] \\& = \bigg(1+ O\bigg(\frac{1}{x_{u_m}^{1/60}}\bigg) \bigg)\mathbb{P}\bigg[ l_m \leqslant N_m \leqslant  l_m + \frac{1}{m^2} \bigg]
\end{aligned}
$$
where $(N_m)_{1 \leqslant  m \leqslant  R}$ are independent Gaussian random variables where $N_m$, \\in Rademacher case, has a mean $\sum_{x_{u_m}^{1/{\rm e}} < p \leqslant  x_{u_m}} \frac{c(t,v_m(t),p)}{p} $ where
$$
c(t,v_m(t),p):= 2\cos(t \log p)\cos (v_m(t) \log p )-\frac{1}{2} \cos (2 v_m(t)\log p)
$$
and variance $\sum_{x_{u_m}^{1/{\rm e}} < p \leqslant  x_{u_m}} \frac{1+\cos(2v_m(t)\log p)}{2p}$.\\
In Steinhaus case $N_m$ has mean $\sum_{x_{u_m}^{1/{\rm e}} < p \leqslant  x_{u_m}} \frac{\cos(v_m(t)\log p)}{p} $ and variance \\ $\sum_{x_{u_m}^{1/{\rm e}} < p \leqslant  x_{u_m}} \frac{1}{2p}$.
\end{lemma}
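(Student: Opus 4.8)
\emph{Strategy.} The plan is to establish Lemma~\ref{Key_3_2} as a quantitative local limit theorem, following the line of Harper's analysis in \cite{Harper} but adapted to the discretised point $v_m(t)$ and the real shift $k/\log x$. Write $Y_p:=a_f\log\bigl|1+a_f f(p)\,p^{-1/2-iv_m(t)-k/\log x}\bigr|$, so that
\[
Z_m:=\sum_{x_{u_m}^{1/{\rm e}}<p\leqslant x_{u_m}}Y_p=\log\bigl|I_{u_m}(k/\log x,\,v_m(t))\bigr|,
\]
and we must control the law of $Z_m$ under $\widetilde{\mathbb P}_t$ on the scale $1/m^2$. Since $u_m\geqslant1$, every prime of the block lies below $x^{{\rm e}^{-1}}$, so the Radon--Nikodym density defining $\widetilde{\mathbb P}_t$ factorises over these primes; hence $\{f(p)\}$ in the block stays mutually independent under $\widetilde{\mathbb P}_t$, each $f(p)$ having the explicit weighted law of density proportional to $|1+a_f f(p)/p^{1/2+it}|^{2a_f}$, and $|Y_p|\ll p^{-1/2}$.

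\emph{Matching the first two moments.} I would first compute $\mathbb E_{\widetilde{\mathbb P}_t}[f(p)^n]$ from the one--prime law by a geometric series identity and feed it into $a_f\log|1+a_f w|=a_f\,\mathrm{Re}\sum_{n\geqslant1}\tfrac{(-a_f)^{n-1}}{n}w^n$ with $w=f(p)p^{-1/2-iv_m(t)-k/\log x}$: only $n=1$ — and, in the Rademacher case where $f(p)^2\equiv1$, also $n=2$ — contributes at order $1/p$, the rest being $O(p^{-3/2})$, while $p^{-k/\log x}=1+O(k\log x_{u_m}/\log x)$ costs only $o(1)\cdot p^{-1}$ per prime. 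Summing over the block (which has $\log$--length one, so Mertens' theorem forces $\mathrm{Var}_{\widetilde{\mathbb P}_t}(Z_m)\asymp1$) yields $\mathbb E_{\widetilde{\mathbb P}_t}[Z_m]=\mathbb E[N_m]+O(x_{u_m}^{-c})$ and $\mathrm{Var}_{\widetilde{\mathbb P}_t}(Z_m)=\mathrm{Var}(N_m)+O(x_{u_m}^{-c})$ for a fixed $c>0$, with $N_m$ the Gaussian of the statement (a routine if tedious trigonometric computation). Next, the characteristic function factorises, $\widetilde\varphi_m(\xi):=\mathbb E_{\widetilde{\mathbb P}_t}[{\rm e}^{i\xi Z_m}]=\prod_p\mathbb E_{\widetilde{\mathbb P}_t}[{\rm e}^{i\xi Y_p}]$, and since $|Y_p|\ll p^{-1/2}$ one has, per prime, $\log\mathbb E_{\widetilde{\mathbb P}_t}[{\rm e}^{i\xi Y_p}]=i\xi\,\mathbb E_{\widetilde{\mathbb P}_t}[Y_p]-\tfrac{\xi^2}{2}\mathrm{Var}_{\widetilde{\mathbb P}_t}(Y_p)+O(|\xi|^3p^{-3/2})$; summation over the block gives $\widetilde\varphi_m(\xi)=\widehat{N_m}(\xi)\exp\bigl(O(|\xi|^3x_{u_m}^{-c})\bigr)$ uniformly for $|\xi|\leqslant x_{u_m}^{c/3}$.

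\emph{Passing to the local estimate, and the main obstacle.} One cannot use densities directly, since in the Rademacher case the law of $Z_m$ is purely atomic; instead I would sandwich $\mathbb{1}_{[l_m,l_m+1/m^2]}$ between smooth bumps differing from it only on two intervals of length $\eta$, with $\eta$ taken in the window $x_{u_m}^{-c}\ll\eta\ll m^{-2}x_{u_m}^{-1/60}$ and with Fourier transforms decaying faster than any power beyond $|\xi|\sim\eta^{-1}$ (still inside $|\xi|\leqslant x_{u_m}^{c/3}$). Inserting these into Fourier inversion, the band $|\xi|\leqslant\eta^{-1}$ produces, for both $Z_m$ and $N_m$, the same main term up to $O\bigl(x_{u_m}^{-c}\int|\xi|^3{\rm e}^{-c'\xi^2}\,{\rm d}\xi\bigr)=O(x_{u_m}^{-c})$, the complementary range contributes $\ll x_{u_m}^{-10}$ (there the bump transform is already negligible and $|\widetilde\varphi_m|,|\widehat{N_m}|\leqslant1$), and replacing each bump by the indicator costs $O(\eta\,\sup_y p_{N_m}(y))=O(\eta)$. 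Hence $\widetilde{\mathbb P}_t[Z_m\in[l_m,l_m+1/m^2]]$ and $\mathbb P[N_m\in[l_m,l_m+1/m^2]]$ agree up to an additive $O(\eta+x_{u_m}^{-c})$; as $\mathrm{Var}(N_m)\asymp1$ and $|l_m|\leqslant(80\sqrt{\log x_{u_m}})^{-1}$, the latter probability is $\asymp m^{-2}$, which exceeds the additive error by a factor $\gg x_{u_m}^{1/60}$, and this turns the discrepancy into the multiplicative $1+O(x_{u_m}^{-1/60})$. The real work is exactly this last step: $\eta$ must be pushed far below $1/m^2$ yet kept comfortably above the finest ``sub--block resolution'' $\asymp x_{u_m}^{-1/2}$ (the size of the increment coming from the largest prime of the block), so that every Fourier mode that matters lies in the zone where the single--prime characteristic functions are genuinely Gaussian and one never needs the large--$\xi$ behaviour of $\widetilde\varphi_m$; the double--exponential growth of $x_{u_m}$ in $D(t)+m$ is precisely what leaves room for this and for absorbing all per--prime errors (series truncation, $p^{-k/\log x}\neq1$, the discretisation $v_m(t)$ versus $t$) into the stated power saving.
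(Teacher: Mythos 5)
Your route is, in substance, the paper's own: the paper proves this lemma by comparing the characteristic function of $\log\big|I_{u_m}(k/\log x,v_m(t))\big|$ under $\widetilde{\mathbb{P}}_t$ with the Gaussian characteristic function (quoting Harper's Lemmas 1 and 2 for exactly the per-prime estimate you re-derive by Taylor expansion of the tilted one-prime laws), and then passes from characteristic functions to the interval probability via the Berry--Esseen/Esseen smoothing inequality, which is what your mollified-indicator Fourier inversion does by hand; your remark that one must smooth because the Rademacher law is atomic is precisely why the paper invokes Berry--Esseen. So the decomposition and both key ingredients agree; the differences are only in sourcing.

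Two concrete defects in your write-up, both at the points where you go beyond (or around) what the paper writes. First, you misread the hypothesis on $l_m$: it is $0\leqslant l_m\leqslant \tfrac{1}{80}\sqrt{\log x_{u_m}}$, not $|l_m|\leqslant (80\sqrt{\log x_{u_m}})^{-1}$. Consequently $\mathbb{P}\big[l_m\leqslant N_m\leqslant l_m+1/m^2\big]$ is \emph{not} $\asymp m^{-2}$; at the top of the range, with variance only guaranteed to be $\geqslant 1/20$, it can be as small as about $m^{-2}x_{u_m}^{-1/640}$, so your conversion of the additive error into the multiplicative factor $1+O(x_{u_m}^{-1/60})$ is unjustified as stated. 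It can be repaired (Gaussian density at $l_m$ is $\gg x_{u_m}^{-1/640}$, and your characteristic-function error exponent $c\approx 1/(6e)$ beats $1/60+1/640$), but this exponent comparison is exactly the step you claimed was immediate; note the paper's proof itself stops at the additive bound $\ll x_{u_m}^{-1/60}$. Second, the ``routine trigonometric computation'' is asserted rather than carried out, and it is the only delicate bookkeeping here: done honestly, the Steinhaus tilted mean per prime is $\cos\big((t-v_m(t))\log p\big)\,p^{-1-k/\log x}$, i.e.\ the cross term between the tilt point $t$ and the evaluation point $v_m(t)$ (this, being $\approx \sum_p 1/p$ per block, is what the paper actually uses downstream when it subtracts $\sum_m\sum_p 1/p$), so you will not literally recover the printed mean $\sum_p\cos(v_m(t)\log p)/p$; and your claim that $p^{-k/\log x}=1+O(k\log x_{u_m}/\log x)$ ``costs only $o(1)\cdot p^{-1}$ per prime'' fails in the stated uniformity, since for the last blocks ($u_m=O(1)$) and $k\asymp\log_3 x$ the resulting shift in the block mean and variance is of size $\asymp k e^{-u_m}$, far larger than any $O(x_{u_m}^{-c})$. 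The paper finesses both parameter questions by citing Harper's Lemmas 1 and 2 verbatim; if you recompute them yourself, as you propose, you must get these parameters right rather than declare that they match the statement.
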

\begin{proof} For Rademacher case, the characteristic function of Gaussain is given by
$$
\mathbb{E}[{\rm e}^{iuN_m}]= \exp \bigg\{ \sum_{x_{u_m}^{1/{\rm e}} < p \leqslant  x_{u_m}} \frac{iuc(t,v_m(t),p) + (u^2/4)(1+\cos(2v_m(t)\log p)) }{p} \bigg\}.
$$
For Steinhaus case, the characteristic function of Gaussain is given by
$$
\mathbb{E}[{\rm e}^{iuN_m}]= \exp \bigg\{ \sum_{x_{u_m}^{1/{\rm e}} < p \leqslant  x_{u_m}} \frac{iu\cos(v_m(t) \log p) + u^2/4 }{p} \bigg\}.
$$
By \cite[lemma 1]{Harper} and \cite[lemma 2]{Harper}, for $|u| \leqslant  x_{u_m}^{1/20}$, we get easily in both cases
$$
\begin{aligned}
    & \bigg|\widetilde{\mathbb{E}}_t \bigg[{\rm e}^{iu\log|I_{u_m}(\frac{k}{\log x},v_m(t))|}\bigg] -\mathbb{E}\bigg[{\rm e}^{iuN_m}\bigg]\bigg|
    \\ & \ll \exp \bigg\{ -\frac{u^2}{4} \!\!\!\!\!  \sum_{x_{u_m}^{1/{\rm e}} < p \leqslant  x_{u_m}}\!\!\!\!\! \frac{1}{p}+\frac{(1+a_f)\cos(2v_m(t))}{2p} \bigg\}\frac{|u|+|u|^3}{\sqrt{x_{u_m}^{1/{\rm e}}}\log x_{u_m}}.
\end{aligned}
$$
By Berry-Esseen theorem (see e.g. \cite[Theorem 7.6.1]{Gut}), we have
$$
\begin{aligned}
&\bigg|\widetilde{\mathbb{P}}\left[ l_m \leqslant \log \left|I_{u_m}\left(\frac{k}{\log x}, v_m(t)\right)\right| \leqslant  l_m +\frac{1}{m^2} \right]- \mathbb{P}\left[ l_m \leqslant N_m \leqslant l_m + \frac{1}{m^2}\right] \bigg|
\\ \ll & \int_{-x_{u_m}^{1 / 20}}^{x_{u_m}^{1 / 20}}\left|\frac{\widetilde{\mathbb{E}} \big[ {\rm e}^{i u \log \left|I_{u_m}\left(\frac{k}{\log x}, v_m(t)\right)\right|} \big]-\mathbb{E} \big[{\rm e}^{i u N_m}\big]}{u}\right| {\rm d} u+\frac{1}{x_{u_m}^{1 / 20}} \\
\ll & \int_{-x_{u_m}^{1 / 20}}^{x_{u_m}^{1 / 20}} \frac{1+u^2}{\sqrt{x_{u_m}^{1/{\rm e}}}\log x_{u_m}} {\rm d} u+\frac{1}{x_{u_m}^{1 / 20}} \ll \frac{1}{x_{u_m}^{1 / 60}} .
\end{aligned}
$$
This ends the proof.
\end{proof}
\begin{proof}[Proof of Lemma \ref{Key3_1}.]
First we prove the following inequality: Let $(\Tilde{v}_j)_{1\leqslant j\leqslant R}$ and $(\Tilde{u}_j)_{1\leqslant j\leqslant R}$ be a sequences verifying
$ - 1/80 \sqrt{\log x_j} \leqslant  \Tilde{u}_j \leqslant \Tilde{v}_j \leqslant  1/80 \sqrt{\log x_j}$ for all $1\leqslant j\leqslant R$. We have uniformly for $1/\sqrt{\log_2 x} \leqslant  |t|\leqslant  \delta^5$ and $ 0 \leqslant k \leqslant K$
\begin{equation}
    \begin{aligned}\label{inequality_approx_sum_N}
& \mathbb{P}\bigg[ \Tilde{u}_j +2 \leqslant \sum_{m=1}^j  N_m \leqslant  \Tilde{v}_j - 2 \text{ for all } k \leqslant  j \leqslant  R \bigg]  
\\& \ll \widetilde{\mathbb{P}}_t\bigg[ \Tilde{u}_j \leqslant  \sum_{m=1}^j  \log\bigg|I_{u_m}\bigg(\frac{k}{\log x},v_m(t)\bigg) \bigg| \leqslant  \Tilde{v}_j  \text{ for all } k \leqslant  j \leqslant  R \bigg]  \\ & \ll  \mathbb{P}\bigg[ \Tilde{u}_j -2 \leqslant \sum_{m=1}^j  N_m \leqslant  \Tilde{v}_j + 2 \text{ for all } k \leqslant  j \leqslant  R \bigg]
\end{aligned}
\end{equation}
where $(N_m)_{1 \leqslant  m \leqslant  R}$ are independent Gaussian random variables where $N_m$, in Rademacher case, has a mean\\ $\sum_{x_{u_m}^{1/{\rm e}} < p \leqslant  x_{u_m}} \frac{c(t,v_m(t),p)}{p} $ where
$$
c(t,v_m(t),p):= 2\cos(t \log p)\cos (v_m(t) \log p )-\frac{1}{2} \cos (2 v_m(t)\log p)
$$
and variance $\sum_{x_{u_m}^{1/{\rm e}} < p \leqslant  x_{u_m}} \frac{1+\cos(2v_m(t)\log p)}{2p}$.\\
In Steinhaus case $N_m$ has mean $\sum_{x_{u_m}^{1/{\rm e}} < p \leqslant  x_{u_m}} \frac{\cos(v_m(t)\log p)}{p} $ and variance $\sum_{x_{u_m}^{1/{\rm e}} < p \leqslant  x_{u_m}} \frac{1}{2p}$.\\
We define $\mathcal{R}_m:= \big\{ r \in 1/m^2 \mathbb{Z}\! : \!  r \leqslant  1/40 \sqrt{ \log x_m} +2  \big\}$, then in order to have \\$ \Tilde{u}_j \leqslant~ \sum_{m=1}^j  \log\big|I_{u_m}\big(\frac{k}{\log x},v_m(t)\big) \big| \leqslant  \Tilde{v}_j \text{ for all } k \leqslant  j \leqslant  R$, we must have
$$
r_m\leqslant  \log\bigg|I_{u_m}\bigg(\frac{k}{\log x},v_m(t)\bigg) \bigg| \leqslant  r_m +1/m^2\text{ for all } 1 \leqslant  m \leqslant  R
$$
where $r_m \in \mathcal{R}_m$ for all $1 \leqslant  m \leqslant  R$, with $\Tilde{u}_j - \sum_{m=1}^j \frac{1}{m^2} \leqslant \sum_{m=1}^j r_m \leqslant  \Tilde{v}_j  $ for all $k \leqslant  j \leqslant  R$. Thus, we have, in both cases, by Lemma \ref{Key_3_2}
$$
\begin{aligned}
& \widetilde{\mathbb{P}}_t\bigg[\Tilde{u}_j \leqslant \sum_{m=1}^j  \log\bigg|I_{u_m}\bigg(\frac{k}{\log x},v_m(t)\bigg) \bigg| \leqslant  \Tilde{v}_j \text{ for all } k \leqslant  j \leqslant  R \bigg]
\\ & \leqslant  \widetilde{\mathbb{P}}_t\bigg[  \!\! \!\!\!\!  \bigcup_{\substack{ r_1 \in \mathcal{R}_1 ... r_R \in \mathcal{R}_R \\ \sum_{m=1}^j r_m \leqslant  \Tilde{v}_j \forall k \leqslant  j \leqslant  R }}  \! \!\!\! \!\! \!\!\!\! \bigg\{r_m \leqslant  \log\bigg|I_{u_m}\bigg(\frac{k}{\log x},v_m(t)\bigg) \bigg| \leqslant  r_m + 1/m^2 \text{ for all } 1 \leqslant  m \leqslant  R \bigg\}\bigg]
\\ & \leqslant  \! \!\!\! \!\! \!\!\!  \sum_{\substack{ r_1 \in \mathcal{R}_1 ... r_R \in \mathcal{R}_R \\ \sum_{m=1}^j r_m \leqslant  \Tilde{v}_j \forall k \leqslant  j \leqslant  R }}  \widetilde{\mathbb{P}}_t\bigg[   r_m \leqslant  \log\bigg|I_{u_m}\bigg(\frac{k}{\log x},v_m(t)\bigg) \bigg| \leqslant  r_m + 1/m^2 \text{ for all } 1 \leqslant  m \leqslant  R \bigg]
\\ & \ll   \sum_{\substack{ r_1 \in \mathcal{R}_1 ... r_R \in \mathcal{R}_R \\ \sum_{m=1}^j r_m \leqslant  \Tilde{v}_j \forall k \leqslant  j \leqslant  R }}  \mathbb{P}\bigg[  r_m \leqslant  N_m \leqslant  r_m + 1/m^2 \text{ for all } 1 \leqslant  m \leqslant  R\bigg]. 
\end{aligned}
$$
Since for each $(r_1  ... r_R) \neq (r'_1  ... r'_R)$ from $ \mathcal{R}_1 \times ... \times \mathcal{R}_R $ , we have the probability of the intersection of the events $ r_m \leqslant  N_m  \leqslant  r_m + 1/m^2 \text{ for all } 1 \leqslant  m \leqslant  R$ and $r'_m \leqslant  N_m \leqslant  r'_m + 1/m^2 \text{ for all } 1 \leqslant  m \leqslant  R$ is equal to $0$, we get then
$$
\begin{aligned}
&\!\!\!\!\!\!\!\!\!\!\!\!\!\!\!\!\!\!\!\!\sum_{\substack{ r_1 \in \mathcal{R}_1 ... r_R \in \mathcal{R}_R \\ \sum_{m=1}^j r_m \leqslant  \Tilde{v}_j \forall k \leqslant  j \leqslant  R }}  \mathbb{P}\bigg[  r_m \leqslant  N_m \leqslant  r_m + 1/m^2 \text{ for all } 1 \leqslant  m \leqslant  R\bigg]
\\ &\!\!\!\!\!\!\!\!\!\!\!\!\!\!\!\!\!\!\!\! =  \mathbb{P}\bigg[ \bigcup_{\substack{ r_1 \in \mathcal{R}_1 ... r_R \in \mathcal{R}_R \\ \sum_{m=1}^j r_m \leqslant  \Tilde{v}_j \forall k \leqslant  j \leqslant  R }} \bigg\{ r_m \leqslant  N_m \leqslant  r_m + 1/m^2 \text{ for all } 1 \leqslant  m \leqslant R \bigg\}\bigg]
\\ &\!\!\!\!\!\!\!\!\!\!\!\!\!\!\!\!\!\!\!\! \leqslant\mathbb{P}\bigg[ \Tilde{u}_j -2 \leqslant  \sum_{m=1}^j N_m \leqslant  \Tilde{v}_j + 2 \text{ for all } k \leqslant  j \leqslant  R \bigg].
\end{aligned}
$$
Recall that we have $|t-v_m(t)|\ll \frac{1}{\log x_{u_m} \log_2 x_{u_m}} $. In the Rademacher case, we have 
$$
\begin{aligned}
 2\cos\big(t \log p\big)\cos \big(v_m(t) \log p  \big) & =  \cos\big((t+v_m(t)) \log p\big)+\cos\big((t-v_m(t)) \log p\big)
 \\&= \cos\big((t+v_m(t)) \log p\big) + 1 + O(1/m^2).
\end{aligned}
$$
Since $|t|\log x_{R} \gg {\rm e}^{B} $ (recall that $R$ is defined in \eqref{def_of_S_R}), we get from Prime Number Theorem
$$
\begin{aligned}
\sum_{x_{u_m}^{1 / {\rm e}}<p \leqslant x_{u_m}} \frac{\cos \left(\left(t+v_m(t)\right) \log p\right)}{p} \,\, ,  \sum_{x_{u_m}^{1 / e}<p \leqslant x_{u_m}} \frac{\cos \left(2 v_m(t) \log p\right)}{p} & \ll \frac{1}{|t| \log x_{u_m}}
\\ & \ll \frac{1}{{\rm e}^m|t| \log x_{R}} 
\\ & \ll \frac{1}{B {\rm e}^m}.
\end{aligned}
$$
Since 
$$
\sum_{x_{u_m}^{1/{\rm e}} < p \leqslant  x_{u_m}} \frac{1}{p} = 1 + O\bigg(\frac{1}{{\rm e}^{b_0 {\rm e} ^{m/2}}}\bigg)
$$
where $b_0$ is an absolute constant, we finally get
$$
\sum_{x_{u_m}^{1/{\rm e}} < p \leqslant  x_{u_m}} \frac{c(t,v_m(t),p)}{p} = 1 + O\bigg(\frac{1}{m^2}\bigg).
$$
By taking 
$$
\begin{aligned}
\Tilde{v}_k & = a_k(q,t) 
\end{aligned}
$$
in \eqref{inequality_approx_sum_N}, 
and $$G_m=N_m - \sum_{x_{u_m}^{1/{\rm e}} < p \leqslant  x_{u_m}} \frac{c(t,v_m(t),p)}{p}, $$
we get 
$$
\begin{aligned}
& \widetilde{\mathbb{P}}_t\bigg[ \sum_{m=1}^j  \log\bigg|I_{u_m}\bigg(\frac{k}{\log x},v_m(t)\bigg) \bigg| \leqslant   a_j(q,t)  \text{ for all } k \leqslant  j \leqslant  R \bigg] 
\\ & \ll  \mathbb{P}\bigg[ \sum_{m=1}^j  G_m \leqslant   j - \sum_{m=1}^j \sum_{x_{u_m}^{1/{\rm e}} < p \leqslant  x_{u_m}} \frac{c(t,v_m(t),p)}{p} + a(q,t) +2 \log j \text{ for all } k \leqslant  j \leqslant  R \bigg]
\\ & \ll \mathbb{P}\bigg[ \sum_{m=1}^j  G_m \leqslant   a(q,t) +2 \log j + O(1) \text{ for all } k \leqslant  j \leqslant  R \bigg].
\end{aligned}
$$
The proof of the corresponding lower bound of \eqref{inequality_approx_sum_N} is exactly similar. By following the same steps, we have
$$
\begin{aligned}
    & \widetilde{\mathbb{P}}_t\bigg[  -a_j(q,t) \leqslant \sum_{m=1}^j  \log\big|I_{u_m}\big(\frac{k}{\log x},v_m(t)\big) \big| \leqslant   a_j(q,t)  \text{ for all } k \leqslant  j \leqslant  R \bigg] 
    \\ & \gg \mathbb{P}\bigg[ -j - \sum_{m=1}^j \sum_{x_{u_m}^{1/{\rm e}} < p \leqslant  x_{u_m}} \frac{c(t,v_m(t),p)}{p}  \leqslant \sum_{m=1}^j G_m \leqslant  a(q,t)+2 \log j \text{ for all } k\leqslant j \leqslant  R  \bigg].
    \\ & \gg \mathbb{P}\bigg[ -2j +O(1) \leqslant \sum_{m=1}^j G_m \leqslant  a(q,t)+2 \log j \text{ for all } k\leqslant j \leqslant  R  \bigg]
\end{aligned}
$$
For Steinhaus case, we have 
$$
\begin{aligned}
& \widetilde{\mathbb{P}}_t\bigg[ \sum_{m=1}^j  \log\big|I_{u_m}\big(\frac{k}{\log x},v_m(t)\big) \big| \leqslant  a_j(q,t) \text{ for all } k \leqslant  j \leqslant  R \bigg] 
\\ & \ll  \mathbb{P}\bigg[ \sum_{m=1}^j  G_m \leqslant    - \sum_{m=1}^j \sum_{x_{u_m}^{1/{\rm e}} < p \leqslant  x_{u_m}} \frac{1}{p} + a_j(q,t)  \text{ for all } k \leqslant  j \leqslant  R \bigg]
\\ & \ll \mathbb{P}\bigg[ \sum_{m=1}^j  G_m \leqslant   a(q,t) +2 \log j + O(1) \text{ for all } k \leqslant j \leqslant  R \bigg].
\end{aligned}
$$
For the lower bound in the Steinhaus case
$$
\begin{aligned}
    &\widetilde{\mathbb{P}}_t[  -a_j(q,t) \leqslant \sum_{m=1}^j  \log\big|I_{u_m}\big(\frac{k}{\log x},v_m(t)\big) \big| \leqslant   a_j(q,t)  \text{ for all } k \leqslant  j \leqslant  R ] 
        \\ & \gg \mathbb{P}\bigg[ -j -  \sum_{m=1}^j \sum_{x_{u_m}^{1/{\rm e}} < p \leqslant  x_{u_m}} \frac{1}{p}  \leqslant \sum_{m=1}^j G_m \leqslant  a(q,t)+2 \log j \text{ for all } k\leqslant j \leqslant  R  \bigg]
    \\ & \gg \mathbb{P}\bigg[ -2j +O(1) \leqslant \sum_{m=1}^j G_m \leqslant  a(q,t)+2 \log j \text{ for all } k\leqslant j \leqslant  R  \bigg].
\end{aligned}
$$
This ends the proof.
\end{proof}

\subsection{The proof of the lower bound.}
The approach used to prove Theorem \ref{theoreme_principal_2} is once again derived from \cite{Harper}. The initial phase of our argument involves a reduction step aimed at narrowing down the initial expression to comprehending specific averages of random Euler products, much like the methodology employed in the upper bound demonstration.\\
In the following, we will use the notations $M,\, M^{+},\, M^{-}$ defined in \eqref{def_M} and \eqref{def_M+_M-}. Let first start when $y \ll \frac{x}{\log x}$. We have the following Lemma
\begin{lemma}\label{ Xu_sound_lemma}
    Let $f$ be a Steinhaus or Rademacher multiplicative function. Let $x$ and $y$ large, with $y\ll \frac{x}{\log x}$. We have uniformly for $0 \leqslant q \leqslant 1$
    $$ \mathbb{E}\bigg[\big|M(x,y;f) \big|^{2q}\bigg] \asymp y^q.$$
\end{lemma}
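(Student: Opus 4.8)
The plan is to prove the two inequalities separately; the upper bound comes for free from concavity, and the lower bound is obtained by interpolating the $2q$-th moment against the second and fourth moments via H\"older.

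For the upper bound, since $0\leqslant q\leqslant 1$ the map $u\mapsto u^{q}$ is concave on $[0,\infty)$, so Jensen's inequality gives $\mathbb{E}[|M(x,y;f)|^{2q}]=\mathbb{E}[(|M(x,y;f)|^{2})^{q}]\leqslant A(x,y)^{q}$, where $A(x,y)=\mathbb{E}[|M(x,y;f)|^{2}]$ equals $\lfloor x+y\rfloor-\lfloor x\rfloor$ in the Steinhaus case and the number of squarefree integers in $(x,x+y]$ in the Rademacher case; in both situations $A(x,y)\asymp y$ in the range of $y$ under consideration (in the Rademacher case by a standard estimate for squarefree integers in short intervals). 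Hence $\mathbb{E}[|M(x,y;f)|^{2q}]\ll y^{q}$. For the lower bound the single extra input I would establish is the fourth-moment bound
\begin{equation}\label{sketch_fourth}
\mathbb{E}\big[|M(x,y;f)|^{4}\big]\ll y^{2},
\end{equation}
after which H\"older interpolation finishes: for $0\leqslant q<1$ set $\lambda:=1/(2-q)\in[\tfrac12,1)$, so that $2q\lambda+4(1-\lambda)=2$; then H\"older with exponents $1/\lambda$ and $1/(1-\lambda)$ gives $A(x,y)=\mathbb{E}[|M|^{2q\lambda}|M|^{4(1-\lambda)}]\leqslant(\mathbb{E}[|M|^{2q}])^{\lambda}(\mathbb{E}[|M|^{4}])^{1-\lambda}\ll(\mathbb{E}[|M|^{2q}])^{\lambda}y^{2(1-\lambda)}$, and since $A(x,y)\gg y$ this rearranges to $\mathbb{E}[|M|^{2q}]\gg y^{(2\lambda-1)/\lambda}=y^{2-1/\lambda}=y^{q}$, with implied constants independent of $q$ (the case $q=1$ being just the second-moment identity).

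So everything reduces to \eqref{sketch_fourth}, which I would get by expanding the fourth moment. In the Steinhaus case $\mathbb{E}[f(n_{1})f(n_{2})\overline{f(n_{3})f(n_{4})}]=\mathbb{1}_{n_{1}n_{2}=n_{3}n_{4}}$, so $\mathbb{E}[|M|^{4}]$ counts quadruples $(n_{1},n_{2},n_{3},n_{4})\in(x,x+y]^{4}$ with $n_{1}n_{2}=n_{3}n_{4}$; these are parametrised bijectively by $(a,\alpha,\beta,b)$ with $\gcd(\alpha,\beta)=1$ via $(n_{1},n_{2},n_{3},n_{4})=(a\alpha,b\beta,a\beta,b\alpha)$, taking $a=\gcd(n_{1},n_{3})$. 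The diagonal $\alpha=\beta$ forces $\alpha=\beta=1$, i.e. $n_{1}=n_{3}$, $n_{2}=n_{4}$, and contributes $\asymp y^{2}$; for the off-diagonal, writing $h=|\alpha-\beta|\geqslant 1$, the four membership constraints force $ah\leqslant y$, at most $\ll y/a+1$ admissible $\alpha$ for each $a$, and then (since necessarily $\beta\asymp\alpha\asymp x/a$) at most $\ll ya/x+1$ admissible $b$, so the off-diagonal count is $\ll\sum_{h\geqslant 1}\sum_{a\leqslant y/h}(y/a+1)(ya/x+1)$; summing over $h\leqslant y$, the dominant piece is $\sum_{h\leqslant y}(y/h)(y^{2}/x)\ll y^{3}(\log y)/x$, which is $\ll y^{2}$ exactly because $y\ll x/\log x$, while the remaining pieces are $\ll y^{2}$ using only $y\leqslant x$. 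In the Rademacher case $\mathbb{E}[|M|^{4}]$ counts quadruples of squarefree integers in $(x,x+y]$ with $n_{1}n_{2}n_{3}n_{4}=\square$; the same dissection (the diagonal $\{n_{1}=n_{2},\,n_{3}=n_{4}\}$ with its two relabellings giving $\asymp y^{2}$, plus an off-diagonal again controlled by $y\ll x/\log x$) yields \eqref{sketch_fourth} — this is the fourth-moment computation already carried out by Chatterjee and Soundararajan \cite{sound_chat}.

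The main obstacle is precisely this off-diagonal count: one must organise the solutions of $n_{1}n_{2}=n_{3}n_{4}$ (resp. of $n_{1}n_{2}n_{3}n_{4}=\square$ with all variables squarefree) in a short interval efficiently enough that the total stays $O(y^{2})$, and the hypothesis $y\ll x/\log x$ enters exactly there and is essentially sharp for this argument. As an alternative for the whole statement one could instead invoke the central limit theorem of Soundararajan and Xu \cite{Sound_Xu}, whose admissible range $y\leqslant x/(\log x)^{\alpha+\varepsilon}$ contains $y\ll x/\log x$ and which gives $M(x,y;f)/\sqrt{A(x,y)}$ converging to a Gaussian together with convergence of moments, hence $\mathbb{E}[|M|^{2q}]\asymp A(x,y)^{q}\asymp y^{q}$; but extracting the stated uniformity in $q$ from that route takes slightly more care than the H\"older argument above, so I would prefer the self-contained fourth-moment approach.
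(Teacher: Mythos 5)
Your proposal is correct, but it takes a genuinely different route from the paper. The paper disposes of this lemma in one line by citing Soundararajan--Xu: in the range $y\ll x/\log x$ their central limit theorem together with the accompanying moment estimates gives $M(x,y;f)/\sqrt{\mathbb{E}[|M(x,y;f)|^2]}$ a Gaussian limit with convergent moments, whence $\mathbb{E}[|M|^{2q}]\asymp \mathbb{E}[|M|^2]^{q}\asymp y^q$ --- that is, the paper takes exactly the alternative you mention at the end and set aside. Your preferred route (Jensen for the upper bound, then H\"{o}lder interpolation of the second moment between the $2q$-th and fourth moments, reducing everything to $\mathbb{E}[|M|^4]\ll y^2$) is sound: the exponent bookkeeping $\lambda=1/(2-q)$ is right and gives constants uniform in $q$ since $\lambda\in[\tfrac12,1)$, and your quadruple count for $n_1n_2=n_3n_4$ in the short interval does give a diagonal $\asymp y^2$ plus an off-diagonal $\ll y^2+y^3\log y/x\ll y^2$ precisely under $y\ll x/\log x$; in the Rademacher case the analogous fourth-moment bound is indeed in Chatterjee--Soundararajan, whose CLT is itself proved by moments. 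What your approach buys is self-containedness and transparent uniformity in $q$, with no need to extract moment convergence from a distributional statement; what the paper's citation buys is brevity and a wider admissible range ($y\leqslant x/(\log x)^{2\log 2-1+\varepsilon}$), which is not needed here. One caveat, common to both proofs rather than a defect of yours: in the Rademacher case the claim $\mathbb{E}[|M|^2]\asymp y$ already presupposes that the number of squarefree integers in $(x,x+y]$ is $\asymp y$, so ``$y$ large'' must be read as $y$ not too small relative to $x$ (trivially $y\gg\sqrt{x}$, or $y\gg x^{1/5}\log x$ via Filaseta--Trifonov); this limitation is inherent in the lemma as stated.
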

\begin{proof}
    Is a direct result from the fact that for $y \ll \frac{x}{\log x}$ the quantity $\frac{M(x,y;f)}{\mathbb{E}\big[\big|M(x,y;f) \big|^{2}\big]} $ is distributed like a standard normal random variable with mean $0$ and variance $1$ (See for instance \cite[Corollary 1.2 and Theorem 9.1]{Sound_Xu}).
\end{proof}
\noindent Now, we can reduce the problem on studying $M(x,y;f)$ when $y \geqslant \frac{x}{\log x}$.
Let $\epsilon$ be an  auxiliary Rademacher random variable, we have 
$$
\begin{aligned}
    \mathbb{E}\bigg[\big| M^{-}(x,y,\sqrt{2x};f) \big|^{2q}\bigg] & = \frac{1}{2^{2q}} \mathbb{E}\bigg[\big| M^{-}\big(x,y,\sqrt{2x};f\big) + M^{+}\big(x,y,\sqrt{2x};f\big)\\ & \,\,\,\,\,\,\,\,\,+ M^{-}\big(x,y,\sqrt{2x};f\big) - M^{+}\big(x,y,\sqrt{2x};f\big) \big|^{2q}\bigg]
    \\ & \leqslant \mathbb{E}\bigg[\big| M^{-}\big(x,y,\sqrt{2x};f\big) + M^{+}\big(x,y,\sqrt{2x};f\big) \big|^{2q}\bigg] \\ & \,\,\,\,\,\,\,\,\,+ \mathbb{E}\bigg[\big| M^{-}\big(x,y,\sqrt{2x};f\big) - M^{+}\big(x,y,\sqrt{2x};f\big) \big|^{2q}\bigg]
    \\ & = 2 \mathbb{E}\bigg[\big|M(x,y;f) \big|^{2q}\bigg].
\end{aligned}
$$
Here, we used the fact that the law of $ - M^{+}(x,y,\sqrt{2x};f) = - \sum_{p > \sqrt{2x}} f(p) M(x/p,y/p;f)$ conditional on the values $(f(p))_{p\leqslant \sqrt{2x}}$ is the same as the law of $M^{+}(x,y,\sqrt{2x};f)$. Thus we have
$$
\mathbb{E}\bigg[\big| M(x,y;f) \big|^{2q}\bigg] \gg \mathbb{E}\bigg[\big| M^{-}(x,y,\sqrt{2x};f) \big|^{2q}\bigg].
$$
In the decomposition 
$$
\begin{aligned}
M^{-}(x,y,\sqrt{2x};f) & = \sum_{ \sqrt{2x} <p\leqslant x+y} f(p) M\bigg(\frac{x}{p},\frac{y}{p};f\bigg)
\\ & = \sum_{ y <p\leqslant x+y} f(p)  M\bigg(\frac{x}{p},\frac{y}{p};f\bigg) + \sum_{ \sqrt{2x} <p\leqslant y} f(p)  M\bigg(\frac{x}{p},\frac{y}{p};f\bigg),
\end{aligned}
$$
the inner sum in both sums which are determined by $(f(p))_{p> \sqrt{2x}}$ are independent from $ \sum_{\substack{\frac{x}{p}<n\leqslant \frac{x+y}{p}}}f(n)$, by applying the Khintchine’s inequality, we get
$$
\begin{aligned}
    \mathbb{E}\bigg[\big| M^{-}(x,y,\sqrt{2x};f) \big|^{2q}\bigg] & \gg \mathbb{E}\bigg[\bigg(\sum_{y <p }\bigg| M^{+}\bigg(\frac{x}{p},\frac{x+y}{p},\sqrt{2x};f\bigg) \bigg|^{2}
    \\ &\,\,\,\,\,\,\,\,\,\,\,+\sum_{\sqrt{2x}<p\leqslant y}\bigg| M^{+}\bigg(\frac{x}{p},\frac{x+y}{p},\sqrt{2x};f\bigg) \bigg|^{2}\bigg)^q\bigg] 
    \\ & = \mathbb{E}\bigg[\bigg(\sum_{\substack{x< n \leqslant x+y \\ P(n)>y}}1+\sum_{\sqrt{2x}<p\leqslant y}\bigg| M^{+}\bigg(\frac{x}{p},\frac{x+y}{p},\sqrt{2x};f\bigg) \bigg|^{2}\bigg)^q\bigg].
    \end{aligned}
$$
\begin{remark}
   Note that the left-hand side of the expression above is deterministic. One can prove that for any $\varepsilon >0$ and for $\sqrt{x} \leqslant y \leqslant x^{1-\varepsilon}$, the following holds:
    $$
    \sum_{\substack{x< n \leqslant x+y \\ P(n)>y}}1 \gg_{\varepsilon} y.
    $$
    However, this result is irrelevant as we are investigating $M(x,y;f)$ in cases where $y \geqslant ~\frac{x}{\log x}$.
\end{remark}
\noindent In the following, we will focus on $ \sum_{\sqrt{2x}<p\leqslant y}\big| M^{+}\big(\frac{x}{p},\frac{x+y}{p},\sqrt{2x};f\big) \big|^{2}$. We start with this following Lemma.
\begin{lemma}\label{lemma_important1}
Let $f$ be a Rademacher or Steinhaus multiplicative function. There exists a large absolute constant $C_1 > 0$ such that the following is true. Let $x$ be large enough and assume that $y\geqslant \frac{x}{\log x}$. For any large quantity $V \leqslant (\log x )^{0.1}$. We have uniformly for all $2/3\leqslant q\leqslant 1$ 
    \begin{equation}\label{eq_proof_important1}
    \begin{aligned}
            \mathbb{E}\bigg[\big| M^{-}(x,y,\sqrt{2x};f) \big|^{2q}\bigg] \gg & \mathbb{E}\bigg[ \bigg(\frac{y}{\delta\log x} \int_{-\delta}^{\delta} \big|F_{\sqrt{2x}}(it +4V/\log x)\big|^2 {\rm d}t\bigg)^q \bigg]\\ &  -C_1y^q \bigg( \bigg(  \frac{1}{V{\rm e}^{V}}\bigg)^q-\frac{\log_2 x}{\log x}\bigg).
    \end{aligned}
    \end{equation}
\end{lemma}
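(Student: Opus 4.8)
The plan is to run Harper's lower-bound scheme from \cite{Harper} in the short-interval setting. Start from the displayed inequality immediately preceding the statement; since $q\leqslant1$ and the count $\#\{x<n\leqslant x+y:P(n)>y\}$ is nonnegative, it suffices to lower bound $\mathbb{E}[\Sigma^{q}]$ with $\Sigma:=\sum_{\sqrt{2x}<p\leqslant y}|M^{+}(x/p,(x+y)/p,\sqrt{2x};f)|^{2}$. Put $\delta=x/y$ (so $1\leqslant\delta\leqslant\log x$, using $y\geqslant x/\log x$) and $N'(w):=\sum_{w<n\leqslant w(1+1/\delta),\,P(n)\leqslant\sqrt{2x}}f(n)$, so that $M^{+}(x/p,(x+y)/p,\sqrt{2x};f)=N'(x/p)$ for $p\in(\sqrt{2x},y]$. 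By the prime number theorem and partial summation (the mild smoothing needed to cope with the step‑function behaviour of $N'$ being routine, as in \cite{Harper}), $\Sigma\gg\int_{\sqrt{2x}}^{y}|N'(x/t)|^{2}\,\mathrm{d}t/\log t$; the substitution $w=x/t$, the inequality $\log(x/w)\leqslant\log x$ (valid since $w\geqslant\delta\geqslant1$) and the harmless insertion of $w^{-2\sigma}\leqslant1$ with $\sigma:=4V/\log x$ (legitimate because $w\geqslant1$) give
\[
\Sigma\;\gg\;\frac{x}{\log x}\int_{\delta}^{\sqrt{x/2}}|N'(w)|^{2}\,\frac{\mathrm{d}w}{w^{2+2\sigma}}.
\]

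I would then complete the range of integration, writing $\int_{\delta}^{\sqrt{x/2}}=\int_{0}^{\infty}-\int_{0}^{\delta}-\int_{\sqrt{x/2}}^{\infty}$. For the full integral, Parseval's identity (Lemma~\ref{parseval}) applies with the same Mellin–transform computation as in the proof of Lemma~\ref{sum_over_K} (the relevant Dirichlet series being $((1+1/\delta)^{s}-1)\,s^{-1}\sum_{P(n)\leqslant\sqrt{2x}}f(n)n^{-s}$). Restricting the resulting $t$-integral to $|t|\leqslant\delta$ and using the elementary bound $|(1+1/\delta)^{1/2+\sigma+it}-1|^{2}/|1/2+\sigma+it|^{2}\asymp\delta^{-2}$ on that range (write the numerator as $|c\,e^{i\varphi}-1|^{2}$ with $c-1\asymp\delta^{-1}$ and $\varphi\asymp t/\delta$), together with $x/\delta^{2}=y/\delta$, produces exactly the main term
\[
\frac{x}{\log x}\int_{0}^{\infty}|N'(w)|^{2}\,\frac{\mathrm{d}w}{w^{2+2\sigma}}\;\gg\;\frac{y}{\delta\log x}\int_{-\delta}^{\delta}\big|F_{\sqrt{2x}}(it+4V/\log x)\big|^{2}\,\mathrm{d}t .
\]

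It remains to discard the two extra nonnegative pieces. For $w<\delta\leqslant\log x$ the interval $(w,w(1+1/\delta)]$ has length $<1$, so $|N'(w)|\leqslant1$ deterministically, and a direct estimate of the measure of those $w$ whose interval contains an integer gives $\mathcal{B}_{1}:=\frac{x}{\log x}\int_{0}^{\delta}|N'(w)|^{2}w^{-2-2\sigma}\,\mathrm{d}w\ll y\log_{2}x/\log x$, a deterministic quantity which is moreover comparable to the retained count $\#\{x<n\leqslant x+y:P(n)>y\}$ (this is what accounts for the $\log_{2}x/\log x$ contribution in \eqref{eq_proof_important1}). For $w>\sqrt{x/2}$ the smoothness restriction $P(n)\leqslant\sqrt{2x}$ becomes genuine; here $\mathbb{E}|N'(w)|^{2}$ equals the number of $\sqrt{2x}$-smooth integers in $(w,w(1+1/\delta)]$, which by de Bruijn's estimate is $\ll\frac{w}{\delta}\rho\!\big(\tfrac{\log w}{\log\sqrt{2x}}\big)+1$ with $\rho$ Dickman's function, and the factor $w^{-2\sigma}=e^{-2\sigma\log w}$ with $2\sigma\log w\geqslant2\sigma\log\sqrt{x/2}\asymp V$ forces $\mathbb{E}\big[\tfrac{x}{\log x}\int_{\sqrt{x/2}}^{\infty}|N'(w)|^{2}w^{-2-2\sigma}\,\mathrm{d}w\big]\ll\tfrac{y}{V}e^{-cV}$ for an absolute $c>0$, whence by Jensen its $q$-th mean is $\ll(y/(Ve^{cV}))^{q}$. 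Collecting, $\Sigma\geqslant\mathcal{A}-\mathcal{B}_{1}-\mathcal{B}_{2}$ with $\mathcal{A}$ the main term and $\mathcal{B}_{1},\mathcal{B}_{2}\geqslant0$; subadditivity of $u\mapsto u^{q}$ for $q\leqslant1$ gives $\Sigma^{q}\geqslant\mathcal{A}^{q}-\mathcal{B}_{1}^{q}-\mathcal{B}_{2}^{q}$, and taking expectations yields \eqref{eq_proof_important1} (up to the precise bookkeeping of the lower-order terms).

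The main obstacle is the tail $w>\sqrt{x/2}$: without the shift $\sigma$ the contribution of the $\sqrt{2x}$-smooth numbers there is already of size $x/\delta=y$, i.e.\ as large as the whole quantity, so taming it by the exponential damping $w^{-2\sigma}$ — and quantifying the saving via sharp counts for smooth numbers in short intervals — is the real content of the lemma; this is precisely what forces the shift $4V/\log x$ into $F_{\sqrt{2x}}$ and produces the $(Ve^{V})^{-q}$ saving in the error term.
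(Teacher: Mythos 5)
Your proposal is correct and follows essentially the same route as the paper's proof: reduce via the Khintchine display to $\Sigma$, smooth over primes as in Harper, change variables, insert the damping shift $\asymp V/\log x$, complete the integral, and apply Parseval with the kernel bound $|(1+1/\delta)^{1/2+\sigma+it}-1|^2/|1/2+\sigma+it|^2\asymp\delta^{-2}$ on $|t|\leqslant\delta$, the head $w<\delta$ giving the $\log_2 x/\log x$ loss and the damped tail the $(V{\rm e}^{V})^{-q}$ loss; the only real deviation is the tail, which you estimate directly in expectation plus Jensen, whereas the paper compares it with the full integral at the smaller shift $4V/\log x$ and uses $\mathbb{E}\big[\big(\int_0^\infty\cdots\big)^q\big]\ll(\log x/V)^q$ — both give the same error shape. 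Two minor caveats: the appeal to a de Bruijn-type count of $\sqrt{2x}$-smooth numbers in short intervals is neither needed nor actually available in that generality (the trivial bound $\ll w/\delta+1$ suffices, since the saving comes entirely from $w^{-2\sigma}$), and the prime-to-integral smoothing you call routine produces an extra error of size $\ll(y/\log x)^q$ (the edge sums the paper controls via $|x+a+b|^2\geqslant\tfrac14|x|^2-\min\{2(|a|^2+|b|^2),|x/2|^2\}$ and mean-square estimates), which should be carried explicitly but is absorbed by the stated error terms.
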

    \begin{proof}
Note first that 
$$
\mathbb{E}\bigg[\big| M^{-}(x,y,\sqrt{2x};f) \big|^{2q}\bigg] \gg \mathbb{E}\bigg[\bigg(\sum_{\sqrt{2x}<p\leqslant y}\bigg| M^{-}\bigg(\frac{x}{p},\frac{x+y}{p},\sqrt{2x};f\bigg) \bigg|^{2}\bigg)^q\bigg].
$$
We follow the same techniques as in Harper \cite{Harper} in the ``Proof of Proposition 3 and 4". Let $X \geqslant {\rm e}^{\sqrt{\log x}}$. It can be readily checked that the following inequality consistently holds: $|x+ a + b|^2 \geqslant (\frac{1}{4})|x|^2 - \min\{2(|a|^2+|b|^2), |x/2|^2\}$. Consequently, 
$$
\begin{aligned}
    & \sum_{\sqrt{2x} < p\leqslant y} \bigg|M^{+}\bigg(\frac{x}{p},\frac{x+y}{p},\sqrt{2x};f\bigg)\bigg|^2  \\ & = \sum_{\sqrt{2x} < p\leqslant y}  \frac{X}{p} \int^{p(1+1/X)}_{p} \bigg|M^{+}\bigg(\frac{x}{p},\frac{x+y}{p},\sqrt{2x};f\bigg)\bigg|^2{\rm d}t
    \\ &  \geqslant \sum_{\sqrt{2x} < p\leqslant y}  \frac{X}{p} \int^{p(1+1/X)}_{p} \bigg|M^{+}\bigg(\frac{x}{t},\frac{y}{t},\sqrt{2x};f\bigg)\bigg|^2{\rm d}t
    \\ & \,\,\,\, -  \sum_{\sqrt{2x} < p\leqslant y}  \frac{X}{p} \int^{p(1+1/X)}_{p} \min\bigg\{ 2\bigg|M^{+}\bigg(\frac{x+y}{t},\frac{x+y}{p},\sqrt{2x};f\bigg)\bigg|^2{\rm d}t \\ & \,\,\,\,\,\,\,+ 2 \bigg|M^{+}\bigg(\frac{x}{t},\frac{x}{p},\sqrt{2x};f\bigg)\bigg|^2, \frac{1}{4} \bigg|M^{+}\bigg(\frac{x}{t},\frac{y}{t},\sqrt{2x};f\bigg)\bigg|^2 \bigg\} {\rm d}t.
\end{aligned}
$$
thus for any $2/3\leqslant q \leqslant 1 $, we get
$$
\begin{aligned}
    &\mathbb{E}\bigg[\big| M^{-}(x,y,\sqrt{2x};f)\big|^{2q}\bigg]\\ &\gg  \frac{1}{(\log x)^q} \mathbb{E}\bigg[\bigg( \frac{1}{4} \sum_{\sqrt{2x}<p\leqslant y} \log p \frac{X}{p}\int_{p}^{p(1+1/X)} \bigg|M^{+}\bigg(\frac{x}{t},\frac{y}{t},\sqrt{2x};f\bigg)\bigg|^2{\rm d}t\bigg)^q\bigg]
    \\& - \frac{1}{(\log x)^q} \mathbb{E}\bigg[\bigg( 2 \sum_{\sqrt{2x}<p\leqslant y} \log p \frac{X}{p}\int_{p}^{p(1+1/X)} \bigg|\sum_{\substack{\frac{x}{t}<n\leqslant \frac{x}{p} \\ P(n)>\sqrt{2x}}} f(n)\bigg|^2{\rm d}t\bigg)^q\bigg]
    \\& - \frac{1}{(\log x)^q} \mathbb{E}\bigg[\bigg( 2 \sum_{\sqrt{2x}<p\leqslant y} \log p \frac{X}{p}\int_{p}^{p(1+1/X)} \bigg|\sum_{\substack{\frac{x+y}{t}<n\leqslant \frac{x+y}{p} \\ P(n)>\sqrt{2x}}} f(n)\bigg|^2{\rm d}t\bigg)^q\bigg].
\end{aligned}
$$
We deal first with the last two terms. By H\"{o}lder's inequality and by using the same techniques as in the inequality \eqref{inequality_0101}, we have
$$
\begin{aligned}
    & \mathbb{E}\bigg[\bigg(  \sum_{\sqrt{2x}<p\leqslant y} \log p \frac{X}{p}\int_{p}^{p(1+1/X)} \bigg|\sum_{\substack{\frac{x+y}{t}<n\leqslant \frac{x+y}{p} \\ P(n)>\sqrt{2x}}} f(n)\bigg|^2{\rm d}t\bigg)^q\bigg]
    \\ &\leqslant \bigg(  \sum_{\sqrt{2x}<p\leqslant y} \log p \frac{X}{p}\int_{p}^{p(1+1/X)} \mathbb{E}\bigg[\bigg|\sum_{\substack{\frac{x+y}{t}<n\leqslant \frac{x+y}{p} \\ P(n)>\sqrt{2x}}} f(n)\bigg|^2\bigg]{\rm d}t\bigg)^q
    \\ & \ll \bigg(  \sum_{\sqrt{2x}<p\leqslant y} \log p \bigg(\frac{x}{pX}+1\bigg)\bigg)^q\ll \bigg(\frac{x\log x}{X}+y\bigg)^q\ll y^q\bigg( \log x\frac{\delta}{X}+1\bigg)^q.
\end{aligned}
$$
Recall that $\delta \ll \log x$ and $X \geqslant {\rm e}^{\sqrt{\log x}}$. We obtain
$$
\mathbb{E}\bigg[\bigg(\frac{1}{\log x} \sum_{\sqrt{2x}<p\leqslant x+y} \log p \frac{X}{p}\int_{p}^{p(1+1/X)} \bigg|\sum_{\substack{\frac{x+y}{t}<n\leqslant \frac{x+y}{p} \\ P(n)>\sqrt{2x}}} f(n)\bigg|^2{\rm d}t\bigg)^q\bigg] \ll \bigg( \frac{y}{\log x}\bigg)^q.
$$
We prove similarly that
$$
\mathbb{E}\bigg[\bigg(\frac{1}{\log x} \sum_{\sqrt{2x}<p\leqslant x+y} \log p \frac{X}{p}\int_{p}^{p(1+1/X)} \bigg|\sum_{\substack{\frac{x}{t}<n\leqslant \frac{x}{p} \\ P(n)>\sqrt{2x}}} f(n)\bigg|^2{\rm d}t\bigg)^q\bigg] \ll \bigg(\frac{y}{\log x}\bigg)^q.
$$
Let deal now with $\sum_{\sqrt{2x}<p\leqslant y} \log p \frac{X}{p}\int_{p}^{p(1+1/X)} \big|M^{+}\big(\frac{x}{t},\frac{y}{t},\sqrt{2x};f\big)\big|^2 {\rm d}t$.\\
We have 
$$
\begin{aligned}
    &\sum_{\sqrt{2x}<p\leqslant y} \log p \frac{X}{p}\int_{p}^{p(1+1/X)} \bigg|M^{+}\bigg(\frac{x}{t},\frac{y}{t},\sqrt{2x};f\bigg)\bigg|^2{\rm d}t \\& \gg \int_{y}^{\sqrt{2x}} \sum_{\frac{t}{1+\frac{1}{X}}< p <t} \log p \frac{X}{p} \bigg|M^{+}\bigg(\frac{x}{t},\frac{y}{t},\sqrt{2x};f\bigg)\bigg|^2 {\rm d}t
    \\ & \gg \int^{y}_{\sqrt{2x}} \bigg|M^{+}\bigg(\frac{x}{t},\frac{y}{t},\sqrt{2x};f\bigg)\bigg|^2 {\rm d}t
     = y  \int^{y/\sqrt{2x}}_{1} \big|M^{+}(\delta z,z,\sqrt{2x};f)\big|^2 \frac{{\rm d}z}{z^2}.
\end{aligned}
$$
We used the fact that $\sum_{\frac{t}{1+\frac{1}{X}}< p <t} \log p \frac{X}{p} \asymp 1$ followed by a change of variable $z=y/t$. Let's focus on $ \int^{y/\sqrt{2x}}_{1} \big|M^{+}(\delta z,z,\sqrt{2x};f)\big|^2 \frac{{\rm d}z}{z^2}$, we have
$$
\begin{aligned}
& \int^{y/\sqrt{2x}}_{1} \big|M^{+}(\delta z,z,\sqrt{2x};f)\big|^2 \frac{{\rm d}z}{z^2} \\ & = \int^{y/\sqrt{2x}}_{0} \big|M^{+}(\delta z,z,\sqrt{2x};f)\big|^2 \frac{{\rm d}z}{z^2} - \int^{1}_{0} \big|M^{+}(\delta z,z,\sqrt{2x};f)\big|^2 \frac{{\rm d}z}{z^2}.
\end{aligned}
$$
Note that $$\mathbb{E}\bigg[\frac{1}{\log x}\int^{1}_{0} \big|M^{+}(\delta z,z,\sqrt{2x};f)\big|^2 \frac{{\rm d}z}{z^2}\bigg] \leqslant \frac{1}{\log x}\sum_{1\leqslant n\leqslant \delta+1 } \frac{1}{n}  \asymp \frac{\log \delta}{\log x}\ll \frac{\log_2 x}{\log x}. $$
Let focus now on $  \int^{y/\sqrt{2x}}_{0} \big|M^{+}(\delta z,z,\sqrt{2x};f)\big|^2 \frac{{\rm d}z}{z^2}$.
Let $V$ be a large number such that $V<(\log x)^{0.1}$, we have for $x$ large enough
\begin{equation}\label{ineq01}
\begin{aligned}
& \mathbb{E}\bigg[\bigg(\int^{y/\sqrt{2x}}_{0} \big|M^{+}(\delta z,z,\sqrt{2x})\big|^2 \frac{{\rm d}z}{z^2} \bigg)^q\bigg] \\ & \geqslant 2^{-q} \mathbb{E}\bigg[\bigg(\int^{y/\sqrt{2x}}_{0} \big|M^{+}(\delta z,z,\sqrt{2x};f)\big|^2 \frac{{\rm d}z}{z^{2+ 8V/\log x}} \bigg)^q\bigg]
\\ & \geqslant \frac{1}{2} \mathbb{E}\bigg[\bigg(\int^{+\infty}_{0} \big|M^{+}(\delta z,z,\sqrt{2x};f)\big|^2 \frac{{\rm d}z}{z^{2+ 8V/\log x}} \bigg)^q\bigg]
\\ & \,\,\,\,\,\,-\frac{1}{2}\mathbb{E}\bigg[\bigg(\int^{+\infty}_{y/\sqrt{2x}} \big|M^{+}(\delta z,z,\sqrt{2x};f)\big|^2 \frac{{\rm d}z}{z^{2+ 8V/\log x}} \bigg)^q\bigg]
\\ & \geqslant \frac{1}{2} \mathbb{E}\bigg[\bigg(\int^{+\infty}_{0} \big|M^{+}(\delta z,z,\sqrt{2x};f)\big|^2 \frac{{\rm d}z}{z^{2+ 8V/\log x}} \bigg)^q\bigg]
\\ & \,\,\,\,\,\,-\frac{1}{2{\rm e}^{Vq}}\mathbb{E}\bigg[\bigg(\int^{+\infty}_{0} \big|M^{+}(\delta z,z,\sqrt{2x};f)\big|^2 \frac{{\rm d}z}{z^{2+ 4V/\log x}} \bigg)^q\bigg].
\end{aligned}
\end{equation}
By using Lemma \ref{parseval}, we have $$ 
\begin{aligned}
&\mathbb{E}\bigg[\bigg(\int^{+\infty}_{0} \big|M^{+}(\delta z,z,\sqrt{2x};f)\big|^2 \frac{{\rm d}z}{z^{2+ 8V/\log x}} \bigg)^q\bigg]
  \gg \mathbb{E}\bigg[ \bigg(\frac{1}{\delta} \int_{-\delta}^{\delta} \big|F_{\sqrt{2x}}(it +4V/\log x)\big|^2 {\rm d}t\bigg)^q \bigg]
\end{aligned}
$$
where $F_{\sqrt{2x}}$ is defined in \eqref{definition_F_z} for $z=\sqrt{2x}$.
Note that we have as well
$$
\begin{aligned}
    \mathbb{E}\bigg[\bigg(\int^{+\infty}_{0} \big|M^{+}(\delta z,z &,\sqrt{2x};f)\big|^2 \frac{{\rm d}z}{z^{2+ 4V/\log x}} \bigg)^q\bigg] \\ & \leqslant \bigg(\int^{+\infty}_{0} \mathbb{E}\bigg[\big|M^{+}(\delta z,z,\sqrt{2x};f)\big|^2 \bigg]\frac{{\rm d}z}{z^{2+ 4V/\log x}} \bigg)^q 
    \\ & \ll \bigg( \frac{\log \delta}{V}+ \frac{\log x}{V}\bigg)^q\ll \bigg(  \frac{\log x}{V}\bigg)^q.
\end{aligned}
$$
At the end, and by gathering all previous inequalities, we deduce that there exists a large constant $C_1$ such that
$$
\begin{aligned}
            \mathbb{E}\bigg[\bigg|\frac{1}{\sqrt{y}}M^{+}\big( x,y,\sqrt{2x};f \big) \bigg|^{2q}\bigg] \gg & \,\,\, \mathbb{E}\bigg[ \bigg(\frac{1}{\delta\log x} \int_{-\delta}^{\delta} \big|F_{\sqrt{2x}}(it +4V/\log x)\big|^2 {\rm d}t\bigg)^q \bigg]\\ &  -  C_1 \bigg( \bigg(  \frac{1}{V{\rm e}^{V}}\bigg)^q - \bigg(\frac{\log_2 x}{\log x}\bigg)^q\bigg).
\end{aligned}
$$
\end{proof}
\noindent Following Harper \cite[section 5.2]{Harper}, we define the following event $L(t)$ by: for all\\ $ B+2 \leqslant k \leqslant \log_2 x -\lfloor\log V\rfloor -3 $, we have
\begin{equation}\label{event_t}
     -Ba_k(q,x) \leqslant  \sum^{k}_{m=1}  \log \left|I_{u_m}\left(\frac{4V}{\log x}, t\right)\right| \leqslant  a_k(q,x)
\end{equation}
where $$a(q,x) := \min\bigg\{ \sqrt{\log_2 x}, \frac{1}{1-q}+ \frac{\theta}{4}\log_2 x  \bigg\} .$$
For Steinhaus case, we define  the set
$$
    \mathcal{L}^{St} := \big\{|t| \leqslant 2 \delta : L(t) \text{ defined by \eqref{event_t}} \text{ holds}  \big\}
$$
and set
\begin{equation}\label{random_set_steinhaus}
\mathcal{S}:= \mathcal{L}^{St}\cap [-\delta,\delta].
\end{equation}
For Rademacher case, we define the event
\begin{equation}\label{random_set_Rademacher}
\mathcal{L}^{Rad} := \big\{\frac{1}{2}\leqslant|t| \leqslant  \delta : L(t) \text{ defined by \eqref{event_t}} \text{ holds}  \big\}.
\end{equation}
We fixe $\mathbb{1}_{L(t)}=0$ for $|t|< 1/2$ in the Rademacher case.
\begin{prop}\label{prop001}
    Let $f$ be a Rademacher or Steinhaus random multiplicative function. Let $\mathcal{H}$ to be $\mathcal{S}$ or $\mathcal{L}^{rad}$. Let $x$ be large enough. Let $y\geqslant\frac{x}{\log x}$  and let $V$ be a large constant. Then uniformly for any $2/3 \leqslant q\leqslant 1$ and $ \theta  \gg \frac{1}{\sqrt{\log_2 x}}$, we have 
    \begin{equation}
        \mathbb{E}\bigg[ \bigg(\frac{1}{\delta} \int_{\mathcal{H}} \big|F_x(it +4V/\log x)\big|^2 {\rm d}t\bigg)^q \bigg] \gg \bigg(\frac{\log x}{V}\bigg)^q.
    \end{equation}
\end{prop}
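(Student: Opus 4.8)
The plan is to run a second moment (Paley--Zygmund) argument in the spirit of Harper's Section~5.2. Write $Z:=\frac{1}{\delta}\int_{\mathcal{H}}\big|F_x(it+4V/\log x)\big|^2\,{\rm d}t$, a non-negative random variable. The starting point is Hölder's inequality in the form $\mathbb{E}[Z]\leqslant\big(\mathbb{E}[Z^q]\big)^{1/(2-q)}\big(\mathbb{E}[Z^2]\big)^{(1-q)/(2-q)}$, which rearranges to
\[
\mathbb{E}[Z^q]\ \geqslant\ \frac{\big(\mathbb{E}[Z]\big)^{2-q}}{\big(\mathbb{E}[Z^2]\big)^{1-q}}.
\]
So it suffices to prove the two bounds $\mathbb{E}[Z]\gg\log x/V$ and $\mathbb{E}[Z^2]\ll(\log x/V)^2$; together they give $\mathbb{E}[Z^q]\gg(\log x/V)^{2-q}/(\log x/V)^{2(1-q)}=(\log x/V)^q$, which is the assertion.

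\emph{First moment.} By Fubini, $\mathbb{E}[Z]=\frac{1}{\delta}\int\mathbb{E}\big[\mathbb{1}_{L(t)}\,|F_x(it+4V/\log x)|^2\big]\,{\rm d}t$, the integral extending over $|t|\leqslant\delta$ in the Steinhaus case ($\mathcal{H}=\mathcal{S}$) and over $1/2\leqslant|t|\leqslant\delta$ in the Rademacher case. I would pass to the tilted measure $\widetilde{\mathbb{P}}_t$ with density $|F_x(it+4V/\log x)|^2/\mathbb{E}[|F_x(it+4V/\log x)|^2]$, so that the integrand equals $\mathbb{E}[|F_x(it+4V/\log x)|^2]\cdot\widetilde{\mathbb{P}}_t[L(t)]$. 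By Lemma~\ref{lemmarachid2} and Mertens' theorem, $\mathbb{E}[|F_x(it+4V/\log x)|^2]\asymp\log x/V$ uniformly in $t$ (the shift $4V/\log x$ truncates the prime sum at $\log(\log x/(4V))$). For the probability, the Berry--Esseen analysis of Lemmas~\ref{Key_3_2} and~\ref{Key3_1} shows that under $\widetilde{\mathbb{P}}_t$ the increments $\log|I_{u_m}(4V/\log x,t)|$ are well approximated by independent Gaussians of variance $\asymp1$ and drift $\asymp1$; subtracting the drift recasts $L(t)$ as the event that a mean-zero random walk of $\asymp\log_2 x$ steps stays below $a(q,x)+2\log k+O(1)$ and above a steep linear barrier that is never binding. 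By the ballot estimate (Lemma~\ref{gaussian_approximation}) this probability is $\asymp\min\{1,a(q,x)/\sqrt{\log_2 x}\}$, and since in the regime $\theta\gg1/\sqrt{\log_2 x}$ one has $\frac{\theta}{4}\log_2 x\geqslant\sqrt{\log_2 x}$, the minimum defining $a(q,x)$ is attained by $\sqrt{\log_2 x}$, so $\widetilde{\mathbb{P}}_t[L(t)]\asymp1$. As the range of integration has length $\asymp\delta$, we conclude $\mathbb{E}[Z]\gg\log x/V$.

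\emph{Second moment.} This is the crux: one must show that $\mathbb{E}[Z^2]=\frac{1}{\delta^2}\iint\mathbb{E}\big[\mathbb{1}_{L(t_1)}\mathbb{1}_{L(t_2)}|F_x(it_1+4V/\log x)|^2|F_x(it_2+4V/\log x)|^2\big]\,{\rm d}t_1\,{\rm d}t_2\ll(\log x/V)^2$, i.e.\ that the pair correlation essentially factorizes. I would split the Euler product $F_x$ at the scale $x^{{\rm e}^{-j^\star}}\approx {\rm e}^{1/|t_1-t_2|}$: over primes $p>x^{{\rm e}^{-j^\star}}$ one has $|t_1-t_2|\log p\gg1$, and Lemma~\ref{lemma_Xu} (relation \eqref{eq_Xu2}) shows that the expectation of the corresponding product over the two copies factorizes, up to a multiplicative constant, as the product of two single-copy second moments; over primes $p\leqslant x^{{\rm e}^{-j^\star}}$ the barrier event $L(t_1)$ controls $\prod_{m\leqslant j}|I_{u_m}(4V/\log x,t_1)|\leqslant {\rm e}^{a_j(q,x)}$, and since $\log|I_{u_m}(4V/\log x,t_1)|=\log|I_{u_m}(4V/\log x,t_2)|+O(1/m^2)$ for $m$ below the scale $j^\star$, this part contributes at most ${\rm e}^{O(a_{j^\star}(q,x))}$ times the square of a single copy. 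Integrating against $\frac{1}{\delta^2}\,{\rm d}t_1\,{\rm d}t_2$ over the dyadic scales $|t_1-t_2|\asymp2^{-j}$ and summing, the multiplicative losses form a convergent series and yield $\mathbb{E}[Z^2]\ll(\log x/V)^2$; the near-diagonal range $|t_1-t_2|\ll 1/\log x$ is handled separately by bounding directly with $\mathbb{E}[\mathbb{1}_{L(t_1)}|F_x(it_1+4V/\log x)|^4]$, which $L(t_1)$ keeps of size $O((\log x/V)^2)$ after integration over the $O(1/\log x)$ neighbourhood. I expect this second moment estimate — the bookkeeping that keeps the accumulated loss $O(1)$ uniformly in $q\in[2/3,1]$, together with the near-diagonal term — to be the principal difficulty; the Rademacher case brings the extra $\cos(2v_m(t)\log p)$ contributions to the variances, handled exactly as in Lemmas~\ref{Key_3_2}--\ref{Key3_1}.
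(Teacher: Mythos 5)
Your skeleton coincides with the paper's: Hölder in the form $\mathbb{E}[Z^q]\geqslant(\mathbb{E}[Z])^{2-q}/(\mathbb{E}[Z^2])^{1-q}$, and a first moment bound $\mathbb{E}[Z]\gg\log x/V$ obtained by tilting by $|F_x(it+4V/\log x)|^2$ and showing $\widetilde{\mathbb{P}}_t[L(t)]\gg1$ via the ballot estimate (this is exactly how Proposition \ref{prop001} is deduced from Proposition \ref{prop002}, using \eqref{equation_P_asymp}). The gap is in your second moment. You claim the clean bound $\mathbb{E}[Z^2]\ll(\log x/V)^2$, and in particular that on the near-diagonal the barrier keeps $\mathbb{E}\big[\mathbb{1}_{L(t_1)}|F_x(it_1+4V/\log x)|^4\big]$ so small that integrating over a $1/\log x$-neighbourhood gives $O((\log x/V)^2)$. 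That step fails: the event $L(t)$ only forces the partial sums to stay below $a_k(q,x)$ with $a(q,x)=\min\{\sqrt{\log_2 x},\tfrac{1}{1-q}+\tfrac{\theta}{4}\log_2 x\}$, so the restricted fourth moment (equivalently the two-point correlation at small separations) genuinely carries an extra factor of order ${\rm e}^{2a(q,x)}$, which can be as large as ${\rm e}^{2\sqrt{\log_2 x}}$. This is precisely the content of Lemma \ref{lemma_harper_01} (Harper's Key Proposition 5), whose bound ${\rm e}^{2\min\{\sqrt{\log_2 x},\,1/(1-q)+\frac{\theta}{2}\log_2 x\}}(\log x/V)^{2}$ for the $|t|\leqslant 1$ range is essentially sharp; and your assertion that over dyadic separations ``the multiplicative losses form a convergent series'' is exactly this hard decoupling estimate, asserted rather than proved.

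For comparison, the paper never proves $\mathbb{E}[Z^2]\ll(\log x/V)^2$. After the change of variable $t=t_1-t_2$, the range $|t|>1$ contributes $\ll(\log x/V)^2$ via Lemma \ref{lemma_Xu}, while the band $|t|\leqslant1$ contributes $\tfrac{1}{\delta}\,{\rm e}^{2\min\{\cdots\}}(\log x/V)^2$; the exponential factor is then neutralised only by combining the prefactor $1/\delta={\rm e}^{-\theta\log_2 x}$ with the bound $\min\{\cdots\}\leqslant\tfrac{1}{1-q}+\tfrac{\theta}{2}\log_2 x$ and, crucially, the fact that the second moment enters Hölder only raised to the power $1-q$ (which is why Proposition \ref{prop002} is stated with that exponent already applied): the residual $\big({\rm e}^{2/(1-q)}\big)^{1-q}$ is $O(1)$. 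Your near-diagonal treatment drops the $1/\delta$ and never exploits the $1-q$ power, so uniformity in $q$ (in particular $q$ close to $1$, or $\theta\sqrt{\log_2 x}$ merely bounded below) is lost. To repair the proposal, either invoke (or reprove) an analogue of Lemma \ref{lemma_harper_01} and run the paper's bookkeeping, or prove the factorised second-moment bound with the explicit ${\rm e}^{2\min\{\cdots\}}/\delta$ loss and only then raise it to the power $1-q$.
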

\begin{proof}[Proof of Theorem \ref{theoreme_principal_2} assuming  Proposition \ref{prop001}.]
By inserting these dual limits into inequality \eqref{eq_proof_important1} of Lemma \ref{lemma_important1} and selecting a sufficiently large constant value V, the factor $C_1/{\rm e}^{V}$ in that expression effectively nullifies the impact of the implicit constants. This ends the proof of Theorem \ref{theoreme_principal_2}.
\end{proof}
\noindent We postpone the proof of Proposition \ref{prop001}, we need first the following proposition.
\begin{prop}\label{prop002}
    Let $f$ be a Rademacher or Steinhaus multiplicative function. Let $\mathcal{H}$ to be $\mathcal{S}$ or $\mathcal{L}^{rad}$. Let $x$ be large enough, $\theta \gg \frac{1}{\sqrt{\log_2 x}}$ and $V$ be a large constant. Then uniformly for any $2/3 \leqslant q\leqslant 1$, we have
    \begin{equation}\label{equation_0101S}
        \bigg(\mathbb{E}\bigg[ \bigg(\frac{1}{\delta} \int_{\mathcal{H}} \big|F_x(it +4V/\log x)\big|^2 {\rm d}t\bigg)^2 \bigg]\bigg)^{1-q} \ll \bigg(\frac{\log x}{V}\bigg)^{2(1-q)}.
    \end{equation}
\end{prop}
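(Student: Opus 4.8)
The plan is to run the usual second-moment computation for such integrals, in the spirit of \cite[\S5.2]{Harper}, with the new point being that all the saving is squeezed out of the barrier event $L(t)$ of \eqref{event_t}. Write $\sigma:=4V/\log x$ and $X:=\frac1\delta\int_{\mathcal{H}}|F_x(it+\sigma)|^2\,{\rm d}t$. Since $\mathcal{H}\subseteq[-\delta,\delta]$ and membership of $t$ in $\mathcal{H}$ is recorded by $\mathbb{1}_{L(t)}$ (in the Rademacher case the side condition $|t|\geqslant\tfrac12$ only shrinks the domain), expanding the square gives
\[
\mathbb{E}[X^2]=\frac1{\delta^2}\int_{-\delta}^{\delta}\!\!\int_{-\delta}^{\delta}\mathbb{E}\Big[\mathbb{1}_{L(t_1)}\mathbb{1}_{L(t_2)}\,|F_x(it_1+\sigma)|^2|F_x(it_2+\sigma)|^2\Big]\,{\rm d}t_1\,{\rm d}t_2.
\]
Because $0\leqslant 1-q\leqslant\tfrac13$, it will suffice to bound the integrand at each pair $(t_1,t_2)$ up to a multiplicative loss of size $e^{a(q,x)}$: indeed $a(q,x)=\min\{\sqrt{\log_2 x},\tfrac1{1-q}+\tfrac\theta4\log_2 x\}$ forces $e^{(1-q)a(q,x)}\leqslant e\cdot\delta^{(1-q)/4}$ (recall $\delta=(\log x)^{\theta}$), and $\theta\gg1/\sqrt{\log_2 x}$ makes $\delta^{3/4}=e^{(3\theta/4)\log_2 x}$ exceed every fixed power of $\log_2 x$, which is all one loses elsewhere.

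Next I would fix $t_1\neq t_2$, set $u:=t_1-t_2$ and $P:=\min\{e^{1/|u|},x^{c/V}\}$, where $x^{c/V}$ is the largest prime threshold on which $L(\cdot)$ depends, and split $F_x(it_j+\sigma)=F_P(it_j+\sigma)\,\mathcal{E}_j$ with $\mathcal{E}_j:=\prod_{P<p\leqslant x}(1+a_f f(p)p^{-1/2-it_j-\sigma})^{a_f}$. Discarding the barrier constraints at indices beyond the block reaching $P$ enlarges $L(t_1)$ to an event $L_P(t_1)$ depending only on $\{f(p):p\leqslant P\}$, so independence of $\{f(p):p\leqslant P\}$ and $\{f(p):P<p\leqslant x\}$ yields
\[
\mathbb{E}\Big[\mathbb{1}_{L(t_1)}\mathbb{1}_{L(t_2)}|F_x(it_1+\sigma)|^2|F_x(it_2+\sigma)|^2\Big]\leqslant\mathbb{E}\Big[\mathbb{1}_{L_P(t_1)}|F_P(it_1+\sigma)|^2|F_P(it_2+\sigma)|^2\Big]\cdot\mathbb{E}\big[|\mathcal{E}_1|^2|\mathcal{E}_2|^2\big].
\]
For the tail factor, $P\geqslant e^{1/|u|}$ puts us in the regime of \eqref{eq_Xu2} (the Rademacher analogue being the $a_f$-twisted version of \cite{Harper}), so with $\sum_{p\leqslant y}p^{-1-2\sigma}=\log\min\{\log y,1/(2\sigma)\}+O(1)$ from the prime number theorem one gets $\mathbb{E}[|\mathcal{E}_1|^2|\mathcal{E}_2|^2]\ll(\log x/(V\mu))^2$, where $\mu:=\min\{1/|u|,\log x/V\}$. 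For the head factor, tilting by the product of the two squared Euler products over $p\leqslant P$ — whose expectation is $\asymp\mu^4$ by \eqref{eq_Xu1}, since $|u\log p|\leqslant1$ for $p\leqslant P\leqslant e^{1/|u|}$ so that the two frequencies are in phase — produces a measure $\widehat{\mathbb{P}}$ with
\[
\mathbb{E}\Big[\mathbb{1}_{L_P(t_1)}|F_P(it_1+\sigma)|^2|F_P(it_2+\sigma)|^2\Big]\asymp\mu^4\,\widehat{\mathbb{P}}\big[L_P(t_1)\big].
\]
Under $\widehat{\mathbb{P}}$ each increment $\log|I_{u_m}(\sigma,t_1)|$ has mean $\approx2$ rather than $\approx1$ (the in-phase condition being what doubles it), so approximating these increments by Gaussians exactly as in Lemmas \ref{Key_3_2}--\ref{Key3_1} (now for the double tilt), the event $L_P(t_1)$ — namely $\sum_{m\leqslant k}\log|I_{u_m}(\sigma,t_1)|\leqslant k+a(q,x)+2\log k$ for $k$ up to $\kappa:=\kappa(P)\asymp\log\log P$, together with the easily-met lower barrier — becomes the event that a mean-zero random walk with step-variances in $[\tfrac1{20},20]$ stays below the downward-sloping line $k\mapsto-k+a(q,x)+2\log k$ for all $k\leqslant\kappa$; a Cramér/ballot estimate then gives $\widehat{\mathbb{P}}[L_P(t_1)]\ll e^{a(q,x)-\kappa}(\log_2 x)^{O(1)}$.

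Combining these three displays and using $e^{\kappa}\asymp\mu$ gives, for all $t_1\neq t_2$,
\[
\mathbb{E}\Big[\mathbb{1}_{L(t_1)}\mathbb{1}_{L(t_2)}|F_x(it_1+\sigma)|^2|F_x(it_2+\sigma)|^2\Big]\ll e^{a(q,x)}(\log_2 x)^{O(1)}\Big(\frac{\log x}{V}\Big)^2\min\Big\{\frac1{|u|},\frac{\log x}{V}\Big\}.
\]
Integrating over $[-\delta,\delta]^2$ and using $\frac1{\delta^2}\int_{-\delta}^{\delta}\!\int_{-\delta}^{\delta}\min\{|t_1-t_2|^{-1},\log x/V\}\,{\rm d}t_1\,{\rm d}t_2\ll\delta^{-1}\log_2 x$, one arrives at $\mathbb{E}[X^2]\ll e^{a(q,x)}(\log_2 x)^{O(1)}\delta^{-1}(\log x/V)^2$; raising to the power $1-q$ and invoking the first paragraph collapses $e^{(1-q)a(q,x)}$ to a constant and leaves a factor $\delta^{-3(1-q)/4}(\log_2 x)^{O(1-q)}\ll1$, which is the assertion (for $q=1$ the statement is trivial).

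I expect the main obstacle to be the ballot estimate $\widehat{\mathbb{P}}[L_P(t_1)]\ll e^{a(q,x)-\kappa}(\log_2 x)^{O(1)}$. One must confirm that under the double tilt the increment means exceed $1$ by a fixed positive amount uniformly in all parameters — so that against the slope-one barrier $a_k(q,x)$ the centred walk feels a genuine negative drift — and then extract the exponential rate $e^{-\kappa}$ with only a polynomial-in-$\log_2 x$ loss, i.e. with the correct constant $1$ in the exponent, which is precisely where the per-increment variance being $\asymp\tfrac12$ enters, all while controlling the mismatch between the tilt frequencies $t_1,t_2$ and the shift $\sigma$ and the fact that $L_P$ only constrains indices $k\leqslant\kappa$. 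This is the exact analogue, with a moving barrier, of the computation carried out in \cite[\S5.2]{Harper}.
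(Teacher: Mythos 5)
Your overall skeleton (expand the square, bound the correlation pointwise, use $\delta^{-1}$ together with $a(q,x)\leqslant\frac{1}{1-q}+\frac{\theta}{4}\log_2 x$ to kill everything after raising to the power $1-q$) is sound, but the proposal has a genuine gap at its centre: the estimate $\widehat{\mathbb{P}}[L_P(t_1)]\ll {\rm e}^{a(q,x)-\kappa}(\log_2 x)^{O(1)}$ under the doubly tilted measure is asserted, not proved, and you yourself flag it as the main obstacle. That estimate (equivalently, the pointwise correlation bound $\mathbb{E}[\mathbb{1}_{L(t_1)}\mathbb{1}_{L(t_2)}|F_x|^2|F_x|^2]\ll {\rm e}^{O(a(q,x))}(\log x/V)^2\min\{|t_1-t_2|^{-1},\log x/V\}$ for $|t_1-t_2|\leqslant 1$) is precisely the content of Harper's Key Proposition 5, whose proof occupies the hardest part of \cite[Section 5]{Harper}; the paper does not reprove it but imports it, with $\min\{\sqrt{\log_2 x},\frac{1}{1-q}\}$ replaced by $\min\{\sqrt{\log_2 x},\frac{1}{1-q}+\frac{\theta}{4}\log_2 x\}$, as Lemma \ref{lemma_harper_01}. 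Note also that the point you worry about (getting the constant $1$ rather than $2$ in front of $a(q,x)$) is not actually the issue: with your factor $\delta^{-1}=e^{-\theta\log_2 x}$ even ${\rm e}^{2a(q,x)}$ is affordable, exactly as in the paper where the factor ${\rm e}^{2\min\{\sqrt{\log_2 x},1/(1-q)+\frac{\theta}{2}\log_2 x\}}$ is absorbed; the issue is that the tilted ballot computation itself is missing, so the proposal defers rather than supplies the proof.

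Two further remarks. First, your per-pair bound is false as stated in the regime $|t_1-t_2|>1$: there the correlation (after dropping the indicators) is $\asymp(\log x/V)^2$ with no $1/|u|$ decay, which exceeds ${\rm e}^{a(q,x)}(\log_2 x)^{O(1)}(\log x/V)^2/|u|$ once $|u|\gg {\rm e}^{a(q,x)}(\log_2 x)^{O(1)}$, a range that occurs since $|u|$ runs up to $2\delta=2(\log x)^{\theta}$. This regime is easy, and should simply be handled as in the paper: drop the indicators and apply Lemma \ref{lemma_Xu}, inequality \eqref{eq_Xu2}, getting a contribution $\ll(\log x/V)^2$, which is already of the target size. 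Second, for comparison: the paper's proof is much shorter than your plan because, in the Steinhaus case, it first uses the translation invariance in law of $f(n)n^{it}$ to collapse the double integral to $\frac{1}{\delta}\int_{-2\delta}^{2\delta}\mathbb{E}[\mathbb{1}_{L(0)}|F_x(4V/\log x)|^2\mathbb{1}_{L(t)}|F_x(it+4V/\log x)|^2]\,{\rm d}t$, splits at $|t|=1$, and cites Lemma \ref{lemma_harper_01} for $|t|\leqslant1$ and \eqref{eq_Xu2} for $|t|>1$. Your double-integral formulation would avoid the translation-invariance step (which the paper notes is unavailable in the Rademacher case), but only if the underlying correlation estimate is actually established, which is the step your proposal leaves open.
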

    \begin{proof}[Proof of Proposition \ref{prop001} assuming Proposition \ref{prop002}.] In this proof, we will focus  on Steinhaus case, the proof in the Rademacher case is similar.
    By H\"{o}lder's inequality, we have
    $$
    \mathbb{E}\bigg[ \bigg(\frac{1}{\delta} \int_{\mathcal{S}} \big|F_x(it +4V/\log x)\big|^2 {\rm d}t\bigg)^q \bigg]  \geqslant \frac{\bigg(\mathbb{E}\big[ \frac{1}{\delta} \int_{\mathcal{S}} \big|F_x(it +4V/\log x)\big|^2 {\rm d}t \big]\bigg)^{2-q} }{\bigg(\mathbb{E}\big[ \big(\frac{1}{\delta} \int_{\mathcal{S}} \big|F_x(it +4V/\log x)\big|^2 {\rm d}t\big)^2 \big] \bigg)^{1-q}}.
    $$
    We give first a lower bound on the numerator. We have
    $$
    \begin{aligned}
        \mathbb{E}\big[ \frac{1}{\delta} \int_{\mathcal{S}} \big|F_x(it +4V/\log x)\big|^2 {\rm d}t \big] & =  \frac{1}{\delta} \int_{-\delta}^{\delta} \mathbb{E}\big[ \mathbb{1}_{L(t)} \big|F_x(it +4V/\log x)\big|^2 \big] {\rm d}t 
        \\& \geqslant  \frac{1}{\delta} \int_{\substack{\frac{1}{\sqrt{\log_2 x}}<|t|\leqslant \delta }} \mathbb{E}\big[ \mathbb{1}_{L(t)} \big|F_x(it +4V/\log x)\big|^2 \big] {\rm d}t.
    \end{aligned}
    $$
        By using inequality \eqref{equation_P_asymp} in Lemma \ref{Key3_1} and  by taking $R=\lfloor \log_2 x \rfloor -(B+3) - ~\lfloor \log V \rfloor$, \\$a(q,x)=  \min\{\sqrt{\log_2 x},\frac{1}{1-q}+ \frac{\theta}{4}\log_2 x\}$, $h(j)=2\log j$ and since $ \theta \gg \frac{1}{\sqrt{\log_2 x}}$, we get $\widetilde{\mathbb{P}}_t(\mathbb{1}_{L(t)})\gg 1$. We have then
        $$
        \big(  \mathbb{E}\big[ \mathbb{1}_{L(t)} \big|F_x(4V/\log x)\big|^2 \big] \big)^{2-q} \gg \big(   \mathbb{E}\big[ \big|F_x(4V/\log x)\big|^2 \big] \big)^{2-q} \gg \bigg( \frac{\log x}{V}\bigg)^{2-q}.
        $$
        Combining with the \eqref{equation_0101S}, we get then
        $$
        \mathbb{E}\bigg[ \bigg(\frac{1}{\delta} \int_{\mathcal{S}} \big|F_x(it +4V/\log x)\big|^2 {\rm d}t\bigg)^q \bigg] \gg \bigg( \frac{\log x}{V}\bigg)^{q}.
        $$
    \end{proof}

\noindent Before starting the proof of  Proposition \ref{prop002}, we need first the following Lemma.

    \begin{lemma}[Harper]\label{lemma_harper_01}
    Let $f$ be a Steinhaus multiplicative function. Let $x$ be large enough and $V$ be a large constant. Then uniformly for any $2/3 \leqslant q\leqslant 1$ and $\theta \gg \frac{1}{\sqrt{\log_2 x}}$, we have
        \begin{equation}
            \begin{aligned}
                 & \int_{|t|\leqslant 1} \mathbb{E}\big[ \mathbb{1}_{L(0)} 
 \big|F_x(4V/\log x)\big|^2 \mathbb{1}_{L(t)} \big|F_x(it +4V/\log x)\big|^2 \big] {\rm d}t
 \\ & \ll {\rm e}^{2\min\{\sqrt{\log_2 x}, 1/(1-q) +\frac{\theta}{2}\log_2 x \}}\bigg( \frac{\log x}{V}\bigg)^{2}.
            \end{aligned}
        \end{equation}
    \end{lemma}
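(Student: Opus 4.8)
The plan is to prove this two-point estimate by the decorrelation technique underlying Harper's second-moment argument in \cite[Section~5.2]{Harper}. Write $\sigma:=4V/\log x$. The key structural fact is that $F_x(\sigma)$ and $F_x(it+\sigma)$ are strongly correlated only over primes below the branch point $p^\ast=p^\ast(t)\asymp{\rm e}^{1/|t|}$ and essentially independent above it. So I would fix $t$ with $0<|t|\leqslant1$, let $m^\ast=m^\ast(t)\asymp\log(1/|t|)$ be the index of the increment $I_{u_m}$ carrying $p^\ast$, and factor
$$
F_x(s)=F_{\leqslant p^\ast}(s)\,F_{>p^\ast}(s),\qquad F_{\leqslant p^\ast}(s):=\prod_{p\leqslant p^\ast}\Big(1-\frac{f(p)}{p^{1/2+s}}\Big)^{-1}.
$$
The tail $F_{>p^\ast}$ is independent of $\{f(p):p\leqslant p^\ast\}$, hence of $F_{\leqslant p^\ast}$ and of the portions of $L(0),L(t)$ that constrain only the first $m^\ast$ increments.

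For the tail, I would condition on $\{f(p):p\leqslant p^\ast\}$ and apply Lemma~\ref{lemma_Xu}: since $p^\ast\geqslant{\rm e}^{1/|t|}$, \eqref{eq_Xu2} gives
$$
\mathbb{E}\Big[\big|F_{>p^\ast}(\sigma)\big|^2\big|F_{>p^\ast}(it+\sigma)\big|^2\,\Big|\,\{f(p):p\leqslant p^\ast\}\Big]\asymp\exp\Big(\sum_{p^\ast<p\leqslant x}\frac{2}{p^{1+2\sigma}}\Big)\asymp\big(\max\{|t|\log x/V,\,1\}\big)^{2},
$$
which is $\ll(\log x/V)^2$ for every such $t$ and decays like $|t|^2$ as $|t|\downarrow0$.

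For the head, I would use the ceiling from $L(t)$ to bound $|F_{\leqslant p^\ast}(it+\sigma)|^2\leqslant{\rm e}^{2a_{m^\ast}(q,x)}$, where $a_{m^\ast}(q,x)=m^\ast+a(q,x)+2\log m^\ast\asymp\log(1/|t|)+a(q,x)$, and handle the remaining $\mathbb{1}_{L(0)}|F_{\leqslant p^\ast}(\sigma)|^2$ by a $\widetilde{\mathbb{P}}$-type change of measure along the density proportional to $|F_{\leqslant p^\ast}(\sigma)|^2$: this writes $\mathbb{E}[\mathbb{1}_{L(0)}|F_{\leqslant p^\ast}(\sigma)|^2]=\mathbb{E}[|F_{\leqslant p^\ast}(\sigma)|^2]\,\widehat{\mathbb{P}}[L(0)]$ with $\mathbb{E}[|F_{\leqslant p^\ast}(\sigma)|^2]\asymp\min\{1/|t|,\log x/V\}$ by Mertens, and — after the discretisation $t\mapsto v_m(t)$ — reduces $\widehat{\mathbb{P}}[L(0)]$, via Lemmas~\ref{Key_3_2} and \ref{Key3_1}, to the probability that a Gaussian random walk of unit drift stays below a barrier of shape $a(q,x)+2\log j+O(1)$ for $m^\ast$ steps, which by Lemma~\ref{gaussian_approximation} is $\asymp\min\{1,a(q,x)/\sqrt{m^\ast}\}$. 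Combining the ceiling factor, the tail and head Euler moments (whose powers of $|t|$ cancel those of ${\rm e}^{2a_{m^\ast}(q,x)}$), and this ballot probability, and integrating over $|t|\leqslant1$, produces $\ll{\rm e}^{2a(q,x)}(\log x/V)^2$ up to a power of $\log_2 x$. Since $2a(q,x)\leqslant2\min\{\sqrt{\log_2 x},\,1/(1-q)+\tfrac{\theta}{2}\log_2 x\}$ (indeed $\tfrac{\theta}{4}\log_2 x\leqslant\tfrac{\theta}{2}\log_2 x$) this gives the claimed bound: the gap between the $\tfrac{\theta}{4}\log_2 x$ inside $a(q,x)$ and the $\tfrac{\theta}{2}\log_2 x$ permitted in the conclusion supplies — and here the hypothesis $\theta\gg1/\sqrt{\log_2 x}$ is used — the slack to absorb that power of $\log_2 x$ and the other lower-order terms, while the same hypothesis ensures $R\asymp\log_2 x$ so that Lemma~\ref{gaussian_approximation} is applied with $n\asymp\log_2 x$.

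The main obstacle is the head estimate: making ``the two twisted barrier walks coincide up to the branch point'' rigorous through the discretisation, so that Lemmas~\ref{Key3_1} and \ref{Key_3_2} genuinely control the relevant joint/conditional structure, and then organising the bookkeeping so that the powers of $|t|$ coming from the barrier ceiling, the two Euler moments and the ballot probability assemble into an integral over $|t|\leqslant1$ of exactly the stated size. A subsidiary, more routine, point is to verify that the smoothing and the splitting at $p^\ast$ introduce only negligible errors, as in the sum-to-integral passages of Section~4.
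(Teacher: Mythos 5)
Your overall skeleton is the right one, but it is worth noting that the paper does not reprove this estimate at all: its proof is a one-line citation of \cite[Key Proposition 5]{Harper}, observing that Harper's argument is unchanged when the barrier parameter is enlarged from $\min\{\sqrt{\log_2 x},\tfrac{1}{1-q}\}$ to $a(q,x)=\min\{\sqrt{\log_2 x},\tfrac{1}{1-q}+\tfrac{\theta}{4}\log_2 x\}$, since that parameter only enters multiplicatively through ${\rm e}^{2a(q,x)}$. You instead set out to reconstruct Harper's proof, and your outline (split $F_x$ at the branch point $p^\ast\approx{\rm e}^{1/|t|}$, use independence together with Lemma \ref{lemma_Xu} for the decorrelated tail, use the barrier ceiling from $L(t)$ for the common part, then integrate in $t$) is indeed his strategy. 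The genuine gap is the step you yourself flag as ``the main obstacle'': as assembled, the bound does not close. Writing $\sigma=4V/\log x$, if you bound $\mathbb{1}_{L(t)}|F_{\leqslant p^\ast}(it+\sigma)|^2$ by the ceiling ${\rm e}^{2a_{m^\ast}(q,x)}\asymp |t|^{-2}{\rm e}^{2a(q,x)}(\log(1/|t|))^4$, bound $\mathbb{E}\big[\mathbb{1}_{L(0)}|F_{\leqslant p^\ast}(\sigma)|^2\big]\asymp |t|^{-1}\min\{1,a(q,x)/\sqrt{m^\ast}\}$, and bound the tail by $\asymp(|t|\log x/V)^2$, the powers of $|t|$ do \emph{not} cancel: a factor $|t|^{-1}$ (times logarithms) survives, and $\int_{|t|\leqslant 1}$ then costs an extra power of $\log_2 x$. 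Your proposed remedy — absorbing such losses in the gap between $\tfrac{\theta}{4}\log_2 x$ inside $a(q,x)$ and the $\tfrac{\theta}{2}\log_2 x$ permitted in the conclusion, using $\theta\gg 1/\sqrt{\log_2 x}$ — fails exactly when $\tfrac{1}{1-q}+\tfrac{\theta}{4}\log_2 x\geqslant\sqrt{\log_2 x}$, for then both minima equal $\sqrt{\log_2 x}$ and there is no slack at all. In Harper's proof the missing decay in $|t|$ is extracted from the barrier constraints \emph{beyond} the branch point, where the two conditioned walks (at $0$ and at $t$) evolve with decorrelated increments; that is precisely the part your sketch leaves open.

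Two further cautions. The reduction of the tilted probability of $L(0)$ ``via Lemmas \ref{Key_3_2} and \ref{Key3_1}'' is not available off the shelf: those lemmas concern the measure $\widetilde{\mathbb{P}}_t$, tilted by the full Euler product over $p\leqslant x^{1/{\rm e}}$, and are stated only for $1/\sqrt{\log_2 x}\leqslant|t|\leqslant\delta^5$, whereas you need a head-only tilt at the point $0$; this is adaptable, but it has to be redone, including the discretisation $t\mapsto v_m(t)$ for the point $t$ rather than $0$. Given that the only change relative to Harper is the enlarged $a(q,x)$, the efficient (and intended) route is to quote \cite[Key Proposition 5]{Harper} and check that its proof goes through verbatim with the new barrier, rather than to rebuild the two-point estimate from scratch.
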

    \begin{proof}
        Is a direct conclusion from \cite[Key Proposition 5]{Harper}, where  $ \min\{\sqrt{\log_2 x}, \frac{1}{1-q} \}$ is replaced by $\min\{\sqrt{\log_2 x}, \frac{1}{1-q}+ \frac{\theta}{4}\log_2 x \} $.
    \end{proof}
    
\begin{proof}[Proof of Proposition \ref{prop002}.] 
We start with the Steinhaus case.
    By using the fact that $f(n)n^{it}$ has the same law as $f(n)$, we have then
    $$
    \begin{aligned}
        & \mathbb{E}\bigg[ \bigg(\frac{1}{\delta} \int_{\mathcal{S}} \big|F_x(it +4V/\log x)\big|^2 {\rm d}t\bigg)^2 \bigg]
        \\  & = \mathbb{E}\bigg[ \frac{1}{\delta^2} \int_{-\delta}^{\delta} \mathbb{1}_{L(t_1)} \big|F_x(it_1 +4V/\log x)\big|^2 {\rm d}t_1 \int_{-\delta}^{\delta} \mathbb{1}_{L(t_2)} \big|F_x(it_2 +4V/\log x)\big|^2 {\rm d}t_2 \bigg]
        \\ & =  \frac{1}{\delta} \int_{-2\delta}^{2\delta} \mathbb{E}\big[ \mathbb{1}_{L(0)} 
 \big|F_x(4V/\log x)\big|^2 \mathbb{1}_{L(t)} \big|F_x(it +4V/\log x)\big|^2 \big] {\rm d}t.
    \end{aligned}
    $$
    Here, we have made a change of variables $(t=t_1-t_2)$.
    We have for $|t|>1$
    $$
    \begin{aligned}
        & \mathbb{E}\big[ \mathbb{1}_{L(0)} 
 \big|F_x(4V/\log x)\big|^2 \mathbb{1}_{L(t)} \big|F_x(it +4V/\log x)\big|^2 \big] 
 \\ & \leqslant\mathbb{E}\big[ 
 \big|F_x(4V/\log x)\big|^2 \big|F_x(it +4V/\log x)\big|^2 \big] 
 \\ & \ll  \exp\bigg\{ \sum_{p \leqslant x} \frac{2+2\cos(t\log p)}{p^{1+8V/\log x}}  \bigg\} \ll \exp\bigg\{  \sum_{{\rm e}^{1/|t|} < p \leqslant x} \frac{2+2\cos(t\log p)}{p^{1+8V/\log x}} \bigg\}.
    \end{aligned}
    $$
    By applying Lemma \ref{lemma_Xu}, we get uniformly for every $|t|>1$.
    $$
    \begin{aligned}
        \exp\bigg\{  \sum_{{\rm e}^{1/|t|} < p \leqslant x} \frac{2+2\cos(t\log p)}{p^{1+8V/\log x}} \bigg\} \ll \bigg(\frac{\log x}{V}\bigg)^2.
    \end{aligned}
    $$
    Thus
    $$
    \frac{1}{\delta} \int_{1<|t|\leqslant 2\delta} \mathbb{E}\big[ \mathbb{1}_{L(0)} 
 \big|F_x(4V/\log x)\big|^2 \mathbb{1}_{L(t)} \big|F_x(it +4V/\log x)\big|^2 \big] {\rm d}t \ll\bigg(\frac{\log x}{V}\bigg)^2. 
    $$
    Now let's deal with the case where $|t|\leqslant 1$. By applying Lemma \ref{lemma_harper_01}, we have uniformly for all large $x$ and $2/3 \leqslant q\leqslant 1$ and $V$ a large constant
    $$
    \begin{aligned}
            & \frac{1}{\delta} \int_{|t|\leqslant 1} \mathbb{E}\big[ \mathbb{1}_{L(0)} 
 \big|F_x(4V/\log x)\big|^2 \mathbb{1}_{L(t)} \big|F_x(it +4V/\log x)\big|^2 \big] {\rm d}t
 \\ & \ll \frac{1}{\delta}{\rm e}^{2\min\{\sqrt{\log_2 x}, 1/(1-q) +\frac{\theta}{2}\log_2 x \}}\bigg( \frac{\log x}{V}\bigg)^{2}.
    \end{aligned}
    $$
    The term ${\rm e}^{2\min\{\sqrt{\log_2 x}, 1/(1-q) +\frac{\theta}{2}\log_2 x \}}$ will disappear when raised to the power of $1-q$, in fact $(1-q)\sqrt{\log_2 x} \ll 1$ thus  ${\rm e}^{2\min\{\sqrt{(1-q)\log_2 x}, 1 +\frac{\theta}{2}(1-q)\log_2 x \}} \ll 1$.
    We have then
    $$
    \bigg(\mathbb{E}\bigg[ \bigg(\frac{1}{\delta} \int_{\mathcal{S}} \big|F_x(it +4V/\log x)\big|^2 {\rm d}t\bigg)^2 \bigg]\bigg)^{1-q} \ll \bigg(\frac{\log x}{V}\bigg)^{2(1-q)}.
    $$
The proof of the Rademacher cases is quite similar to the Steinhaus case, with only a slight difference. The Rademacher case is a bit more complicated in terms of notation because it lacks the convenient property of "translation invariance in law" found in the Euler products. With minor modifications, the Rademacher lower bound follows simply by imitating the arguments from the Steinhaus case.
\end{proof}

\section{The case of character sums}
In this section, we will prove the Theorem \ref{theoreme_principal_3}.
\subsection{Preliminaries }
In this subsection, we state some useful lemmas. 
If $W(\chi)$ any function, then we define  $\mathbb{E}^{char}[W]:= \frac{1}{r-1}\sum_{\chi \!\!\!\! \mod r} W(\chi).$
We borrow from Harper this useful following lemma.
\begin{lemma}\label{Approcimation_lemma_harper}
    Let $n \in \mathbb{N}$ be large number, and $\lambda>0$ be small. There exist functions $g:\mathbb{R} \to \mathbb{R}$ and $g_{N+1} : \mathbb{R} \to \mathbb{R}$ which are depend on $\lambda$ such that, if we define $g_j(x):=g(x-j)$ for all integers $|j|\leqslant N$, we have the following properties
    \begin{enumerate}
        \item $\sum_{|j|\leqslant N}g_j(x)+g_{N+1}(x)=1$ for all $x\in \mathbb{R}$;
        \item $g(x)\geqslant 0 $ for all $x\in \mathbb{R}$, and $g (x)\leqslant \lambda$ whenever $|x|>1$;
        \item $g_{N+1}(x)\geqslant 0 $ for all $x\in \mathbb{R}$, and $g_{N+1}(x) \leqslant \lambda$ whenever $|x|\leqslant N$;
        \item for all $\ell \in \mathbb{N}$ for all $x \in \mathbb{R}$, we have the derivative estimate $\big| \frac{{\rm d}^{\ell}}{{\rm d}x^{\ell}} g(x)\big|\leqslant \frac{1}{\pi (\ell +1)}\big( \frac{2 \pi }{\lambda}\big)^{\ell +1}$.
    \end{enumerate}
\end{lemma}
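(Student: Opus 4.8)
The plan is to produce $g$ by an explicit smoothing construction and then to \emph{define} $g_{N+1}$ to be exactly the deficit, so that property (1) becomes a tautology and the real content is the list of quantitative estimates (2)--(4). Let $\psi_\lambda(x):=\frac1\lambda\big(\tfrac{\sin(\pi x/\lambda)}{\pi x/\lambda}\big)^2$ be the rescaled Fej\'er kernel: it is non-negative, has mass $\int\psi_\lambda=1$, obeys the pointwise bound $\psi_\lambda(u)\le \lambda/(\pi u)^2$, and its Fourier transform $\widehat{\psi_\lambda}(\xi)=\max\{0,\,1-\lambda|\xi|\}$ is continuous, supported in $[-1/\lambda,1/\lambda]$, and bounded by $1$. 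I would then set
$$
g:=\mathbb{1}_{[-1/2,1/2]}*\psi_\lambda,\qquad g_j(x):=g(x-j)\ \ (|j|\le N),\qquad g_{N+1}(x):=1-\sum_{|j|\le N}g(x-j),
$$
so that property (1) holds by construction.

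For property (4) I would note that $\widehat g=\widehat{\mathbb{1}_{[-1/2,1/2]}}\cdot\widehat{\psi_\lambda}$ is continuous, supported in $[-1/\lambda,1/\lambda]$, and bounded by $1$ in modulus (since $|\widehat{\mathbb{1}_{[-1/2,1/2]}}(\xi)|=|\tfrac{\sin\pi\xi}{\pi\xi}|\le1$), hence $g$ is smooth and Fourier inversion gives $|g^{(\ell)}(x)|\le(2\pi)^\ell\int_{-1/\lambda}^{1/\lambda}|\xi|^\ell\,{\rm d}\xi=\frac{2(2\pi)^\ell}{\ell+1}\lambda^{-(\ell+1)}=\frac{1}{\pi(\ell+1)}\big(\tfrac{2\pi}{\lambda}\big)^{\ell+1}$, which is precisely the claimed bound. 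The non-negativity halves of (2) and (3) are immediate: $g\ge0$ as a convolution of non-negative functions, and then $g_{N+1}=\sum_{|j|>N}g(\cdot-j)\ge0$ \emph{provided} one knows the exact partition-of-unity identity $\sum_{j\in\mathbb{Z}}g(x-j)=1$. That identity is Poisson summation, $\sum_j g(x-j)=\sum_{n\in\mathbb{Z}}\widehat g(n)\,e^{2\pi i n x}$, together with the fact that $\widehat g(n)=\widehat{\mathbb{1}_{[-1/2,1/2]}}(n)\,\widehat{\psi_\lambda}(n)=0$ for every non-zero integer $n$ (the sinc factor vanishes there), while $\widehat g(0)=1$.

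The remaining work is the decay in (2) and (3), which is elementary. Writing $g(x)=\int_{-1/2}^{1/2}\psi_\lambda(x-t)\,{\rm d}t$ and using $\psi_\lambda(u)\le\lambda/(\pi u)^2$ gives, for $|x|\ge1$, $g(x)\le\frac{\lambda}{\pi^2}\int_{-1/2}^{1/2}\frac{{\rm d}t}{(x-t)^2}=\frac{\lambda}{\pi^2(x^2-1/4)}\le\frac{4\lambda}{3\pi^2}<\lambda$, which is (2). For (3), if $|x|\le N$ then every $j$ with $|j|\ge N+1$ satisfies $|x\mp j|\ge|j|-N\ge1$, so $g_{N+1}(x)=\sum_{j\ge N+1}\big(g(x-j)+g(x+j)\big)\le\sum_{k\ge1}\frac{8\lambda}{3\pi^2 k^2}=\frac{4\lambda}{9}<\lambda$.

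I do not expect a genuine obstacle: the lemma is a soft construction, and the single point that must be got right is the \emph{choice of bandwidth} of the mollifier --- taking $\widehat{\psi_\lambda}$ supported exactly in $[-1/\lambda,1/\lambda]$ is what makes the constant in (4) come out as stated, while the $1/x^2$ decay of the Fej\'er kernel is comfortably enough for the $\le\lambda$ estimates in (2) and (3). The one step that deserves a careful word is the justification of Poisson summation (equivalently, of the identity $\sum_j g(x-j)=1$), on which both (1) and the positivity in (3) rest; it is legitimate here because $g$ is continuous with $g(x)=O(\lambda/x^2)$ and $\widehat g$ is continuous with compact support.
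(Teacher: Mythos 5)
Your construction is correct, and it is in substance the same as the source the paper relies on: the paper does not prove this lemma at all, it simply cites Harper's Approximation Result 1, and Harper's construction there is precisely this kind of smoothing of a unit-length indicator by a Fej\'er-type kernel of bandwidth $1/\lambda$, with $g_{N+1}$ defined as the deficit so that the partition identity is automatic. Your writeup checks everything that needs checking: the Fourier-side computation gives exactly the stated constant $\frac{1}{\pi(\ell+1)}\big(\frac{2\pi}{\lambda}\big)^{\ell+1}$ in (4) (since $\widehat g$ is supported in $[-1/\lambda,1/\lambda]$ and bounded by $1$), the bound $\psi_\lambda(u)\leqslant \lambda/(\pi u)^2$ yields $g(x)\leqslant \frac{4\lambda}{3\pi^2}<\lambda$ for $|x|\geqslant 1$ and, summing over the shifts $|j|\geqslant N+1$, $g_{N+1}(x)\leqslant \frac{4\lambda}{9}<\lambda$ for $|x|\leqslant N$, and the one delicate point — the identity $\sum_{j\in\mathbb{Z}}g(x-j)=1$, which underlies both (1) written with $g_{N+1}\geqslant 0$ and the positivity in (3) — is correctly reduced to Poisson summation, legitimate here because $g$ is continuous with quadratic decay and $\widehat g$ is continuous with compact support and vanishes at the nonzero integers through the $\mathrm{sinc}$ factor. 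So your proposal is a valid self-contained proof of the lemma; it differs from the paper only in that the paper outsources the argument to the reference rather than reproducing it.
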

\begin{proof}
    See the proof of \cite[Approximation Result 1]{Harper4}.
\end{proof}
\noindent The following lemma, attributed to Harper, serves as the link connecting characters and random multiplicative functions.
\begin{lemma}\label{proposition-harper_approx_character}
    Let the function $g_j$, with associated parameters $N, \lambda,$ be as in Lemma \ref{Approcimation_lemma_harper}. Suppose that $x\geqslant 1$, and let $A$ a subset of integers numbers in $[1,x+y]$, and let $(c(n))_{n\in A}$ be any complex number having absolute values $\leqslant 1$. Furthermore, let $P$ be large, and let $Y\in \mathbb{N}$ be such that $(x+y)P^{400(Y/\lambda)^2\log(N\log P)}<r$. Let $f$ denote a Steinhaus multiplicative function.
    Then for any indices $-N \leqslant j(1),j(2), ... ,j(Y)\leqslant N+1$, and for any sequences $(a_1(p))_{p \leqslant P}$, $(a_1(p^2))_{p \leqslant P}$,..., $(a_Y(p))_{p \leqslant P}$, $(a_Y(p^2))_{p \leqslant P}$ of complex numbers having absolute values $\leqslant 1$, we have
    $$
    \begin{aligned}
            & \mathbb{E}^{char} \Bigg[\prod_{i=1}^{Y}g_{j(i)}\bigg( \mathcal{R} e\bigg( \sum_{p\leqslant P} \frac{a_i(p) \chi (p) }{\sqrt{p}}+\frac{a_i(p^2) \chi (p^2) }{p}\bigg) \bigg)\bigg|\sum_{n \in A} c(n)\chi(n)\bigg|^2\Bigg]
            \\ &= \mathbb{E}\Bigg[\prod_{i=1}^{Y}g_{j(i)}\bigg( \mathcal{R}e\bigg( \sum_{p\leqslant P} \frac{a_i(p)f(p) }{\sqrt{p}}+\frac{a_i(p^2) f (p^2) }{p}\bigg) \bigg)\bigg|\sum_{n \in A} c(n)f(n)\bigg|^2\Bigg] + O\bigg(\frac{|A|}{(N\log P)^{Y(1/\lambda)^2}}\bigg).
    \end{aligned}
    $$
\end{lemma}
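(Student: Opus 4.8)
The plan is to write each side of the lemma as a (near‑)finite Dirichlet bilinear form and exploit that $\mathbb{E}^{char}$ and $\mathbb{E}$ agree on such forms once their lengths are below $r$. The engine is the identity $\mathbb{E}^{char}\big[\chi(a)\overline{\chi(b)}\big]=\mathbb{1}_{a\equiv b\,(r)}=\mathbb{1}_{a=b}=\mathbb{E}\big[f(a)\overline{f(b)}\big]$, valid for all integers $1\le a,b<r$: the first equality is orthogonality of Dirichlet characters (every $a<r$ is coprime to the prime $r$), and $0<a,b<r$ promotes the congruence to equality. Consequently, if $\Psi_1(\chi)=\sum_{a<r}\psi_1(a)\chi(a)$ and $\Psi_2(\chi)=\sum_{b<r}\psi_2(b)\chi(b)$ are Dirichlet polynomials supported on integers below $r$, then $\mathbb{E}^{char}\big[\Psi_1(\chi)\overline{\Psi_2(\chi)}\big]=\mathbb{E}\big[\Psi_1(f)\overline{\Psi_2(f)}\big]$ exactly. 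All the work is in writing the two sides of the lemma in this shape modulo an error of the claimed size.

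Write $\Phi(\chi):=\sum_{n\in A}c(n)\chi(n)$ and $E_i(\chi):=\sum_{p\le P}\big(a_i(p)\chi(p)/\sqrt p+a_i(p^2)\chi(p^2)/p\big)$, so that the left‑hand side is $\mathbb{E}^{char}\big[\prod_{i=1}^Y g_{j(i)}(\mathcal{R}e\,E_i(\chi))\,|\Phi(\chi)|^2\big]$, a non‑negative quantity. First I would use the derivative bounds of Lemma~\ref{Approcimation_lemma_harper}(4) to replace each $g_{j(i)}$ by a trigonometric polynomial $\widetilde g_{j(i)}$ with frequencies $\ll\lambda^{-1}$ which is uniformly within $\varepsilon$ of $g_{j(i)}$ on the bounded range of $\mathcal{R}e\,E_i$ (the same range for $\chi$ and for $f$, since $|\chi(p)|,|f(p)|\le 1$), supplemented by a truncation confining the potentially large value $\mathcal{R}e\,E_i$ to a polylogarithmic window (on the complement the product is $\ll\lambda^{-(Y-1)}\cdot\lambda$ and the contribution is killed by a high even moment of $\mathcal{R}e\,E_i$, itself of Dirichlet‑polynomial type of modest length). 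Telescoping the $Y$ replacements, using $|g_{j(i)}|,|\widetilde g_{j(i)}|\ll\lambda^{-1}$ together with $\mathbb{E}^{char}[|\Phi(\chi)|^2]=\sum_n|c(n)|^2\le|A|$ and the identical identity on the $f$‑side, introduces an error $\ll_Y\lambda^{-O(Y)}\varepsilon\,|A|$, which is $\le|A|/(N\log P)^{Y(1/\lambda)^2}$ for a suitable $\varepsilon$, at the cost of only logarithmic factors in the degree.

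After this, $\prod_i\widetilde g_{j(i)}(\mathcal{R}e\,E_i(\chi))\,|\Phi(\chi)|^2$ is a bounded sum over frequency vectors $\vec k$ of $\big(\prod_i\widehat g_{j(i)}(k_i)\big)\,\Psi_{1,\vec\alpha}(\chi)\overline{\Psi_{2,\vec\alpha}(\chi)}$ with $|\alpha_i|\ll\lambda^{-1}$, where (using $\mathcal{R}e\,w=(w+\overline w)/2$) $\Psi_{1,\vec\alpha}(\chi)=e^{\pi i\sum_i\alpha_i E_i(\chi)}\Phi(\chi)$ and $\Psi_{2,\vec\alpha}(\chi)=e^{-\pi i\sum_i\alpha_i E_i(\chi)}\Phi(\chi)$; each exponential is an Euler product over $p\le P$, hence a Dirichlet series over $P$‑smooth integers $\ell$ with square‑summable coefficients, $\sum_\ell|h_{\vec\alpha}(\ell)|^2/\ell\ll\exp\!\big(O(|\vec\alpha|^2\log_2 P)\big)=(\log P)^{O((Y/\lambda)^2)}$. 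By Rankin's trick the mass with $\ell>P^{D}$ is $\ll P^{-\sigma D}(\log P)^{O((Y/\lambda)^2)}$ for $\sigma\asymp1/\log P$, so truncating each exponential at $\ell\le P^{D}$ with $D\le 400(Y/\lambda)^2\log(N\log P)$ keeps the truncated $\Psi_{1,\vec\alpha}^{\le D}(\chi),\Psi_{2,\vec\alpha}^{\le D}(\chi)$ Dirichlet polynomials of length $\le(x+y)P^{D}<r$ and keeps the truncation error $\le|A|/(N\log P)^{Y(1/\lambda)^2}$. The ``main'' part, where both exponentials are truncated, is then a sum of terms $\mathbb{E}^{char}\big[\Psi_{1,\vec\alpha}^{\le D}(\chi)\overline{\Psi_{2,\vec\alpha}^{\le D}(\chi)}\big]=\mathbb{E}\big[\Psi_{1,\vec\alpha}^{\le D}(f)\overline{\Psi_{2,\vec\alpha}^{\le D}(f)}\big]$ by the first paragraph; the cross‑terms and tail‑terms are estimated in $L^2$ --- on the $\chi$‑side by the mean‑value bound $\mathbb{E}^{char}[|\sum_\ell b(\ell)\chi(\ell)|^2]\ll\sum_\ell|b(\ell)|^2$ (valid since all $\ell$ occurring can be arranged to lie below $r$, the far tail being negligible as $r$ is enormous), on the $f$‑side directly --- and are controlled by the same Rankin tail. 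Performing the identical expansion on the right‑hand side of the lemma, with the identical coefficients $\widehat g_{j(i)}(k_i)$ and $h_{\vec\alpha}(\ell)$, the main parts cancel term by term and the lemma follows with total error $O\big(|A|/(N\log P)^{Y(1/\lambda)^2}\big)$.

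The structural argument is clean; the difficulty is entirely quantitative bookkeeping. One must choose the trigonometric degree, the polylogarithmic value‑window, the Rankin exponent $\sigma$, and the truncation length $P^{D}$ so that the errors from the $Y$ trigonometric replacements, from confining $\mathcal{R}e\,E_i$, and from truncating the $2Y$ exponential Dirichlet series, once weighted by $|\Phi|^2$ and summed over all $\vec k$, stay below $|A|/(N\log P)^{Y(1/\lambda)^2}$, while simultaneously $D$ and the lengths of every Dirichlet polynomial actually produced stay within the budget $400(Y/\lambda)^2\log(N\log P)$ that the hypothesis $(x+y)P^{400(Y/\lambda)^2\log(N\log P)}<r$ guarantees. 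Checking that the constant $400$ absorbs the squaring in Rankin's trick, the $Y$‑fold product, and the passage from second‑moment control to control of the full expectations via Cauchy--Schwarz --- keeping $|\Phi|^2$ intact throughout so as never to need a fourth moment of $\Phi$ --- is the one genuinely delicate point; the remaining manipulations of Euler products and character orthogonality are routine, as carried out in Harper's work.
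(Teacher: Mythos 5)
The paper does not actually prove this lemma: its entire proof is the single line that the statement is a direct consequence of \cite[Proposition 1]{Harper_charac} (the statement here is that proposition essentially verbatim, with $x$ replaced by $x+y$), so the real comparison is between your sketch and Harper's original argument for that proposition. Your outline follows essentially the same route as Harper's: the exact matching of bilinear forms via $\frac{1}{r-1}\sum_{\chi}\chi(a)\overline{\chi(b)}=\mathbb{1}_{a=b}=\mathbb{E}\big[f(a)\overline{f(b)}\big]$ for $1\leqslant a,b<r$; the reduction of the weights $g_{j(i)}\big(\mathcal{R}e\,E_i\big)$ to exponentials; the factorisation $e\big(\alpha\,\mathcal{R}e\,E_i\big)|\Phi|^2=\Psi_1\overline{\Psi_2}$ so that only second moments of $\Phi$ are ever needed; the truncation of the exponential Euler products at length $P^{D}$ with $D\ll(Y/\lambda)^2\log(N\log P)$ by a Rankin-type argument; and mean-value bounds for the tails. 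So the strategy is sound and is the one the cited source carries out.

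Two points are genuinely glossed over, and they are precisely what the paper avoids by citing Harper. First, your opening move replaces each $g_{j(i)}$ by a trigonometric polynomial of bandwidth $O(1/\lambda)$ lying uniformly within $\varepsilon$ of $g_{j(i)}$, where you later need $\varepsilon$ as small as roughly $(N\log P)^{-Y(1/\lambda)^2}$; with frequencies confined to $O(1/\lambda)$ such accuracy is not available for a general smooth function, and is only legitimate here because property 4 of Lemma \ref{Approcimation_lemma_harper} forces $g$ to be entire of exponential type $2\pi/\lambda$, hence band-limited by Paley--Wiener. Harper sidesteps this by using the exact Fourier representation of $g$ and expanding $e\big(\xi\,\mathcal{R}e\,E_i\big)$ for each frequency $\xi$ under the integral; you should either do the same or explicitly control the discretisation error, since as written this step is not justified by the stated properties of $g$. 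Second, the verification that the exponent $400(Y/\lambda)^2\log(N\log P)$ and the error $O\big(|A|/(N\log P)^{Y(1/\lambda)^2}\big)$ survive the $Y$-fold products, the Cauchy--Schwarz steps and the Rankin tails is the actual content of Harper's proof, and you leave it unchecked by your own admission; as a self-contained argument the proposal is therefore a correct plan but an incomplete proof, whereas the paper's proof is complete (trivially) because it is a citation whose hypotheses are verified elsewhere in Section 5.
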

\begin{proof}
    This Lemma is a direct consequence from \cite[Proposition 1]{Harper_charac}.
\end{proof}
\subsection{Proof of Theorem \ref{theoreme_principal_3}.}
Let $x+y\leqslant r$. We define $L=\min\{x+y,r/(x+y) \}+3$ and $P=\exp((\log L)^{1/10})$. We set $M:= \lfloor(\log P)^{1.02}\rfloor$ and $N:= \log_2 P$. For each $|k|\leqslant M$ we define 
\begin{equation}\label{quantity_1}
    S_k(\chi):= \mathcal{R }\bigg( \sum_{p\leqslant P} \frac{\chi (p)}{p^{\frac{1}{2}+\frac{ik}{(\log P)^{1.01}}}} + \frac{\chi (p^2)}{p^{1+i\frac{2ik}{(\log P)^{1.01}}}}\bigg).
\end{equation}
When transitioning to the random multiplicative function, $S_k(\chi) $ can be approximately expressed as $\exp\big(F_P(it)\big)$. This expression encapsulates sufficient information essential for controlling the sum in the form of $\sum_{\substack{ d \leqslant x+y }}^{P,x} \big| M^{+}\big(\frac{x}{d},\frac{y}{d},P;f\big) \big|^2$.
\subsubsection{The conditioning argument}\label{subsection_condition_argument}
 We start first with some simplifications. By H\"{o}lder's inequality and by applying \cite[theorem 5.1]{Hiledbrand_tenenbaum}, we have
$$
\begin{aligned}
    \mathbb{E}^{char}\bigg[\big|M^{+}(x,y,x^{\frac{1}{\log_2 x}};\chi)  \big|^{2q} \bigg]& \leqslant \Bigg( \mathbb{E}^{char} \bigg[\big|M^{+}(x,y,x^{\frac{1}{\log_2 x}};\chi)  \big|^2\bigg] \Bigg)^{q}
    \\ & \leqslant \Bigg( \sum_{\substack{x<n\leqslant x+y\\ P(n) \leqslant x^{\frac{1}{\log_2 x}}}} 1 \Bigg)^q \ll \bigg(\frac{y}{\log x}\bigg)^q.
\end{aligned}
$$
Following the step as Lemma \ref{p_n_less_x_power_-(K+1)}, we prove easily that
$$
\mathbb{E}^{char}\bigg[\big| M^{-}(x,y,x;\chi) \big|^{2q} \bigg] \ll \bigg(\frac{y}{\log x}\bigg)^q.
$$
Thus $\mathbb{E}^{char}\bigg[\big|M^{+}(x,y,x^{\frac{1}{\log_2 x}};\chi)  \big|^{2q}\bigg] $ and $ \mathbb{E}^{char}\bigg[\big| M^{-}(x,y,x;\chi) \big|^{2q}\bigg]$ 
which can be neglected compared to 
\begin{equation}\label{dominant_term}
    y \min\bigg\{ 1, \theta \sqrt{\log_2 x} + \frac{1}{(1-q)\sqrt{\log_2 L}}\bigg\}.
\end{equation}
We will focus on studying $ \sum_{\substack{x<n\leqslant x+y\\ x^{\frac{1}{\log_2 x}} < P(n) \leqslant x }} \chi(n)$.
Following Harper \cite{Harper_charac}, we define $\sigma(\textbf{j}):= \mathbb{E}^{char} \big[ \sum_{i=-M}^{M} g_{j_i}(S_i(\chi)) \big]$ and $$\mathbb{E}^{\textbf{j}}\big[W\big]:= \frac{1}{\sigma(\textbf{j})} \mathbb{E}^{char} \big[ W \sum_{i=-M}^{M} g_{j_i}(S_i(\chi))\big]$$ for all function $W(\chi)$. Recall from Lemma \ref{Approcimation_lemma_harper} that for every $i$, $ \sum_{j=-N}^{N} g_j(S_i(\chi))=1$. We have
$$
\begin{aligned}
    & \mathbb{E}^{char}\Bigg[\bigg| \sum_{\substack{x<n\leqslant x+y\\ x^{\frac{1}{\log_2 x}} < P(n) \leqslant x }} \chi(n) \bigg|^{2q} \Bigg]  = \mathbb{E}^{char} \Bigg[\prod_{i=-M}^{M} \bigg( \sum_{j=-N}^{N} g_j(S_i(\chi)\bigg) \bigg| \sum_{\substack{x<n\leqslant x+y\\ x^{\frac{1}{\log_2 x}} < P(n) \leqslant x }} \chi(n) \bigg|^{2q} \Bigg]
    \\ & = \sum_{-N \leqslant j_{-M},...,j_0,...,j_M \leqslant N+1} \mathbb{E}^{char} \Bigg[ \prod_{i=-M}^{M} g_{j_i}(S_i(\chi)) \bigg| \sum_{\substack{x<n\leqslant x+y\\ x^{\frac{1}{\log_2 x}} < P(n) \leqslant x }} \chi(n) \bigg|^{2q} \Bigg]
    \\ &= \sum_{\textbf{j}} \sigma(\textbf{j})\mathbb{E}^{\textbf{j}}\Bigg[ \bigg| \sum_{\substack{x<n\leqslant x+y\\ x^{\frac{1}{\log_2 x}} < P(n) \leqslant x }} \chi(n) \bigg|^{2q} \Bigg]
\end{aligned}
$$
where $\textbf{j}$ is the $2M+1$-tuple $ (j_{-M},...,j_0,...,j_M)$ such that $-N \leqslant j_{-M},...,j_0,...,j_M \leqslant N+1$. 
We have by H\"{o}lder's inequality 
$$
\mathbb{E}^{char}\Bigg[\bigg| \sum_{\substack{x<n\leqslant x+y\\ x^{\frac{1}{\log_2 x}} < P(n) \leqslant x }} \chi(n) \bigg|^{2q} \Bigg] \leqslant \sum_{\textbf{j}} \sigma(\textbf{j})\Bigg(\mathbb{E}^{\textbf{j}}\Bigg[ \bigg| \sum_{\substack{x<n\leqslant x+y\\ x^{\frac{1}{\log_2 x}} < P(n) \leqslant x }} \chi(n) \bigg|^{2} \Bigg]\Bigg)^q.
$$
The quantity $\mathbb{E}^{\textbf{j}}$ represents our character sum version of a conditional expectation. Specifically, when considering the constant function 1, as defined, $\mathbb{E}^{\textbf{j}}[1]=1$ for all choices of the vector $\textbf{j}$. Thus, all the information we need is encapsulated in $\mathbb{E}^{\textbf{j}}$ itself.
\subsubsection{Passing to the random multiplicative functions}
In the following section, we will establish a connection between characters and random multiplicative functions, enabling us to leverage some properties associated to multiplicative chaos.
Let $f$ be a Steinhaus random multiplicative function. We define, for all random variable $W$, the following expectation $$\mathbb{E}^{\textbf{j}, rand}[W]:= \frac{1}{\sigma^{rand}(\textbf{j})} \mathbb{E}\bigg[W \prod_{i=-M}^M g_{j_i}(S_i(f)) \bigg]$$ 
where 
\begin{equation}\label{sigma_j_rand}
    \sigma^{rand}(\textbf{j}):= \mathbb{E}\bigg[ \prod_{i=-M}^M g_{j_{i}}\big(S_i(f)\big)\bigg]
\end{equation}
(the analogue $\sigma(\textbf{j})$ define earlier in the subsection \ref{subsection_condition_argument}). By Lemma \ref{proposition-harper_approx_character}, with $Y=2M+1$, and for $N $ and $\lambda $ such that 
$ (x+y)P^{400\big((2M+1)/\lambda\big)^2\log(N\log P)}<r$, we get 
$$
\begin{aligned}
    \mathbb{E}^{\textbf{j}}\bigg[\bigg| \sum_{\substack{x<n\leqslant x+y\\ x^{\frac{1}{\log_2 x}} < P(n) \leqslant x }} \chi(n) \bigg|^{2} \bigg]= & \frac{\sigma^{rand}(\textbf{j})}{\sigma(\textbf{j})} 
 \mathbb{E}^{\textbf{j}, rand} \bigg[  \bigg| \sum_{\substack{x<n\leqslant x+y\\ x^{\frac{1}{\log_2 x}} < P(n) \leqslant x }} f(n) \bigg|^2\bigg] \\ & +O\bigg( \frac{1}{\sigma(\textbf{j})}\frac{y}{(N\log P)^{(2M+1)/\lambda^2}}\bigg).
\end{aligned}
$$
For the big Oh term, when we multiply with $\sigma(\textbf{j})$ followed by a sum over $\textbf{j}$, we get 
$$
\sum_{\textbf{j}} \frac{y}{(N\log P)^{(2M+1)/\lambda^2}} =\frac{y(2N+1)^{2M+1}}{(N\log P)^{(2M+1)/\lambda^2}}  \ll \frac{y}{\log P}
$$
which can be neglected compared to 
\eqref{dominant_term}.
By using Lemma \ref{Approcimation_lemma_harper} in the case where $\chi=1$ (see also \cite[Proposition 2]{Harper_charac}), 
we have 
$\sigma^{rand}(\textbf{j})\ll \sigma(\textbf{j}) +  \frac{1}{(N\log P)^{(2M+1)/\lambda^2}} $. Therefore we have
$$
\begin{aligned}
    & \sum_{\textbf{j}} \sigma(\textbf{j})\bigg( \frac{\sigma ^{rand}(\textbf{j})}{\sigma(\textbf{j})} \mathbb{E}^{\textbf{j}, rand}  \bigg[   \bigg| \sum_{\substack{x<n\leqslant x+y\\ x^{\frac{1}{\log_2 x}} < P(n) \leqslant x }} f(n) \bigg|^2 \bigg] \bigg)^q
    \\ & \ll \sum_{\textbf{j}} \sigma^{rand}(\textbf{j})\bigg(  \mathbb{E}^{\textbf{j}, rand} \bigg[   \bigg| \sum_{\substack{x<n\leqslant x+y\\ x^{\frac{1}{\log_2 x}} < P(n) \leqslant x }} f(n) \bigg|^2 \bigg] \bigg)^q 
    \\ & + \bigg( \frac{1}{(N\log P)^{(2M+1)/\lambda^2}} \bigg)^{1-q} \sum_{\textbf{j}} \bigg(  \mathbb{E} \bigg[ \prod_{i=-M}^M g_{j_i} (S_i(f))  \bigg| \sum_{\substack{x<n\leqslant x+y\\ x^{\frac{1}{\log_2 x}} < P(n) \leqslant x }} f(n) \bigg|^2 \bigg] \bigg)^q.
\end{aligned}
$$
\noindent We finish this subsection by proving that the last term above is also negligible compared to \eqref{dominant_term}.
By H\"{o}lder's inequality to the sum $\textbf{j}$, we have the last term above is
$$
\begin{aligned}
    & \bigg( \frac{1}{(N\log P)^{(2M+1)/\lambda^2}} \bigg)^{1-q} ((2N+2)^{2M+1})^{1-q} \bigg( \sum_{\textbf{j}} \mathbb{E}^{\textbf{j}, rand} \bigg[  \bigg| \sum_{\substack{x<n\leqslant x+y\\ x^{\frac{1}{\log_2 x}} < P(n) \leqslant x }} f(n) \bigg|^2 \bigg] \bigg)^q 
    \\ & = \bigg( \frac{(2N+2)^{2M+1}}{(N\log P)^{(2M+1)/\lambda^2}} \bigg)^{1-q}  \bigg(  \mathbb{E} \bigg[  \bigg| \sum_{\substack{x<n\leqslant x+y\\ x^{\frac{1}{\log_2 x}} < P(n) \leqslant x }} f(n) \bigg|^2 \bigg] \bigg)^q 
    \\ & \ll y^q \bigg( \frac{(2N+2)^{2M+1}}{(N\log P)^{(2M+1)/\lambda^2}} \bigg)^{1-q}\ll y^q{\rm e}^{-(1-q)\log_2 P}
\end{aligned}
$$
which can be neglected compared to \eqref{dominant_term}.
From now on, the problem of characters is probabilistic problem. In other words, it suffices to prove
$$
\sum_{\textbf{j}} \sigma^{rand}(\textbf{j}) \bigg( \mathbb{E}^{\textbf{j}, rand}\bigg[ \bigg| \!\!\!\!\!\!\!\!\sum_{\substack{x<n\leqslant x+y\\ x^{\frac{1}{\log_2 x}} < P(n) \leqslant x }} \!\!\!\!\!\!\!\!f(n) \bigg|^2 \bigg]\bigg)^q \ll \bigg( y \min\bigg\{ 1, \theta \sqrt{\log_2 P} + \frac{1}{(1-q)\sqrt{\log_2 P}}\bigg\} \bigg)^q.
$$
\subsubsection{Euler product}
In this section, we will convert the expectation $\mathbb{E}^{\textbf{j}, rand}\bigg[ \bigg| \sum_{\substack{x<n\leqslant x+y\\ x^{\frac{1}{\log_2 x}} < P(n) \leqslant x }} f(n) \bigg|^2 \bigg] $ in form of Euler product.
This conversion will enable us to apply multiplicative chaos techniques.
Recall the expectation conditioning on $(f(p))_{p\leqslant P}$ is given by $\mathbb{E}_P [.]= \mathbb{E}\big[\,\,\,.\,\,\,|\, (f(p))_{p\leqslant P}\big]$. Since the product $\prod_{i=-M}^M g_{j_i}(S_i(f))$ in the definition of $\mathbb{E}^{\textbf{j},rand}$ depends only on $(f(p))_{p\leqslant P}$, thus we have 
$$
\begin{aligned}
   \mathbb{E}^{\textbf{j}, rand}\bigg[ \bigg| \sum_{\substack{x<n\leqslant x+y\\ x^{\frac{1}{\log_2 x}} < P(n) \leqslant x }} f(n) \bigg|^2 \bigg] & = \frac{1}{\sigma^{rand}(\textbf{j})} \mathbb{E}\bigg[ \mathbb{E}_{P}\bigg[ \prod_{i=-M}^M g_{j_i}(S_i(f)) \bigg| \sum_{\substack{x<n\leqslant x+y\\ x^{\frac{1}{\log_2 x}} < P(n) \leqslant x }} f(n) \bigg|^2 \bigg] \bigg] 
   \\ &= \frac{1}{\sigma^{rand}(\textbf{j})}\mathbb{E}\bigg[  \prod_{i=-M}^M g_{j_i}(S_i(f))\!\!\!\!\!\!\!\! \sum_{\substack{ d  \leqslant x+y \\ P(d)> x^{\frac{1}{\log_2 x}} }}^{P,x}\!\!\!\!\!\bigg| M^{+}\bigg(\frac{x}{d},\frac{y}{d},P;f\bigg) \bigg|^2 \bigg]
   \\ & = \mathbb{E}^{\textbf{j}, rand}\bigg[  \sum_{\substack{ d  \leqslant x+y \\ P(d)> x^{\frac{1}{\log_2 x}} }}^{P,x} \bigg| M^{+}\bigg(\frac{x}{d},\frac{y}{d},P;f\bigg) \bigg|^2 \bigg].
\end{aligned}
$$
Recall that $X={\rm e}^{\sqrt{x}}$. For any $\textbf{j}$, we have
$$
\begin{aligned}
     &   \mathbb{E}^{\textbf{j}, rand}\bigg[  \sum_{\substack{ d  \leqslant x+y \\ P(d)> x^{\frac{1}{\log_2 x}} }}^{P,x}\bigg| M^{+}\bigg(\frac{x}{d},\frac{y}{d},P;f\bigg) \bigg|^2 \bigg]
    \\ &\ll   \mathbb{E}^{\textbf{j}, rand}\bigg[  \sum_{\substack{ d  \leqslant x+y \\ P(d)> x^{\frac{1}{\log_2 x}} }}^{P,x} \frac{X}{d} \int_{d}^{d(1+1/X)}\bigg| M^{+}\bigg(\frac{x}{t},\frac{y}{t},P;f\bigg) \bigg|^2   {\rm d}t\bigg] 
    \\ &  +\mathbb{E}^{\textbf{j}, rand}\bigg[  \sum_{\substack{ d  \leqslant x+y \\ P(d)> x^{\frac{1}{\log_2 x}} }}^{P,x} \frac{X}{d} \int_{d}^{d(1+1/X)}\bigg| \sum_{\substack{\frac{x+y}{t}<n\leqslant \frac{x+y}{d}\\ P(n) \leqslant P \\ }} f(n) \bigg|^2+ \bigg| \sum_{\substack{\frac{x}{t}<n\leqslant \frac{x}{d}\\ P(n) \leqslant P \\ }} f(n) \bigg|^2  {\rm d}t   \bigg].
\end{aligned}
$$
We will deal first with the last term.
Since $\sum_{\textbf{j}}\sigma^{rand}(\textbf{j})=1$, we have by H\"{o}lder's inequality and by following the same techniques as in the proof of inequality \eqref{inequality_0101}, we have
$$
\begin{aligned}
    & \sum_{\textbf{j}}\sigma^{rand}(\textbf{j})\bigg( \mathbb{E}^{\textbf{j}, rand}\bigg[  \sum_{\substack{ d  \leqslant x+y \\ P(d)> x^{\frac{1}{\log_2 x}} }}^{P,x} \frac{X}{d} \int_{d}^{d(1+1/X)}\bigg| \sum_{\substack{\frac{x+y}{t}<n\leqslant \frac{x+y}{d}\\ P(n) \leqslant P \\ }} f(n) \bigg|^2 {\rm d}t \bigg]  \bigg)^q 
    \\& \leqslant \bigg(\sum_{\textbf{j}}\sigma^{rand}(\textbf{j})\mathbb{E}^{\textbf{j}, rand}\bigg[  \sum_{\substack{ d  \leqslant x+y \\ P(d)> x^{\frac{1}{\log_2 x}} }}^{P,x} \frac{X}{d} \int_{d}^{d(1+1/X)}\bigg| \sum_{\substack{\frac{x+y}{t}<n\leqslant \frac{x+y}{d}\\ P(n) \leqslant P \\ }} f(n) \bigg|^2 {\rm d}t \bigg]  \bigg)^q 
    \\ & = \bigg( \mathbb{E}\bigg[  \sum_{\substack{ d  \leqslant x+y \\ P(d)> x^{\frac{1}{\log_2 x}} }}^{P,x} \frac{X}{d} \int_{d}^{d(1+1/X)}\bigg| \sum_{\substack{\frac{x+y}{t}<n\leqslant \frac{x+y}{d}\\ P(n) \leqslant P \\ }} f(n) \bigg|^2 {\rm d}t \bigg] \bigg)^q.
    \end{aligned}
$$
Now by using the fact that $$\mathbb{E}\bigg[  \bigg| \sum_{\substack{\frac{x+y}{t}<n\leqslant \frac{x+y}{d}\\ P(n) \leqslant P \\ }} f(n) \bigg|^2 \bigg]\leqslant \sum_{\substack{\frac{x+y}{t}<n\leqslant \frac{x+y}{d}\\ P(n) \leqslant P \\ }} 1,$$ we get the above expression is 
$$
\begin{aligned}
    \\ & \leqslant \bigg(  \sum_{\substack{ d  \leqslant x+y \\ P(d)> x^{\frac{1}{\log_2 x}} }}^{P,x} \frac{X}{d} \int_{d}^{d(1+1/X)} \sum_{\substack{\frac{x+y}{d(1+1/X)}<n\leqslant \frac{x+y}{d}\\ P(n) \leqslant P  }} 1 {\rm d}t  \bigg)^q
    \\ & \leqslant  \bigg(    \sum_{\substack{\frac{x+y}{(1+1/X)}<n\leqslant x+y }} 1   \bigg)^q \ll \bigg(    \frac{x}{X}  \bigg)^q \ll \bigg(  \frac{y}{\sqrt{\log_2 x}}   \bigg)^q .
\end{aligned}
$$
We prove similarly that 
$$
\sum_{\textbf{j}}\sigma^{rand}(\textbf{j})\bigg( \mathbb{E}^{\textbf{j}, rand}\bigg[  \sum_{\substack{ d  \leqslant x+y \\ P(d)> x^{\frac{1}{\log_2 x}} }}^{P,x} \frac{X}{d} \int_{d}^{d(1+1/X)}\bigg| \sum_{\substack{\frac{x}{t}<n\leqslant \frac{x}{d}\\ P(n) \leqslant P \\ }} f(n) \bigg|^2 {\rm d}t \bigg]  \bigg)^q \ll \bigg(  \frac{y}{\sqrt{\log_2 x}}   \bigg)^q.
$$
which is more than enough for us.
Now let study
$$ \mathbb{E}^{\textbf{j}, rand}\bigg[  \sum_{\substack{ d  \leqslant x+y \\ P(d)> x^{\frac{1}{\log_2 x}} }}^{P,x} \frac{X}{d} \int_{d}^{d(1+1/X)}\bigg| M^{+}\bigg(\frac{x}{t},\frac{y}{t},P;f\bigg) \bigg|^2 {\rm d}t \bigg]  .$$
By using \eqref{Harper_sieve} followed with a change of variable $z=x/t$, we have
$$
\begin{aligned}
   &  \!\!\!\!\!\!\!\!\!\!\!\! \!\!\!\!\!\!\!\!\!\!\!\!\mathbb{E}^{\textbf{j}, rand}\bigg[  \sum_{\substack{ d  \leqslant x+y \\ P(d)> x^{\frac{1}{\log_2 x}} }}^{P,x} \frac{X}{d} \int_{d}^{d(1+1/X)}\bigg| M^{+}\bigg(\frac{x}{t},\frac{y}{t},P;f\bigg) \bigg|^2   {\rm d}t  \bigg] 
    \\ &  \!\!\!\!\!\!\!\!\!\!\!\!\!\!\!\!\!\!\!\!\!\!\!\!\leqslant  \mathbb{E}^{\textbf{j}, rand}\bigg[  \int_{x^{1/\log_2 x}}^{(x+y)(1+1/X)}\sum_{\substack{ t/(1+1/X) \leqslant d \leqslant t  }}^{P,x} \frac{X}{d} \bigg| M^{+}\bigg(\frac{x}{t},\frac{y}{t},P;f\bigg) \bigg|^2 {\rm d}t   \bigg] 
    \\ &  \!\!\!\!\!\!\!\!\!\!\!\!\!\!\!\!\!\!\!\!\!\!\!\! \ll  \mathbb{E}^{\textbf{j}, rand}\bigg[ \frac{x}{\log P} \int^{x^{1-1/\log_2 x}}_{\frac{1}{(1+1/\delta)(1+1/X)}}  \big|M^{+}\big(z,z/\delta,P;f\big) \big|^2  \frac{{\rm d}z}{z^2} \bigg] .
\end{aligned}
$$
By applying Parseval's identity (Lemma \ref{parseval}), we have
$$
\begin{aligned}
    & \int^{x^{1-1/\log_2 x}}_{\frac{1}{(1+1/\delta)(1+1/X)}}  \big|M^{+}\big(z,z/\delta,P;f\big) \big|^2 \ll \int_{-\infty}^{+ \infty} \frac{\big| (1+1/\delta)^{1/2+it} -1 \big|^2}{\big|1/2 +it  \big|^2} \big|F_P(it ) \big|^2{\rm d}t.  
\end{aligned}
$$
By using the same decomposition as in Lemma \ref{sum_over_K}, we have $ | (1+1/\delta)^{1/2+it} -1| \ll \frac{|1/2+it|}{\delta}$ for $|t|\leqslant \delta $. For $|t| > \delta $, we give the trivial bound $ | (1+1/\delta)^{1/2+it} -1| \ll 1$. We have then
$$
\begin{aligned}
    & \int_{-\infty}^{+ \infty} \frac{\big| (1+1/\delta)^{1/2+it} -1 \big|^2}{\big|1/2 +it  \big|^2} \big|F_P(it ) \big|^2{\rm d}t
    \\ & \ll \frac{1}{\delta^2}\int_{-\delta}^{\delta}  \big|F_P(it ) \big|^2{\rm d}t + \int_{\delta\leqslant |t|\leqslant \delta^5} \frac{\big|F_P(it ) \big|^2}{|1/2+it|}{\rm d}t
    + \int_{|t|> \delta^5} \frac{\big|F_P(it ) \big|^2}{|1/2+it|}{\rm d}t.
\end{aligned}
$$
for the second and last term, we continue with some additive simplifications
$$
\int_{\delta\leqslant |t|\leqslant \delta^5} \frac{\big|F_P(it ) \big|^2}{|1/2+it|}{\rm d}t\ll \sum_{\delta <|n| \leqslant \delta^5} \frac{1}{n^2} \big|F_P(it+in ) \big|^2 {\rm d}t
$$
and
$$
\begin{aligned}
    \int_{|t|> \delta^5} \frac{\big|F_P(it ) \big|^2}{|1/2+it|}{\rm d}t &\ll \sum_{|n|>\delta^5}\frac{1}{n^2}\int_{-1/2}^{1/2} \big|F_P(it+in ) \big|^2 {\rm d}t 
\\ & \ll \frac{1}{\delta} \sum_{n \in \mathbb{N}} \frac{1}{(|n|+1)^{8/5}}\int_{-1/2}^{1/2} \big|F_P(it+in ) \big|^2 {\rm d}t.
\end{aligned}
$$
For the sake of simplicity, we set as in section \ref{Intermediate upper bound}
$$
B_1(\delta,P;f):= \sum_{\textbf{j}}\sigma^{rand}(\textbf{j})\bigg( \frac{y}{\log P}\mathbb{E}^{\textbf{j}, rand}\bigg[   \frac{1}{\delta}\int_{-\delta}^{\delta}  \big|F_P(it ) \big|^2{\rm d}t  \bigg] \bigg)^q,
$$
$$
B_2(\delta,P;f):= \sum_{\textbf{j}}\sigma^{rand}(\textbf{j})\bigg( \frac{x}{\log P}\mathbb{E}^{\textbf{j}, rand}\bigg[   \sum_{\delta <|n| \leqslant \delta^5} \frac{1}{n^2} \int_{-1/2}^{1/2} \big|F_P(it+in ) \big|^2 {\rm d}t \bigg] \bigg)^q,
$$
and
$$
B_3(\delta,P;f):=  \sum_{n \in \mathbb{N}} \frac{1}{(|n|+1)^{16/15}}\sum_{\textbf{j}}\sigma^{rand}(\textbf{j})\bigg( \frac{y}{\log P}\mathbb{E}^{\textbf{j}, rand}\bigg[  \int_{-1/2}^{1/2} \big|F_P(it+in ) \big|^2 {\rm d}t \bigg] \bigg)^q.
$$
One can see easily that
$$\begin{aligned}
    & \sum_{\textbf{j}}\sigma^{rand}(\textbf{j})\bigg( \mathbb{E}^{\textbf{j}, rand}\bigg[ \frac{x}{\log P} \int^{x^{1-1/\log_2 x}}_{\frac{1}{(1+1/\delta)(1+1/X)}}  \bigg| \sum_{\substack{z<n\leqslant z(1+1/\delta)\\ P(n) \leqslant P \\ }} f(n) \bigg|^2  \frac{{\rm d}z}{z^2} \bigg]  \bigg)^q 
    \\& \ll B_1(\delta,P;f)+B_2(\delta,P;f)+B_3(\delta,P).
\end{aligned}
$$
    By using \cite[inequality (3.5)]{Harper_charac}, we have
    \begin{equation}\label{bound_b_3}
        B_3(\delta,P;f) \ll \bigg(\frac{y}{1+(1-q)\sqrt{\log_2 P}}\bigg)^q.
    \end{equation}
    Now the problem is reduced to prove uniformly for any $ 2/3 \leqslant q\leqslant 1$ and any $x+y\leqslant r$, we have
    $$
    B_1(\delta,P;f) + B_2(\delta,P;f) \ll \bigg(  y \min\bigg\{ 1, \theta \sqrt{\log_2 P} + \frac{1}{(1-q)\sqrt{\log_2 P}}\bigg\} \bigg)^q.
    $$
\subsubsection{Multiplicative Chaos and conclusion}
In this section, we will apply the multiplicative chaos result of previous sections.
We start by some adjustment to $ B_1(\delta,P;f)$ and  $B_2(\delta,P;f)$. Following Harper \cite{Harper_charac}, we need first to move from the integral to discretised version. For the sake of readability, we let $K_P:= (\log P)^{1.01}$ We have
$$
\begin{aligned}
    &B_1(\delta,P;f) \ll  \sum_{\textbf{j}}\sigma^{rand}(\textbf{j})\bigg( \frac{y}{\delta\log P}\mathbb{E}^{\textbf{j}, rand}\bigg[ \sum_{|k|\leqslant \delta K_P } \!\!\!\!  \int_{-\frac{1}{2 K_P }}^{\frac{1}{2 K_P }}  \bigg|F_P\bigg(\frac{ik}{ K_P }+it \bigg) 
     \!-\! F_P\bigg(\frac{ik}{ K_P }\bigg)\bigg|^2{\rm d}t  \bigg] \bigg)^q 
    \\&+ \sum_{\textbf{j}}\sigma^{rand}(\textbf{j})\bigg( \frac{y}{\delta\log P}\mathbb{E}^{\textbf{j}, rand}\bigg[ \sum_{|k|\leqslant \delta K_P }   \int_{-\frac{1}{2 K_P }}^{\frac{1}{2 K_P }}  \bigg| F_P\bigg(\frac{ik}{ K_P }\bigg)\bigg|^2{\rm d}t  \bigg] \bigg)^q .
\end{aligned}
$$
By applying H\"{o}lder's inequality applied to sum over $\textbf{j}$ once again, we have
$$
\begin{aligned}
    & \sum_{\textbf{j}}\sigma^{rand}(\textbf{j})\bigg( \frac{y}{\log P}\mathbb{E}^{\textbf{j}, rand}\bigg[ \sum_{|k|\leqslant \delta K_P }   \frac{1}{\delta}\int_{-\frac{1}{2 K_P }}^{\frac{1}{2 K_P }}  \bigg|F_P\bigg(\frac{ik}{ K_P }+it \bigg) 
     - F_P\bigg(\frac{ik}{ K_P }\bigg)\bigg|^2{\rm d}t  \bigg] \bigg)^q 
    \\ & \leqslant \bigg(\frac{y}{\log P}  \bigg)^q \bigg( \sum_{|k|\leqslant \delta K_P }   \frac{1}{\delta}\int_{-\frac{1}{2 K_P }}^{\frac{1}{2 K_P }} \mathbb{E}\bigg[  \bigg|F_P\bigg(\frac{ik}{ K_P }+it \bigg)
     - F_P\bigg(\frac{ik}{ K_P }\bigg)\bigg|^2 \bigg] {\rm d}t  \bigg)^q .
\end{aligned}
$$
We have by Lemma \ref{perturabtion}
$$
\begin{aligned}
    & \sum_{|k|\leqslant \delta K_P }   \frac{1}{\delta}\int_{-\frac{1}{2 K_P }}^{\frac{1}{2 K_P }}\mathbb{E}\bigg[  \bigg|F_P\bigg(\frac{ik}{ K_P }+it \bigg) - F_P\bigg(\frac{ik}{ K_P }\bigg)\bigg|^2  \bigg] 
    \ll  (\log P)^{0.99} .
\end{aligned}
$$
Thus
$$
\begin{aligned}
    B_1(\delta,P;f) \ll &  \sum_{\textbf{j}}\sigma^{rand}(\textbf{j})\bigg( \frac{y}{\log P}\mathbb{E}^{\textbf{j}, rand}\bigg[ \sum_{|k|\leqslant \delta K_P }   \frac{1}{\delta}\frac{1}{ K_P }  \bigg| F_P\bigg(\frac{ik}{ K_P }\bigg)\bigg|^2  \bigg] \bigg)^q
     \\ & + \frac{y^q}{(\log P)^{0.01q}}.
\end{aligned}
$$
Following the same steps, we have for $B_2(\delta,P;f) $
$$
\begin{aligned}
     B_2(\delta,P;f) \ll&  \sum_{\textbf{j}}\sigma^{rand}(\textbf{j})\bigg( \frac{y}{\log P}\mathbb{E}^{\textbf{j}, rand}\bigg[ \sum_{  \delta K_P <|k|\leqslant \delta^5 K_P  }  \frac{1}{n^2}\frac{1}{ K_P }  \bigg| F_P\bigg(\frac{ik}{ K_P }\bigg)\bigg|^2 \bigg] \bigg)^q
     \\ & + \frac{y^q}{(\log P)^{0.01q}}.
\end{aligned}
$$
Let define
$$
\widetilde{B}_1(\delta,P;f):= \sum_{\textbf{j}}\sigma^{rand}(\textbf{j})\bigg( \frac{y}{\log P}\mathbb{E}^{\textbf{j}, rand}\bigg[ \sum_{|k|\leqslant \delta K_P }   \frac{1}{\delta}\frac{1}{ K_P }  \bigg| F_P\bigg(\frac{ik}{ K_P }\bigg)\bigg|^2  \bigg] \bigg)^q
$$
and
$$
\begin{aligned}
    \widetilde{B}_2(\delta,P;f):= \sum_{\textbf{j}}\sigma^{rand}(\textbf{j})\bigg( \frac{y}{\log P}\mathbb{E}^{\textbf{j}, rand}\bigg[ \sum_{  \delta K_P <|k|\leqslant \delta^{5}  K_P } &  \frac{1}{n^2}\frac{1}{ K_P }  \bigg| F_P\bigg(\frac{ik}{ K_P }\bigg)\bigg|^2  \bigg] \bigg)^q.
\end{aligned}
$$
The goal for the rest of the paper is to prove the following bounds
\begin{equation}\label{B_1}
      \widetilde{B}_1(\delta,P;f)  \ll \bigg(  y \min\bigg\{ 1, \theta \sqrt{\log_2 P} + \frac{1}{(1-q)\sqrt{\log_2 P}}\bigg\} \bigg)^q
\end{equation}
and 
\begin{equation}\label{B_2}
      \widetilde{B}_2(\delta,P;f)  \ll \bigg(  y \min\bigg\{ 1, \theta \sqrt{\log_2 P} + \frac{1}{(1-q)\sqrt{\log_2 P}}\bigg\} \bigg)^q.
\end{equation}
We will focus on proving \eqref{B_1}, for \eqref{B_2} can be proven similarly. Following Harper \cite[section 3.5]{Harper_charac}, we define the following set 
$$
\mathcal{T}:=\bigg\{ k\in \mathbb{Z}: \bigg| F_P\bigg(\frac{ik}{ K_P }\bigg)\bigg| >(\log P)^{1.1} \text{ or } \bigg| F_P\bigg(\frac{ik}{ K_P }\bigg)\bigg| \leqslant\frac{1}{(\log P)^{1.1}} \bigg\}.
$$
By dividing the sum in $\widetilde{B}_1(\delta,P;f)$ over $k$ to $k\in \mathcal{T}$, $k \notin \mathcal{T}$, $|S_k(f)-j_k|>1$, and $|S_k(f)-j_k|\leqslant1$ and by H\"{o}lder's inequality for the sum over $k$ restricted to $k \in \mathcal{T} $, we have
$$
 \widetilde{B}_1(\delta,P;f) \leqslant  \widetilde{B}_1^{(a)}(\delta,P;f)+ \widetilde{B}_1^{(b)}(\delta,P;f)+\widetilde{B}_1^{(c)}(\delta,P;f)
$$
Where
$$
\widetilde{B}_1^{(a)}(\delta,P;f) := \sum_{\textbf{j}}\sigma^{rand}(\textbf{j})\bigg( \frac{y}{\delta(\log P)^{2.01}}\mathbb{E}^{\textbf{j}, rand}\bigg[ \sum_{\substack{|k|\leqslant \delta K_P \\ k \notin \mathcal{T}} } \mathbb{1}_{|S_k(f)-j_k|\leqslant 1} \bigg| F_P\bigg(\frac{ik}{ K_P }\bigg)\bigg|^2  \bigg] \bigg)^q,
$$
$$
\widetilde{B}_1^{(b)}(\delta,P;f) := \sum_{\textbf{j}}\sigma^{rand}(\textbf{j})\bigg( \frac{y}{\delta(\log P)^{2.01}}\mathbb{E}^{\textbf{j}, rand}\bigg[ \sum_{\substack{|k|\leqslant \delta K_P \\ k \notin \mathcal{T}} } \mathbb{1}_{|S_k(f)-j_k|>1} \bigg| F_P\bigg(\frac{ik}{ K_P }\bigg)\bigg|^2  \bigg] \bigg)^q
$$
and
$$
\widetilde{B}_1^{(c)}(\delta,P;f):=  \bigg(\frac{y}{\log P}\bigg)^q\bigg( \frac{1}{ K_P } \sum_{\substack{|k|\leqslant \delta K_P  }}   \frac{1}{\delta} \mathbb{E}\bigg[  \mathbb{1}_{k \in \mathcal{T}} \bigg| F_P\bigg(\frac{ik}{ K_P }\bigg)\bigg|^2  \bigg] \bigg)^q.
$$
Arguing as Harper \cite[section 3.5]{Harper_charac}, for $\widetilde{B}_1^{(c)}(\delta,P;f)$, we have 
$$
\begin{aligned}
    \mathbb{E}\bigg[  \mathbb{1}_{k \in \mathcal{T}} \bigg| F_P\bigg(\frac{ik}{ K_P }\bigg)\bigg|^2  \bigg] \ll & \big((\log P)^{1.1}\big)^{-0.2}\mathbb{E}\bigg[\bigg| F_P\bigg(\frac{ik}{ K_P }\bigg)\bigg|^{2.2}\bigg] 
     + \bigg(\frac{1}{(\log P)^{1.1}}\bigg)^{2}.
\end{aligned}$$
Using \cite[Euler Product Result 1]{Harper2}, we have $$\mathbb{E}\bigg[\bigg| F_P\bigg(\frac{ik}{ K_P }\bigg)\bigg|^{2.2}\bigg] \ll (\log P)^{1.21}. $$ Thus 
$$
\mathbb{E}\bigg[  \mathbb{1}_{k \in \mathcal{T}} \bigg| F_P\bigg(\frac{ik}{ K_P }\bigg)\bigg|^2  \bigg]\ll \frac{1}{\big((\log P)^{1.1}\big)^{0.2}}\mathbb{E}\bigg[\bigg| F_P\bigg(\frac{ik}{ K_P }\bigg)\bigg|^{2.2}\bigg] \ll (\log P)^{0.99}
$$
which means, the expression $\widetilde{B}_1^{(c)}(\delta,P;f)$ can be neglected. For $\widetilde{B}_1^{(b)}(\delta,P;f)$, by H\"{o}lder's inequality and by rewriting the explicit expression of $\mathbb{E}^{\textbf{j}, rand} $, we have $\widetilde{B}_1^{(b)}(\delta,P;f)$ is less than
$$
\begin{aligned}
\bigg(\frac{y}{\delta(\log P)^{2.01}}\sum_{\substack{|k|\leqslant \delta K_P } } \mathbb{E}\bigg[ \sum_{\textbf{j}} \prod_{i=-M}^{M} g_{j_i}(S_i(f))\mathbb{1}_{k\notin \mathcal{T}}   \mathbb{1}_{|S_k(f)-j_k|>1}\bigg| F_P\bigg(\frac{ik}{ K_P }\bigg)\bigg|^2  \bigg] \bigg)^q.
\end{aligned}
$$
 For $-N\leqslant j_k\leqslant N$ and by using property 2 in Lemma \ref{Approcimation_lemma_harper} we have $ g_{j_k}(S_k(f))\mathbb{1}_{|S_k(f)-j_k|>1}\leqslant~ \lambda$. In addition to that, if $k\notin \mathcal{T}$ then $|S_k(f)|=\log\bigg| F_P\bigg(\frac{ik}{ K_P }\bigg)\bigg|+O(1)\leqslant 1.1\log_2 P +O(1)$. By taking $N\geqslant 1.2\log _2 P$ and for $j_k=N+1$, we have $g_{j_k}(S_k(f))\mathbb{1}_{k\notin \mathcal{T}}\leqslant \lambda$ (by property 3 in Lemma \ref{Approcimation_lemma_harper}). Thus, in any case 
 $$ \prod_{i=-M}^{M} g_{j_i}(S_i(f))\mathbb{1}_{k\notin \mathcal{T}} \mathbb{1}_{|S_k(f)-j_k|>1}\leqslant \lambda \prod_{i\neq k}g_{j_i}(S_i(f)),$$
 and subsequently, upon conducting the summation across all conceivable values of $j_i$ for $i\neq k$ and by using the fact that for every $i$ $ \sum_{-N\leqslant j_i\leqslant N+1}g_{j_i}(S_i(f)) =1$,  we get 
 $$ 
 \begin{aligned}
     \sum_{\textbf{j}} \prod_{i=-M}^{M} g_{j_i}(S_i(f))\mathbb{1}_{k\notin \mathcal{T}} \mathbb{1}_{|S_k(f)-j_k|>1}
     & \leqslant \lambda \sum_{\textbf{j}} \prod_{i\neq k} g_{j_i}(S_i(f)) 
     \\ & = \lambda \sum_{-N \leqslant j_k\leqslant N+1 } 
     \prod_{i\neq k} \sum_{-N\leqslant j_i\leqslant N+1}g_{j_i}(S_i(f)) \\& \ll \lambda N.
 \end{aligned}
 $$ 
 Thus we have
 $$
 \begin{aligned}
  \widetilde{B}_1^{(b)}(\delta,P;f)& \ll  \bigg( \frac{y}{\delta(\log P)^{2.01}}\sum_{\substack{|k|\leqslant \delta K_P } } \lambda N\mathbb{E}\bigg[\bigg| F_P\bigg(\frac{ik}{ K_P }\bigg)\bigg|^2  \bigg] \bigg)^q
 \\ & \ll \bigg(\frac{y}{\log P}\lambda N \sum_{\substack{ n\geqslant 1\\ P(n)\leqslant P}} \frac{1}{n}\bigg)^q \ll \big( y \lambda N \big)^q.
  \end{aligned}
 $$ 
 We take $\lambda \leqslant \frac{1}{N\sqrt{\log_2 P}}$, this gives an acceptable bound.
 The problem now is reduced to prove
 $$
 \begin{aligned}
      \widetilde{B}_1^{(a)}(\delta,P;f) \ll  \bigg(  y \min\bigg\{ 1, \theta \sqrt{\log_2 P} + \frac{1}{(1-q)\sqrt{\log_2 P}}\bigg\} \bigg)^q.
 \end{aligned}
 $$
 Recall that $$\bigg| F_P\bigg(\frac{ik}{ K_P }\bigg)\bigg|=\exp\bigg\{-\mathcal{R}e\sum_{p\leqslant P}  \log\bigg(1-\frac{f(p)}{p^{1/2+\frac{ik}{ K_P }}}\bigg) \bigg\}\asymp \exp\big\{S_{k}(f)\big\}$$ for all $k$. Note that  the only $k$ that contribute to the sum over $k$ are those for which $S_k(f)\in [j(k)-1,j(k)+1]$. Since $k\notin \mathcal{T}$, we have $|S_k(f)|\leqslant 1.1\log_2 P +O(1)$, we have then
 $$
 \begin{aligned}
    \widetilde{B}_1^{(c)}(\delta,P;f) & \ll \sum_{\textbf{j}}\sigma^{rand}(\textbf{j})\bigg( \frac{y}{\delta(\log P)^{2.01}}\mathbb{E}^{\textbf{j}, rand}\bigg[ \!\!\!\!\!\!\!\!\sum_{\substack{|k|\leqslant \delta K_P   \\ |j_k|\leqslant 1.1 \log_2 P +O(1) }} \!\!\!\!\!\!\!\! {\rm e}^{2j_k}  \bigg] \bigg)^q
      \\&= \sum_{\textbf{j}}\sigma^{rand}(\textbf{j})\bigg( \frac{y}{\delta(\log P)^{2.01}} \!\!\!\!\!\!\!\!\sum_{\substack{|k|\leqslant \delta K_P   \\ |j_k|\leqslant 1.1 \log_2 P +O(1) }}\!\!\!\!\!\!\!\!  {\rm e}^{2j_k}\bigg)^q.
 \end{aligned}
 $$
 The last line is because $ \sum_{\substack{|k|\leqslant \delta K_P   \\ |j_k|\leqslant 1.1 \log_2 P +O(1) }}  {\rm e}^{2j_k}$ is deterministic function.  By using the definition of $\sigma^{rand}(\textbf{j})$ in \eqref{sigma_j_rand}, we have
 \begin{equation}\label{sum_over_k_restriction}
 \begin{aligned}
     &\sum_{\textbf{j}}\sigma^{rand}(\textbf{j})\bigg( \frac{y}{\delta(\log P)^{2.01}} \!\!\!\!\!\!\!\!\sum_{\substack{|k|\leqslant \delta K_P   \\ |j_k|\leqslant 1.1 \log_2 P +O(1) }} \!\!\!\!\!\!\!\! {\rm e}^{2j_k} \bigg)^q
     \\ & = \mathbb{E}\Bigg[\sum_{\textbf{j}}\prod_{i=-M}^{M} g_{j_i}(S_i(f))\bigg( \frac{y}{\delta(\log P)^{2.01}}\!\!\!\!\!\!\!\!\sum_{\substack{|k|\leqslant \delta K_P   \\ |j_k|\leqslant 1.1 \log_2 P +O(1) }} \!\!\!\!\!\!\!\! {\rm e}^{2j_k} \bigg)^q\Bigg].
 \end{aligned}
\end{equation}
We divide again the sum above over $k$ to $ |S_k(f)-j_k|\leqslant ~1$ and $ |S_k(f)-j_k|>1$. We deal first with sum over $k$ when 
 $|S_k(f)-j_k|\leqslant ~1$.
Recall that $\sum_{\textbf{j}}\prod_{i=-M}^{M} g_{j_i}(S_i(f)) =~1$, we have
 $$
 \begin{aligned}
     & \mathbb{E}\Bigg[\sum_{\textbf{j}}\prod_{i=-M}^{M} g_{j_i}(S_i(f))\bigg(  \frac{y}{\delta(\log P)^{2.01}} \!\!\!\!\!\!\!\!\sum_{\substack{|k|\leqslant \delta K_P   \\ |j_k|\leqslant 1.1 \log_2 P +O(1) }} \!\!\!\!\!\!\!\!\mathbb{1}_{|S_k(f)-j_k|\leqslant 1} \,\, {\rm e}^{2j_k} \bigg)^q\Bigg]
     \\ & \ll \mathbb{E}\Bigg[\sum_{\textbf{j}}\prod_{i=-M}^{M} g_{j_i}(S_i(f))\bigg( \frac{y}{\delta(\log P)^{2.01}} \!\!\!\!\!\!\!\!\sum_{\substack{|k|\leqslant \delta K_P   \\ |j_k|\leqslant 1.1 \log_2 P +O(1) }} \!\!\!\!\bigg| F_P\bigg(\frac{ik}{ K_P }\bigg)\bigg|^2 \bigg)^q\Bigg]
     \\ & \leqslant \bigg( \frac{y}{\log P}\bigg)^q \mathbb{E}\Bigg[\bigg( \frac{1}{\delta K_P } \sum_{\substack{|k|\leqslant \delta K_P    }} \bigg| F_P\bigg(\frac{ik}{ K_P }\bigg)\bigg|^2 \bigg)^q\Bigg].
 \end{aligned}
 $$
The following lemma gives the desired bound for the above term.
 \begin{lemma}\label{discret_multiplicative_chaos}
Let $f$ be a Steinhaus multiplicative function. Let $\delta \geqslant 1$ and $P$ large, recall that $\delta = \frac{x}{y}=\log^{\theta} x $. We have uniformly for $\theta = o(\frac{1}{\sqrt{\log_2 x}})$ and $2/3 \leqslant q\leqslant 1$
     $$
\begin{aligned}
    & \bigg( \frac{y}{\log P}\bigg)^q \mathbb{E}\Bigg[\bigg( \frac{1}{\delta K_P } \sum_{\substack{|k|\leqslant \delta K_P    }} \bigg| F_P\bigg(\frac{ik}{ K_P }\bigg)\bigg|^2 \bigg)^q\Bigg]
    \\ & \ll \bigg(  y \min\bigg\{ 1, \theta \sqrt{\log_2 P} + \frac{1}{(1-q)\sqrt{\log_2 P}}\bigg\} \bigg)^q.
\end{aligned}
$$
 \end{lemma}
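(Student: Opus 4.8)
The plan is to pass from the discrete average over $k$ to a genuine integral and then to invoke the multiplicative‑chaos upper bound of Section~4.1 with the ambient parameter $x$ replaced by the truncation level $P$. For the first step I use the pointwise bound $|F_P(ik/K_P)|^2\le 2|F_P(ik/K_P+it)|^2+2|F_P(ik/K_P+it)-F_P(ik/K_P)|^2$, valid for every $t$; averaging over $|t|\le 1/(2K_P)$ and summing over $|k|\le\delta K_P$ gives
\begin{equation*}
\frac{1}{\delta K_P}\sum_{|k|\le\delta K_P}\Big|F_P\Big(\frac{ik}{K_P}\Big)\Big|^2\le\frac{2}{\delta}\int_{-2\delta}^{2\delta}|F_P(it)|^2\,{\rm d}t+\frac{2}{\delta}\sum_{|k|\le\delta K_P}\int_{-1/(2K_P)}^{1/(2K_P)}\Big|F_P\Big(\frac{ik}{K_P}+it\Big)-F_P\Big(\frac{ik}{K_P}\Big)\Big|^2{\rm d}t.
\end{equation*}
Raising to the power $q\le 1$ (subadditivity of $u\mapsto u^q$), taking expectations, and using $\mathbb{E}[U^q]\le(\mathbb{E}[U])^q$ together with Lemma~\ref{perturabtion}, the contribution of the last sum is $\ll(y/\log P)^q\big((\log P)^{0.99}\big)^q=y^q(\log P)^{-0.01q}$, which is negligible against \eqref{dominant_term}. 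So it suffices to bound $(y/\log P)^q\,\mathbb{E}\big[\big(\tfrac1\delta\int_{-2\delta}^{2\delta}|F_P(it)|^2{\rm d}t\big)^q\big]$.

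For this I would rerun the proof of Proposition~\ref{prop_harper01} (the case $k=0$) with $x$ replaced by $P$ throughout. The field $F_P$ decomposes into $\asymp\log_2 P\ (\asymp\log_2 L)$ independent increments, so the discretised barrier events \eqref{event_t}--\eqref{deinition_of_c}, the failure estimate of Lemma~\ref{key4}, the tilted‑measure bound of Lemma~\ref{key3}, and the Gaussian‑walk comparison of Lemmas~\ref{Key3_1}--\ref{Key_3_2} all carry over with $\log_2 x\mapsto\log_2 P$ and $R(t)\asymp\log_2 P$; the iteration from the ``Proof of Proposition~\ref{prop_harper01} assuming Lemma~\ref{key3}'' then delivers the bound $\big(\min\{1,\theta\sqrt{\log_2 P}+\tfrac1{(1-q)\sqrt{\log_2 P}}\}\big)^q$. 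Enlarging the integration range from $[-\delta,\delta]$ to $[-2\delta,2\delta]$ is harmless, since in the subcritical regime that the hypotheses put us in the right side of \eqref{eq_chao1} is linear in the length of the interval and the slab $\delta<|t|\le2\delta$ is in any case covered by \eqref{eq_chao2}. This is exactly the path of Harper's character paper \cite[section~3.5]{Harper_charac}, with the extra $\theta$‑dependent term added to the barriers as in our Section~4.1.

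The delicate point — the only thing beyond a routine transcription — is to make the $\theta$‑bookkeeping come out right: one must check that at each barrier level the number of discretised frequencies $v_m(t)$ over which the union bounds of Lemma~\ref{key4} range is only a fixed power of $\delta$ times a polynomial in the level, so that it is absorbed by the term $5\theta\log_2 x=5\log\delta$ in \eqref{deinition_of_c}, and that, tracking carefully how $\log_2 P$, $\log_2 L$ and $\log_2 x$ relate under the standing assumption $\log(x/y)\ll\log\log x$ of Theorem~\ref{theoreme_principal_3}, dividing the barrier height by $\sqrt{R(t)}\asymp\sqrt{\log_2 P}$ reproduces the claimed $\theta\sqrt{\log_2 P}$ (or, when relaxed, the $\theta\sqrt{\log_2 x}$ of Theorem~\ref{theoreme_principal_3}). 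Finally, I note that the hypothesis $\theta=o(1/\sqrt{\log_2 x})$ forces the term $\theta\sqrt{\log_2 P}$ to be $o(1)$, so the genuine content of the lemma is the $\theta$‑free chaos bound $\mathbb{E}\big[\big(\tfrac1{\delta\log P}\int_{-\delta}^{\delta}|F_P(it)|^2{\rm d}t\big)^q\big]\ll\big(\min\{1,\tfrac1{(1-q)\sqrt{\log_2 P}}\}\big)^q$, i.e.\ precisely the $P$‑analogue of the estimate already proved in Section~4.1.
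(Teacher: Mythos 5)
Your route is the paper's own: the printed proof of Lemma \ref{discret_multiplicative_chaos} is a one\,--\,liner citing Lemma \ref{perturabtion} together with the multiplicative chaos bound (the reference ``Proposition \ref{lemma_harper_01}'' there is evidently intended to be Proposition \ref{prop_harper01}), i.e.\ exactly your two steps: convert the discrete average over the points $k/K_P$ into $\frac1\delta\int|F_P(it)|^2\,{\rm d}t$ at the admissible cost $(\log P)^{0.99}$, then run the Section~4.1 chaos argument for $F_P$ with the $\theta$-dependent barriers. Your first two paragraphs carry this out correctly and in more detail than the paper.

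Two cautions, one of which is a genuine error in your write-up. Your closing remark is wrong: $\theta\sqrt{\log_2 P}=o(1)$ does not allow you to drop that term, because $\frac{1}{(1-q)\sqrt{\log_2 P}}$ is also $o(1)$ and can be much smaller (take $q=2/3$ and $\theta\sqrt{\log_2 x}=1/\log_3 x$, say). The lemma is therefore not ``equivalent to the $\theta$-free chaos bound''; that stronger bound should in fact fail once $\theta\log_2 P\gg\frac{1}{1-q}$, since averaging $|F_P|^2$ over a window of length $\delta$ costs roughly a factor $\log\delta$ on the typical size --- which is precisely why the barrier \eqref{deinition_of_c} carries the term $5\theta\log_2 x$ and why the $\theta$ term survives into the statement. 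So the operative part of your proposal is the rerun of Proposition \ref{prop_harper01} with the $\theta$-dependent barriers, not that afterthought. Second, the ``$\theta$-bookkeeping'' you flag is a real pressure point, but it is the paper's as much as yours: the union bound forces the barrier height to contain about $\log\delta=\theta\log_2 x$, and dividing by $\sqrt{R}\asymp\sqrt{\log_2 P}$ yields $\theta\log_2 x/\sqrt{\log_2 P}$, which equals the stated $\theta\sqrt{\log_2 P}$ only when $\log_2 P\asymp\log_2 x$ (equivalently $\log_2 L\asymp\log_2 x$); note that the standing hypothesis $\log(x/y)\ll\log\log x$ of Theorem \ref{theoreme_principal_3} constrains $y$, not $L$, so it does not by itself supply this identification. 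On that point you are aligned with the source's own (tacit) reading, but you should not assert that the hypotheses make it ``come out right'' without the extra assumption relating $\log_2 L$ and $\log_2 x$.
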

 \begin{proof}
Is a direct result from Proposition \ref{lemma_harper_01} and Lemma \ref{perturabtion}.
 \end{proof}
\noindent We conclude the proof by dealing with the sum over $k$ when $|S_k(f) - j_k| > 1$ in \eqref{sum_over_k_restriction}. We have by using the fact that ${\rm e }^{2j_k}\ll (\log P)^{2.2}$ for $|j_k|\leqslant 1.1 \log_2 P +O(1)$ followed by H\"{o}lder's inequality, we get then
\begin{equation}\label{expectation_last_line}
\begin{aligned}
    & \mathbb{E}\Bigg[\sum_{\textbf{j}}\prod_{i=-M}^{M} g_{j_i}(S_i(f))\bigg( \frac{y}{\delta(\log P)^{2.01}} \!\!\!\!\!\!\!\!\sum_{\substack{|k|\leqslant \delta K_P   \\ |j_k|\leqslant 1.1 \log_2 P +O(1) }} \!\!\!\!\!\!\!\!\mathbb{1}_{|S_k(f)-j_k|>1} \,\, {\rm e}^{2j_k}  \bigg)^q\Bigg]
    \\ & \ll \Bigg( \mathbb{E}\bigg[\sum_{\textbf{j}}\prod_{i=-M}^{M} g_{j_i}(S_i(f)) \frac{y}{\delta(\log P)^{2.01}} \!\!\!\!\!\!\!\!\sum_{\substack{|k|\leqslant \delta K_P   \\ |j_k|\leqslant 1.1 \log_2 P +O(1) }} \!\!\!\!\!\!\!\!\mathbb{1}_{|S_k(f)-j_k|>1} \,\, ({\log P)^{2.2}}  \bigg]\bigg)^q.
\end{aligned}
\end{equation}
Finally, by using properties \textit{2} and \textit{3} Lemma \ref{Approcimation_lemma_harper} and since $N \geqslant 1.2\log_2 P$, we have $$g_{j_k}(S_k(f))\mathbb{1}_{|j_k|\leqslant 1.1\log_2 P +O(1)}\mathbb{1}_{|S_{k}(f)-j_k|>1}\leqslant \lambda. $$ we deduce that \eqref{expectation_last_line} is
$$
\begin{aligned}
    & \leqslant \bigg(\frac{y}{\log P} \bigg)^q \Bigg(  \frac{\lambda}{\delta K_P } \sum_{\substack{|k|\leqslant \delta K_P   }}  \,\, ({\log P)^{2.2}} \mathbb{E}\bigg[\sum_{\textbf{j}}\prod_{i\neq k} g_{j_i}(S_i(f)) \bigg]\Bigg)^q
    \\ & \ll \bigg(\frac{y}{\log P} \bigg)^q \Bigg(  \frac{\lambda}{\delta K_P } \sum_{\substack{|k|\leqslant \delta K_P   }}  \,\, ({\log P)^{2.2}} \!\!\!\!\! \sum_{- N \leqslant j_k\leqslant N+1} 1\Bigg)^q \ll y^q \big(  \lambda N (\log P)^{1.2}\big)^q.
\end{aligned}
$$
By taking $\lambda =\frac{1}{N(\log P)^{1.2}\sqrt{\log_2 P}}$, we get the result.
To conclude, we verify finally that $N$ and $\lambda$ satisfy 
\begin{equation}\label{verifying}
    (x+y)P^{400(Y/\lambda )^2\log(N\log P)}<r
\end{equation}
in Lemma \ref{proposition-harper_approx_character}. Indeed recall $Y=2M+1$, $M=\lfloor (\log P)^{1.02}\rfloor$ and $P=\exp((\log L)^{1/10})$ with  $L=\min\{x+y,r/(x+y) \}+3$. We take $N=\lfloor 1.2 \log_2 P \rfloor +1$. It is easy to show that these choices verify \eqref{verifying} for $L$ large enough.
\section*{Acknowledgement}
The author would like to thank his supervisor Régis de la Bretèche for his patient guidance, encouragement and the judicious advices he has provided throughout the work that led to this paper. The author would also thank Adam Harper for his helpful remarks, corrections and useful comments on earlier versions of the paper.

\bibliographystyle{abbrv}
\bibliography{references.bib}

\begin{center}
Université Paris Cité, Sorbonne Université
CNRS,\\
Institut de Mathématiques de Jussieu- Paris Rive Gauche,\\
F-75013 Paris, France\\
E-mail: \author{rachid.caich@imj-prg.fr}
\end{center}
\end{document}